\pdfoutput=1
\documentclass[11pt,oneside]{amsart}
\usepackage[
paper=a4paper,
headsep=20pt,text={136mm,208mm},centering,includehead
]{geometry}
\usepackage{graphicx}
\usepackage[dvipsnames,usenames]{xcolor}
\definecolor{cb1}{RGB}{100,143,255}
\definecolor{cb2}{RGB}{220,38,127}
\definecolor{cb3}{RGB}{255,176,0}
\usepackage[colorlinks=true,urlcolor=ForestGreen,linkcolor=ForestGreen,citecolor=ForestGreen]{hyperref}
\hypersetup{
	linkbordercolor={1 0 0}, 
	citebordercolor={0 1 0} 
}
\usepackage[noabbrev]{cleveref}
\usepackage{amsthm}
\usepackage{cite}
\usepackage{wasysym}
\usepackage{amssymb,mathtools,mathrsfs,stmaryrd,mwe}
\usepackage{bbm}
\usepackage[UKenglish]{babel}

\usepackage[backgroundcolor=green,linecolor=green,bordercolor=green]{todonotes}
\usepackage{enumerate}
\usepackage{caption}
\usepackage[labelfont=up,margin=5pt]{subcaption}
\usepackage{wrapfig}
\usepackage{comment}

\usepackage[section]{placeins}
\usepackage{picins}

\usepackage{libertine}
\usepackage[vvarbb,bigdelims,cmintegrals,libertine]{newtxmath}

\iftrue
\makeatletter
\def\@settitle{%
	\vspace*{10pt}
	\begin{flushleft}%
		\LARGE\bfseries
		\strut\@title\strut
	\end{flushleft}%
}
\def\@setauthors{%
	\begingroup
	\def\thanks{\protect\thanks@warning}%
	\trivlist
	\raggedright
	\large \@topsep28\p@\relax
	\advance\@topsep by -\baselineskip
	\item\relax
	\author@andify\authors
	\def\\{\protect\linebreak}%
	\authors
	\ifx\@empty\contribs
	\else
	,\penalty-3 \space \@setcontribs
	\@closetoccontribs
	\fi
	\normalfont
	\endtrivlist
	\endgroup
}
\def\@setaddresses{\par
	\nobreak \begingroup
	\small\raggedright
	\def\author##1{\nobreak\addvspace\smallskipamount}%
	\def\\{\unskip, \ignorespaces}%
	\interlinepenalty\@M
	\def\address##1##2{\begingroup
		Address:
		\@ifnotempty{##1}{(\ignorespaces##1\unskip) }%
		{\ignorespaces##2}\par\endgroup}%
	\def\curraddr##1##2{\begingroup
		\@ifnotempty{##2}{\nobreak\noindent\curraddrname
			\@ifnotempty{##1}{, \ignorespaces##1\unskip}\/:\space
			##2\par}\endgroup}%
	\def\email##1##2{\begingroup
		\@ifnotempty{##2}{\nobreak\noindent E-mail address%
			\@ifnotempty{##1}{\par\ignorespaces##1\unskip}\/:\space
			\ttfamily##2\par}\endgroup}%
	\def\urladdr##1##2{\begingroup
		\def~{\char`\~}%
		\@ifnotempty{##2}{\nobreak\noindent\urladdrname
			\@ifnotempty{##1}{, \ignorespaces##1\unskip}\/:\space
			\ttfamily##2\par}\endgroup}%
	\addresses
	\endgroup
	\global\let\addresses=\@empty
}
\def\@setabstracta{%
	\ifvoid\abstractbox
	\else
	\skip@20pt \advance\skip@-\lastskip
	\advance\skip@-\baselineskip \vskip\skip@
	\box\abstractbox
	\prevdepth\z@ 
	\vskip-22pt
	\fi
}
\renewenvironment{abstract}{%
	\ifx\maketitle\relax
	\ClassWarning{\@classname}{Abstract should precede
		\protect\maketitle\space in AMS document classes; reported}%
	\fi
	\global\setbox\abstractbox=\vtop \bgroup
	\normalfont\small
	\list{}{\labelwidth\z@
		\leftmargin0pc \rightmargin\leftmargin
		\listparindent\normalparindent \itemindent\z@
		\parsep\z@ \@plus\p@
		
	}%
	\item[\hskip\labelsep\bfseries\abstractname.]%
}{%
	\endlist\egroup
	\ifx\@setabstract\relax \@setabstracta \fi
}

\def\ps@headings{\ps@empty
	\def\@evenhead{%
		\setTrue{runhead}%
		\normalfont\scriptsize
		\rlap{\thepage}\hfill
		\def\thanks{\protect\thanks@warning}%
		\leftmark{}{}}%
	\def\@oddhead{%
		\setTrue{runhead}%
		\normalfont\scriptsize
		\def\thanks{\protect\thanks@warning}%
		\rightmark{}{}\hfill \llap{\thepage}}%
	\let\@mkboth\markboth
}\ps@headings

\def\section{\@startsection{section}{1}%
	\z@{-1.4\linespacing\@plus-.5\linespacing}{.8\linespacing}%
	{\normalfont\bfseries\Large}}
\def\subsection{\@startsection{subsection}{2}%
	\z@{-.8\linespacing\@plus-.3\linespacing}{.5\linespacing\@plus.2\linespacing}%
	{\normalfont\bfseries\large}}
\def\subsubsection{\@startsection{subsubsection}{3}%
	\z@{.7\linespacing\@plus.2\linespacing}{-1.5ex}%
	{\normalfont\bfseries}}
\def\@secnumfont{\bfseries}

\renewcommand\contentsnamefont{\bfseries\large}
\def\@starttoc#1#2{\begingroup
	\setTrue{#1}%
	\par\removelastskip\vskip\z@skip
	\@startsection{}\@M\z@{\linespacing\@plus\linespacing}%
	{.5\linespacing}{
		\contentsnamefont}{#2}%
	\ifx\contentsname#2%
	\else \addcontentsline{toc}{section}{#2}\fi
	\makeatletter
	\@input{\jobname.#1}%
	\if@filesw
	\@xp\newwrite\csname tf@#1\endcsname
	\immediate\@xp\openout\csname tf@#1\endcsname \jobname.#1\relax
	\fi
	\global\@nobreakfalse \endgroup
	\addvspace{32\p@\@plus14\p@}%
	\let\tableofcontents\relax
}
\def\contentsname{Contents}
\def\l@section{\@tocline{1}{.5ex}{0mm}{5pc}{}}
\def\l@subsection{\@tocline{2}{0pt}{2em}{5pc}{}}
\makeatother

\fi 

\makeatother

\newtheorem{theorem}{Theorem}[section]
\newtheorem*{corollary*}{}
\newtheorem{proposition}[theorem]{Proposition}
\newtheorem{corollary}[theorem]{Corollary}
\newtheorem{lemma}[theorem]{Lemma}

\newtheorem{question}{Question}
\newtheorem{remark}{Remark}

\Crefname{theorem}{Theorem}{Theorems}
\Crefname{definition}{Definition}{Definitions}
\Crefname{proposition}{Proposition}{Propositions}
\Crefname{corollary}{Corollary}{Corollaries}
\Crefname{question}{Question}{Questions}

\theoremstyle{definition}

\newtheorem{defin}[theorem]{Definition}
\newenvironment{definition}
{\pushQED{\qed}\defin}
{\popQED\enddefin}

\Crefname{defin}{Definition}{Definitions}

\makeatletter

\providecommand{\proofname}{Proof}
\makeatother

\newcommand{\ckh}{{CKh}}

\newcommand{\cdkh}{CDKh}
\newcommand{\dkh}{DKh}
\newcommand{\cdbn}{CDBN}
\newcommand{\dbn}{DBN}

\newcommand{\vup}{v^{u}_+}
\newcommand{\vum}{v^{u}_-}
\newcommand{\vlp}{v^{\ell}_+}
\newcommand{\vlm}{v^{\ell}_-}

\newcommand{\rp}{\mathbb{RP}}
\newcommand{\ru}{r^{u}}
\newcommand{\rl}{r^{\ell}}
\newcommand{\gu}{p^{u}}
\newcommand{\gl}{p^{\ell}}
\newcommand{\au}{a^{u}}
\newcommand{\al}{a^{\ell}}
\newcommand{\bu}{b^{u}}
\newcommand{\bl}{b^{\ell}}
\newcommand{\sgu}{\mathfrak{s}^{u}}
\newcommand{\sgl}{\mathfrak{s}^{\ell}}

\begin{document}
	
	\vspace*{-40pt}
	\title{Some link homologies in \( \mathbb{RP}^3 \)}
	
	\author{William Rushworth}
	\email{\href{mailto:william.rushworth@ncl.ac.uk}{william.rushworth@ncl.ac.uk}}
	\address{School of Mathematics, Statistics and Physics, Newcastle University, United Kingdom}
	
	\def\subjclassname{\textup{2020} Mathematics Subject Classification}
	\expandafter\let\csname subjclassname@1991\endcsname=\subjclassname
	\expandafter\let\csname subjclassname@2000\endcsname=\subjclassname
	\subjclass{57K18}
	
	\keywords{}
	
	\begin{abstract}
		We introduce extensions of Khovanov homology and the Lee and Bar-Natan spectral sequences for links in \( \mathbb{RP}^3 \). These extensions are distinct to those previously defined by Asaeda-Przytycki-Sikora (and Gabrov\v{s}ek's generalization), Chen, and Manolescu-Willis. The new Lee and Bar-Natan theories each yield Rasmussen invariants (that are distinct to one another). The invariant extracted from the new Lee homology is distinct to that defined by Manolescu-Willis; it is unclear if the same is true for the new Bar-Natan homology and that defined by Chen.
	\end{abstract}
	
	\maketitle
	
	\section{Introduction}\label{Sec:intro}
	This paper introduces extensions of Khovanov, Lee, and Bar-Natan homology to links in \( \rp^3 \). Asaeda-Przytycki-Sikora first defined an such an extension of Khovanov homology over \( \mathbb{Z}_2\), later upgraded to permit \( \mathbb{Z} \)-coefficients by Gabrov\v{s}ek \cite{APS,Gabrovsek2013}. This theory was recently used by Manolescu-Willis to define Lee homology (and an associated Rasmussen invariant) in \( \rp^3 \), among a number of other results \cite{Manolescu25,Ren2024,Ren2025,Yang2025}. Chen defined an alternative extension of Khovanov homology, and used it to define a Bar-Natan theory for nullhomologous links in \( \rp^3 \) \cite{Chen2021,Chen2025}.
	
	In \Cref{Sec:dkh} we define a new extension of Khovanov homology, and demonstrate that it is distinct to the theories due to Asaeda-Przytycki-Sikora, Chen, and Gabrov\v{s}ek. In \Cref{Sec:dlee,Sec:dbn} we define new extensions of the Lee and Bar-Natan spectral sequences, respectively, before showing that they are distinct to those defined by Manolescu-Willis and Chen. Finally, in \Cref{Sec:properties} we extract Rasmussen invariants from the new spectral sequences. These new invariants depend on the ground ring (as in the classical case), and the invariant over \( \mathbb{Q} \) is distinct to that defined by Manolescu-Willis.
	
	Over \( \mathbb{Z}_2 \), although the spectral sequences are distinct in general, the author is not aware of a knot for which Chen's Rasmussen invariant and that defined here carry inequivalent information (excluding homologically nontrivial knots, on which Chen's invariant is not defined). In other words, in all known examples the \( E_{\infty} \) page of either spectral sequence can be determined from that of the other. However, owing to the significant differences in their construction, it is reasonable to suspect that this does not hold in general; see \Cref{Q:pages,Q:z2ras}.
	
	The homology theories defined in this paper share a number of properties with extant theories, such as functoriality, but differ in important ways. For instance, the new theories are not (unoriented) TQFTs \cite{Turaev2006}. It is interesting to consider if there is a generalized TQFT-like structure that accommodates them; see \Cref{Remark:1}.
	
	The extensions of Khovanov and Lee homology defined here are closely related to those given for virtual links in previous work \cite{Rushworth2017}. Although we do not pursue it, it is likely that this relationship can be made precise by considering Gauss codes with additional decoration (see also \cite{Kauffman2024}).
	
	\subsubsection*{Acknowledgements}
	We thank Carlo Collari for many stimulating discussions on the material that would become \Cref{Sec:dbn}.
	
	\section{Preliminaries}\label{Sec:pre}
	
	We begin by collecting some necessary combinatorial background. By \emph{link} we mean an oriented link in \( \rp^3 \) unless stated otherwise (we use the term \emph{classical} to refer to the setting of \(S^3\)).
	
	\begin{definition} A \emph{diagram} of a link is a tangle diagram in \( D^2 \) with antipodal endpoints identified.
	\end{definition}
	An example is given in \Cref{Fig:1}. For a complete discussion on representing links in \( \rp^3 \) by such diagrams see, for example, \cite[Section 2]{Drob90}.
	
	\begin{proposition}[\cite{Drob90}]\label{Prop:rm} Two diagrams represent the same link if and only if they are related by a finite sequence of the classical Reidemeister moves and the following additional moves
		\begin{center}
			\includegraphics[scale=0.75]{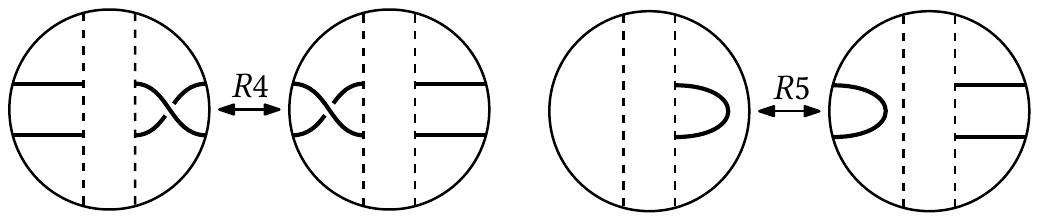}
		\end{center}
	\end{proposition}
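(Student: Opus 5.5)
The plan is the standard Reidemeister-type argument: general position for isotopies in \( \rp^3 \), adapted to the model in which diagrams live. Write \( \rp^3 = B^3/\sim \), where antipodal points of \( \partial B^3 \) are identified. Project \( B^3 \) vertically onto the disc \( D^2 \), so that the boundary sphere maps to \( D^2 \) with antipodal points of \( \partial D^2 \) identified, compatibly with \( \sim \). A link in general position meets the identified boundary sphere transversely in finitely many points. After an isotopy we may assume these points lie on the equator \( \partial D^2 \), and projecting then gives a diagram in the sense of the definition above. The ``if'' direction is a direct check: each classical Reidemeister move is realised by a local isotopy inside a small ball away from \( \partial B^3 \), and each additional move is realised by an explicit isotopy that pushes a strand or crossing across the identified boundary.

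For ``only if'', take an isotopy \( L_t \) between two links whose diagrams are given. By a small perturbation, make the family generic with respect to two structures at once: the projection \( B^3 \to D^2 \) and the stratified boundary \( \partial B^3/\sim \). Then, at all but finitely many times, \( L_t \) meets the boundary transversely in points of the equator and projects regularly. The finitely many nongeneric times are of two kinds. The first kind is classical projection singularities in the interior: a cusp, a tangency of two strands, or a triple point. These give Reidemeister I, II and III, by the usual argument applied locally in a disc missing \( \partial D^2 \).

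The second kind is singularities involving the boundary, and I expect to enumerate the codimension-one events as follows. First, a crossing passes through an equatorial point: two strands cross over the identification at the same moment, so the crossing disappears from one side and reappears antipodally with its type changed appropriately. Second, a strand becomes tangent to the boundary sphere: two boundary points are created or cancelled, so a small arc pushes across \( \partial D^2 \) and reappears as an arc near the antipodal point. One should also check events such as a boundary point sliding past the poles. These are avoided by first isotoping so that \( L_t \cap \partial B^3 \) stays in a collar of the equator, which is possible by a parametric general-position argument. Each generic event should correspond to exactly one of the pictured additional moves, possibly combined with classical moves and planar isotopy.

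The main obstacle is the completeness of this list. I must justify that, generically, no other codimension-one phenomenon arises at the boundary, and that the boundary points can be kept on the equator throughout the family without creating new singularities. I would handle this with a transversality argument in the space of maps \( S^1 \times [0,1] \to \rp^3 \) relative to the stratification induced by the projection and by \( \partial B^3 \). Because this is exactly the content of \cite[Section 2]{Drob90}, in the paper I would simply cite that reference for the detailed stratification analysis.
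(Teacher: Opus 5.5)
The paper gives no argument for this proposition; it is quoted from Drobotukhina. So ending with a citation of \cite[Section 2]{Drob90} is exactly what the paper does, and your two boundary events (a strand tangent to the boundary, and a crossing reaching it) are the right ones. However, the model you set up cannot produce them as written.

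Under vertical projection of the round ball onto \( D^2 \), the fibre over a point of \( \partial D^2 \) is the single equatorial point above it. A point of \( \partial B^3 \) off the equator projects into the \emph{interior} of \( D^2 \). This causes two problems.
\begin{enumerate}[(i)]
\item If you keep \( L_t \cap \partial B^3 \) on the equator for all \( t \), as your last paragraph proposes, a crossing can never reach \( \partial D^2 \). At that instant both strands would pass through the same equatorial point, which is a collision rather than a generic event. Your first boundary event, and hence the crossing move, then never occurs.
\item If you only keep \( L_t \cap \partial B^3 \) in a collar of the equator, the vertical projection of \( L_t \) is not a diagram in the paper's sense, because its endpoints lie inside the disc.
\end{enumerate}
Relatedly, ``boundary points lie on the equator at all but finitely many times'' is not a generic property: a point that leaves the equator stays off it for an interval of time.

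The repair is to change the projection, not the general-position argument. Remove the single point \( [N]=[S] \) of \( \rp^3 \); a generic one-parameter family of links misses it, since the family sweeps out only a \(2\)-dimensional subset. Then project along arcs from \( N \) to \( S \) that foliate \( B^3 \setminus \{N,S\} \), chosen so that the arcs over points of \( \partial D^2 \) are meridians of \( \partial B^3 \). This identifies \( \rp^3 \setminus \{[N]\} \) with the twisted \( I \)-bundle \( D^2 \times (-1,1) \) modulo \( (x,t) \sim (-x,-t) \) for \( x \in \partial D^2 \). Every boundary point of the disc now carries a whole fibre, and \( L_t \) may meet \( \partial B^3 \) anywhere away from the poles.

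The argument is then the classical one over the interior of the disc, where the bundle is trivial, so cusps, tangencies and triple points give R1--R3. Add general position relative to the fixed line \( \partial D^2/\pm \subset \rp^2 \). Its only codimension-one events are a tangency with that line, and a double point on it, i.e.\ two strands meeting the M\"obius band \( (\partial D^2 \times (-1,1))/\sim \) at different heights over the same point.

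In the double-point event, the flip \( t \mapsto -t \) reverses over/under relative to the page, while the local planar picture is reflected. So the crossing sign is preserved rather than ``changed''. The paper needs exactly this for the crossing move to induce a chain isomorphism respecting \( n_- \) and the writhe.
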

	
	Positive and negative crossings, and their \(0\)-\ and \(1\)-resolutions, are defined as in \( S^3 \):
	\begin{center}
		\includegraphics[scale=1]{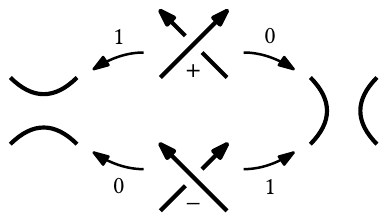}
	\end{center}
	
	We produce a cube of smoothings in the standard way before applying TQFT-like constructions to obtain algebraic invariants.
	
	\begin{definition}
		Let \( D \) be a link diagram. A \emph{smoothing} of \(D\) is a diagram obtained by taking the \(0\)-\ or \(1\)-resolution at every crossing of \(D\).
		
		Suppose that \(D\) has \(n\) crossings. Arbitrarily label the crossings from \(1\) to \(n\), and denote by \( e_1 e_2 \cdots e_n \in \lbrace 0, 1 \rbrace^{\times n} \) the smoothing obtained by resolving the \(i\)-th crossing in the \(e_i\)-resolution. We identify the elements of \( \lbrace 0, 1 \rbrace^{\times n} \) with the corresponding smoothings of \( D \). The resulting object is known as the \emph{cube of smoothings of \(D\)} and is denoted \( \llbracket D \rrbracket \).
	\end{definition}

	An example of a cube of smoothings is given in \cite[Figure 10]{Chen2021}.
	
	If two smoothings of \( D \) differ at exactly one crossing then they cobound a pair of pants or a once-punctured M\"{o}bius band. We regard the corresponding edge of \( \llbracket D \rrbracket \) as being decorated with the appropriate (directed) cobordism. A \emph{face} of \( \llbracket D \rrbracket \) consists of the edges making up a cycle of length \(4\) when \( \lbrace 0 , 1 \rbrace^{\times n} \) is considered as a graph, with elements as vertices and an edge between two vertices if they differ at exactly one position.
	
	We shall frequently make use of the following notions, as they are to our generalizations of Lee and Bar-Natan homology as orientations are to the classical theories. For an expanded discussion in the related setting of virtual links see \cite{Kauffman2004b,Rushworth2021}.
	\begin{definition}[\(2\)-colouring]
		Let \( D \) be a link diagram and \( S(D) \) the immersed curve obtained by removing the over-\ and undercrossing decorations of \(D\).
		
		A \emph{\(2\)-colouring} of \( S(D) \) is a colouring of its nonsingular points using at most \(2\) colours such that the following holds at every double point (up to rotation):
		\begin{center}
			\includegraphics[scale=1]{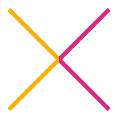}
		\end{center}
		A \emph{\(2\)-colouring} of \( D \) is a \(2\)-colouring of \( S(D) \). A link is \emph{\(2\)-colourable} if a diagram of it possesses a \(2\)-colouring.
	\end{definition}
	Examples of these notions are given in \Cref{Fig:1}. Every knot in \( \rp^3 \) is \(2\)-colourable, as every crossing is visited exactly twice when a diagram is traversed. Not all links are \(2\)-colourable, as demonstrated in \Cref{Fig:1}. The twisted orientable links defined by Chen \cite{Chen2025} form a subset of \(2\)-colourable links; see \Cref{Sec:dbn} for a comparison of the two notions.
	
	\begin{figure}
		\begin{center}
			\includegraphics[scale=0.75]{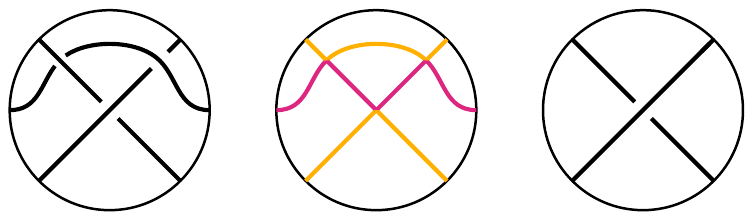}
			\caption{From left to right: a link diagram; a \(2\)-colouring of it; a diagram of a link that is not \(2\)-colourable.\label{Fig:1}}
		\end{center}
	\end{figure}
	
	The \(2\)-colourability of a link is determined as follows.
	
	\begin{definition}\label{Def:degenerate}
		Let \( D = D_1 \cup D_2 \cup \cdots \cup D_n \) be a link diagram with components \( D_i \) (link components rather than connected components in \( \rp^2 \)). A component \( D_i \) is \emph{degenerate} if
		\begin{equation*}
			\text{lk} \left( D_i, D \setminus D_i \right) = 1 \mod 2.
		\end{equation*}
	\end{definition}
	
	Both components of the rightmost diagram of \Cref{Fig:1} are degenerate. It is clear that the moves of \Cref{Prop:rm} preserve the degenerate condition, so that we may speak of degenerate components of a link.
	
	\begin{proposition}\label{Prop:2col}
		Let \( L \) be a link of \(n\) components. Then
		\begin{equation*}
			| \lbrace \text{\(2\)-colourings of \(L\)} \rbrace | = \left\lbrace \begin{matrix}
				0, ~\text{if \(L\) possesses a degenerate component} \\
				2^n, ~\text{otherwise.}
			\end{matrix}\right.
		\end{equation*}
	\end{proposition}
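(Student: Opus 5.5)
The plan is to turn the $2$-colouring condition into a parity condition along each link component, and then count the colourings component by component.

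\emph{Reading the local rule.} I read the local picture in the definition of $2$-colouring as follows: at a double point of $S(D)$, each of the two strands changes colour as it passes through, and the two colours appear on adjacent pairs of half-edges. Label the half-edges around the double point cyclically as $a,c,b,d$, where $a,b$ lie on one strand and $c,d$ on the other. The rule then amounts to $a\neq b$ and $c\neq d$. The adjacency condition ($a=c,\,b=d$ or $a=d,\,b=c$, up to rotation) follows automatically from these once only two colours are available. So the rule imposes no coupling between the two strands at a crossing. The colour is unchanged across the antipodal identification on $\partial D^2$, since that is just a point of a single arc.

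\emph{Reduction to a parity condition.} A colouring of $S(D)$ is therefore the same as a choice, for each component $D_i$, of a colouring of the circle $D_i$ that flips at every passage through a double point. Such a colouring exists if and only if the number of double-point passages along $D_i$ is even. If it is even, there are exactly two such colourings, determined by the colour of a single chosen arc. Each self-crossing of $D_i$ is passed twice, and each crossing between $D_i$ and $D\setminus D_i$ is passed once. Hence the number of passages is congruent mod $2$ to the number of crossings between $D_i$ and $D\setminus D_i$. Moreover, the colourings of different components are chosen independently. This gives exactly $2^n$ colourings when every such crossing count is even, and $0$ otherwise.

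\emph{Identification with linking numbers (the main obstacle).} It remains to show that the number of crossings between $D_i$ and $D\setminus D_i$, taken mod $2$, equals $\mathrm{lk}(D_i,D\setminus D_i)$ mod $2$. This is where care is needed, because in $\rp^3$ the linking number is only defined mod $2$ in general. I would argue that this crossing count mod $2$ is the $\mathbb{Z}_2$-intersection number in $\rp^2$ of the projections of $D_i$ and $D\setminus D_i$. Equivalently, it is the mod-$2$ linking pairing of their classes, which is what the definition of degeneracy records; the rightmost diagram of \Cref{Fig:1}, with two projective lines crossing once, is the model case. To confirm invariance directly, I would check the moves of \Cref{Prop:rm}. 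The classical moves change inter-component crossing counts by $0$ or $2$. For the additional moves I would verify on the pictures that a strand passing through the boundary changes its crossings with any other component by an even number.

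Finally, by \Cref{Prop:rm} the count of colourings does not depend on the chosen diagram, because degeneracy is preserved by the moves. This yields the stated formula.
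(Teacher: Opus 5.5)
Your proposal is correct and is essentially the Gauss-code argument the paper defers to \cite{Rushworth2021}. The local rule makes each strand switch colour at every crossing passage, with no coupling between strands, so each component independently admits two colourings or none according to the parity of its crossings with the rest of the link. Your identification of that parity with the $\mathbb{Z}_2$-intersection number of the projections in $\rp^2$ justifies the diagram-independence the paper simply asserts, and it matches how $\mathrm{lk}$ mod $2$ is used in \Cref{Lem:to2cl}; your aside that linking numbers in $\rp^3$ are ``only defined mod $2$'' is loose (rational linking numbers exist, valued in $\tfrac{1}{2}\mathbb{Z}$), but it plays no role in the argument.
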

	
	The proof of this statement is easily surmised from those given for the analogous statements in \cite[Section 2]{Rushworth2021}, as Gauss codes are naturally defined for links in \( \rp^3 \).

	Finally, observe that a \(2\)-colouring of a link diagram \(D\) prescribes a smoothing of \(D\) by resolving crossings via the convention:
	\begin{equation}\label{Eq:2colsmoothing}
			\begin{matrix}
			\includegraphics[scale=1]{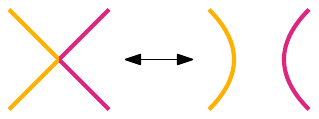}
			\end{matrix}
	\end{equation}
	We refer to smoothings obtained in this way as \emph{\(2\)-coloured smoothings}.
	
	\section{Extending Khovanov homology}\label{Sec:dkh}
	The homology theories defined in \Cref{Sec:dkh,Sec:dlee,Sec:dbn} are constructed via the now-standard procedure: given a diagram \(D\) we associate modules to the vertices and maps to the edges of \( \llbracket D \rrbracket \) to produce a chain complex, the chain homotopy equivalence class of which is an invariant of the link represented by \(D\). In this section we do so to construct an extension of Khovanov homology to \( \rp^3 \).
	
	Let \( R \) be a commutative unital ring and set
	\begin{equation*}
		\mathcal{A} = R \left[ X \right] / \langle X^2 \rangle  = \langle v_+, v_- \rangle_R.
	\end{equation*}
	Define a grading, \(p\), on \( \mathcal{A} \) as \( p ( v_{\pm} ) = \pm 1 \) and extend it linearly over tensor products. Let \( \lbrace - 1 \rbrace \) denote a shift in \( p \)-grading by \( -1 \) and set
	\begin{equation*}
		A^k = \mathcal{A}^{ \otimes k } \oplus \mathcal{A}^{ \otimes k } \lbrace - 1 \rbrace
	\end{equation*}
	so that in particular \( A \coloneqq A^1 = \mathcal{A} \oplus \mathcal{A} \lbrace - 1 \rbrace \). Where necessary we distinguish between the unshifted and shifted copies of \( \mathcal{A} \) via the superscripts \( u \) and \( \ell \) e.g.\
	\begin{equation*}
		A = \langle \vup, \vum, \vlp, \vlm \rangle_R .
	\end{equation*}
	We abbreviate super-\ and subscripts across tensor products, writing \( v^u_{+-} = \vup \otimes \vum \) and so on.
	
	\begin{definition}\label{Def:cdkh}
		Let \(D\) be a link diagram with \(n_- \) negative crossings. To \(S\), a vertex of \( \llbracket D \rrbracket \) consisting of \(k\) circles, assign \( A^k \), under an arbitrary identification of the circles of \(S\) with (pairs of) the tensorands of \(A^k\). Define the \emph{height of \(S\)} as
		\begin{equation*}
			\left| S \right| \coloneqq \# \left( \text{\(1\)-resolutions in \(S\)} \right) - n_-
		\end{equation*}
		and set \( \cdkh_i (D) \) to be the direct sum of the \(R\)-modules assigned to smoothings of height \(i\). We refer to \(i\) as the homological grading.
		
		Postponing verification to \Cref{Prop:d2}, we assert that the \emph{doubled Khovanov complex of \(D\)} is the chain complex with chain spaces \( \cdkh_i (D) \) and differential given by matrices of maps, the entries of which are determined by the edges of \( \llbracket D \rrbracket \). The maps \( m \), \( \Delta \) are assigned to pairs of pants, and \( \eta \) to the once-punctured M\"{o}bius band. Their specific form is as follows.
		
		The \( m \) and \( \Delta \) maps do not interact with the \( u \), \( \ell \) superscripts and so we suppress them
		\begin{equation}\label{Eq:diffcomp}
			\begin{aligned}
			m : A^2 &\rightarrow A &\qquad \Delta : A &\rightarrow A^2 \\
			v_{++} &\mapsto v_+ &\qquad v_+ &\mapsto v_{+-} + v_{-+} \\
			v_{+-}, v_{-+} &\mapsto v_{-} &\qquad v_{-} &\mapsto v_{--}. \\
			v_{--} &\mapsto 0 &
			\end{aligned}
		\end{equation}
		(These are the maps employed in classical Khovanov homology.)
		
		The \(\eta\) map has the form
		\begin{equation*}
			\begin{aligned}
				\eta : A &\rightarrow A \\
				\vup &\mapsto \vlp \\
				\vlp &\mapsto 2 \vum \\
				\vum &\mapsto \vlm \\
				\vlm &\mapsto 0.
			\end{aligned}
		\end{equation*}
		Its extension across tensor products alters the superscript of the entire string and the subscript of the tensorand associated to the boundary of the once-punctured M\"{o}bius band. E.g.\ \( \eta ( v^{u}_{++-} ) = 2 v^{\ell}_{+--} \) if the boundary in question corresponds to the second tensorand in both domain and codomain.
		
		To these entries we add signs, using any convention that ensures that each face contains an odd number of minus signs.
		
		Finally, we define the quantum grading on \( \cdkh (D) \) as
		\begin{equation*}
			j ( x ) = p ( x ) + i (x) + wr (D)
		\end{equation*}
		for \(i\) the homological grading and \( wr (D) \) the writhe of \(D\), so that
		\begin{equation*}
			\cdkh ( D ) = \bigoplus_{i,j} \cdkh_{i,j} ( D )
		\end{equation*}
		for \( \cdkh_{i,j} ( D ) \) the homogeneous elements of homological degree \(i\) and quantum degree \(j\).
	\end{definition}
	
	\begin{proposition}\label{Prop:d2}
		When equipped with the differential as described in \Cref{Def:cdkh} the object \( \cdkh ( D )  \) is a chain complex.
	\end{proposition}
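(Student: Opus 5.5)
Since each face of \( \llbracket D \rrbracket \) carries an odd number of minus signs, it suffices to show that every face \emph{commutes} before signs are inserted. In other words, for any two crossings \(c,c'\) and any smoothing \(S\) in which both are \(0\)-resolved, the two composites along the face from \(S\) to the smoothing with both \(1\)-resolved must agree as maps \(A^k\to A^{k'}\). Once this holds, the odd sign count makes the face anticommute. Then \(d^2=0\) follows in the usual way, because each entry of \(d^2\) is a sum over the two paths of a face.

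The plan is to classify faces by the cobordisms on their edges, as in the classification of cubes for links in thickened surfaces or for virtual links. Each edge is either a pair of pants (\(m\) or \(\Delta\)) or a once-punctured M\"obius band (\(\eta\)). Following the approach of \cite{Rushworth2017}, I would use Gauss-code reasoning to list the possible local configurations of the two crossings relative to the circles of \(S\). The key structural claim is that the number of \(\eta\) edges on the two paths of a face has the same parity, so each path contains either \(0\) or \(1\) \(\eta\) edges, or both paths consist entirely of \(\eta\)'s.

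\textbf{Faces with no \(\eta\).} The maps \(m\) and \(\Delta\) ignore the \(u,\ell\) superscripts. These faces therefore commute by the classical Khovanov argument applied separately to the \(u\) and \(\ell\) copies.

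\textbf{Mixed faces.} Here one path is, for example, \(\eta\circ m\) and the other \(m\circ\eta\), with similar cases involving \(\Delta\).
\begin{itemize}
\item Both \(\eta\) maps act as the same \(u\to\ell\) or \(\ell\to u\) switch on the whole string.
\item On the relevant tensorand, \(\eta\) acts as multiplication by \(X\), except on \(v^\ell_+\), where it gives \(2v^u_-\).
\end{itemize}
Write \(\eta=\sigma\circ\phi\), where \(\sigma\) swaps superscripts and \(\phi\) is multiplication by \(X\) or by \(2X\) on one tensorand. Since \(m\) and \(\Delta\) are \(\mathcal A\)-module maps, \(X\) commutes past them: \(m(Xa\otimes b)=Xm(a\otimes b)\), and similarly for \(\Delta\). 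Commutativity then reduces to checking that the boundary of the M\"obius band is a circle ``merged into'' or ``split from'' on both sides consistently. It also requires the superscript-dependent factor \(2\) to appear on both paths together. This holds because the superscript before applying \(\eta\) is the same on both paths: the \(m\) and \(\Delta\) maps do not change it.

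\textbf{Faces with two \(\eta\)'s on each path.} These are the genuinely new cases, arising when a single circle passes through the antipodal boundary and both crossings give M\"obius bands. Each path then applies \(\eta\circ\eta\), which sends \(u\to u\) or \(\ell\to\ell\). On a single tensorand, \(\eta^2\) gives
\[
v^u_+\mapsto 2v^u_-,\quad v^u_-\mapsto 0,\quad v^\ell_+\mapsto 2v^\ell_-,\quad v^\ell_-\mapsto 0,
\]
that is, multiplication by \(2X\) in either order. This is where the factor \(2\) is forced. I expect this case to be the main obstacle, for two reasons. First, I must verify that in such faces both \(\eta\)'s act on the same tensorand, which is the single circle at every stage. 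Second, there are subtler configurations in which a face has an \(\eta\) edge opposite a pair-of-pants edge with different boundary circles; these must be handled carefully. For this I would enumerate the face types topologically, using the fact that a face corresponds to a cobordism of Euler characteristic \(-2\) in \(\rp^2\times I\) that decomposes in two ways. The same computation of the face as a composite cobordism then shows that both paths give the same algebraic map.
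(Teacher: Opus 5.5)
\textbf{There is a genuine gap: the face classification omits the one face that actually constrains \(\eta\).} Your overall strategy matches the paper's: check that every face commutes before signs, then let the odd sign count give anticommutation. The paper simply checks the handful of faces that involve \(\eta\).

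The problem is your key structural claim. The parity count is correct, but it does not imply ``each path has \(0\) or \(1\) \(\eta\) edges, or both paths are all \(\eta\).'' Parity also allows one path with two \(\eta\)'s while the other path consists of two pair-of-pants maps, and such faces do occur in \(\rp^2\). Take a single nullhomotopic circle \(C\) with two crossings:
\begin{itemize}
\item one whose saddle lies on the M\"obius-band side of \(C\) (an \(\eta\) edge);
\item one whose saddle lies on the disc side (a \(\Delta\) edge);
\item with the feet of the two saddles alternating along \(C\).
\end{itemize}
Resolving the \(\eta\) crossing first reverses the direction of traversal of one of the two arcs of \(C\). This flips the other saddle to a M\"obius band as well, so that path is \(\eta\circ\eta\). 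Resolving the other crossing first splits \(C\) into two circles, which the first crossing then merges, so that path is \(m\circ\Delta\).

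This face is exactly where the coefficient \(2\) is forced. On either copy, \(\eta\circ\eta = 2X = m\circ\Delta\), and with \(\eta(\vlp)=\vum\) instead of \(2\vum\) the face would not commute. By contrast, a face whose two paths are both \(\eta\circ\eta\) on the same circle constrains nothing, since both paths are literally the same composite. So you have located the factor \(2\) in the wrong place, and your enumeration as stated would never perform the one nontrivial new check.

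\textbf{Your description of \(\eta\) is also wrong.} Since \(\eta(\vup)=\vlp\) and \(\eta(\vum)=\vlm\), on the relevant tensorand \(\eta\) acts as the identity from the \(u\)-copy and as multiplication by \(2X\) from the \(\ell\)-copy. So \(\phi\in\{1,2X\}\), not \(\{X,2X\}\); your own formula for \(\eta^2\) is only consistent with this corrected version. With the fix, your mixed-face argument goes through, because \(m\) and \(\Delta\) are \(\mathcal{A}\)-linear and the superscript just before the \(\eta\) is the same on both paths.

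\textbf{Your worry about all-\(\eta\) faces is real, but it is resolved by geometry, not algebra.} If a face had its two \(\eta\)'s on distinct circles \(C\neq C'\), one path would give \(2X\) on \(C\) and the other \(2X\) on \(C'\), and the face would not commute. Such faces cannot occur. A M\"obius-band saddle on a circle requires the circle to be nullhomotopic and the saddle core, closed up along an arc of the circle, to form a one-sided loop. Two disjoint one-sided loops cannot exist in \(\rp^2\).

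That argument, together with the fact that disjoint essential arcs in a M\"obius band have alternating feet, shows that no face has four \(\eta\) edges at all. The topological enumeration you defer to the end is therefore where the real content lies, and it must include the \(\eta\circ\eta\) versus \(m\circ\Delta\) face.
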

	
	\begin{proof}
		Unlike other extensions of Khovanov homology to \( \rp^3 \) (such as \cite{APS,Gabrovsek2013,Chen2021}) the doubled construction does not involve any auxiliary data in addition to those used in the construction of classical Khovanov homology. That is, no extra decorations are placed on diagrams or their smoothings.
		
		As such we need only consider the maps around the faces of \( \llbracket D \rrbracket \), rather than the particular diagrams that yield them. It suffices to check the faces
		\begin{equation*}
			\begin{matrix}
				\includegraphics[scale=0.85]{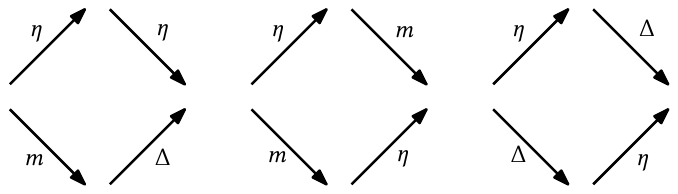}
			\end{matrix}
		\end{equation*}
		as the other possible faces are handled by the classical proof. The commutativity of these faces is readily verified, so that any sign assignment yields a chain complex.
	\end{proof}
	
	\begin{theorem}\label{Thm:invariance1}
		The chain homotopy equivalence class of \( \cdkh (D) \) is an invariant of the link represented by \( D \).
	\end{theorem}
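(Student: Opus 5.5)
By \Cref{Prop:rm}, it suffices to show that the chain homotopy type of \( \cdkh(D) \) is unchanged under the classical Reidemeister moves and the two additional moves. I would also check that it does not depend on the auxiliary choices. Different labellings of crossings, identifications of circles with tensorands, and sign assignments all give isomorphic complexes. For the signs, any two assignments with an odd number of minus signs on each face differ by a coboundary on the cube, as in the classical case, so the complexes are isomorphic via a diagonal sign change.

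For the classical moves R1, R2, R3, I would follow Bar-Natan's local argument via Gaussian elimination, treating \( \cdkh(D) \) as built from the cube with the twisted local structure inside the move's disc. The key observation is that the local cancellations only use the maps \( m \) and \( \Delta \) on circles meeting the disc, together with \( v_\pm \) and the unit/counit relations. These act identically on the \( u \) and \( \ell \) summands. The edges of \( \llbracket D \rrbracket \) away from the disc may be \( \eta \)-maps, and these commute with the local isomorphisms. Where a circle meeting the disc is itself the boundary of a Möbius band elsewhere, I need the identities \( \eta \circ m = m \circ (\eta \otimes 1) \) and similarly for \( \Delta \), which I take from the face commutativity already checked in \Cref{Prop:d2}. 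The delooping isomorphism \( A^{k+1} \cong A^k\{+1\} \oplus A^k\{-1\} \), splitting off a trivial circle, is compatible with the doubling, so the R1 and R2 eliminations go through verbatim. R3 then follows from R2 via the usual cone argument. I would track the quantum shifts \( i + wr(D) \) to confirm gradings match as in the classical case.

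The main obstacle is the two additional moves of \Cref{Prop:rm}, in which a strand passes through the antipodal boundary. Here one compares cubes whose edges change type: an edge that is a pair of pants in one diagram may become a once-punctured Möbius band in the other, because the number of circles in a smoothing is counted in \( \rp^2 \). My plan is to reduce to a local computation for the move that slides a crossing across the boundary circle. Such a move exchanges a crossing near one side of the boundary for a crossing on the other, with the same resolutions up to isotopy in \( \rp^2 \). So there is a canonical bijection of vertices preserving circle counts, and hence a candidate isomorphism of complexes. This isomorphism must intertwine each \( \eta \) with the corresponding map in the other diagram. I expect it can be taken diagonal, possibly rescaling by signs or swapping the \( u \) and \( \ell \) summands on certain vertices. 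The asymmetry \( \vlp \mapsto 2\vum \) in \( \eta \) means an arbitrary swap will not work. So I would first try the identity on vertices together with a sign correction, and check the faces involving \( \eta \).

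For the other additional move, which I expect to be a Reidemeister-I-like twist across the boundary, I would instead perform Gaussian elimination along the new \( \eta \) edge where the relevant entry \( \vup \mapsto \vlp \) is invertible. I would then verify that the resulting complex is isomorphic to the original, with the correct grading shift coming from the change in writhe. This is the step I would spend most effort on.
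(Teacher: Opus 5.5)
Your handling of the auxiliary choices and of the classical moves agrees with the paper. That is, you deloop the small circle and cancel along the \(m\) and \(\Delta\) components that touch it, with \(\eta\)-edges elsewhere in the cube commuting with these local maps. The gap is in the two additional moves of \Cref{Prop:rm}, which you have misidentified.

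Both additional moves are isotopies of the underlying immersed curve in \(\rp^2\) that only change its position relative to the boundary circle of the disc. One pushes an arc of the diagram across the boundary; the other pushes a crossing across it. Neither creates or removes a crossing, and each smoothing before the move is isotopic in \(\rp^2\) to the corresponding smoothing after it. Hence the number of circles at each vertex is unchanged. So is the type of each edge (\(m\), \(\Delta\) or \(\eta\), together with the circles it involves). Since \(\cdkh(D)\) uses no data beyond this, the two complexes are isomorphic via the identity on each vertex. This is exactly the paper's argument.

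In particular, your claim that an edge can change from a pair of pants to a once-punctured M\"obius band is false, and no \(u/\ell\) swaps or rescalings are needed. For the crossing move, the one thing to check is that the over/under switch seen in the disc comes with a reflection of the local picture. Then \(0\)-resolutions correspond to \(0\)-resolutions, and crossing signs (hence heights and \(wr(D)\)) are preserved.

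Your ``Reidemeister-I-like twist across the boundary'' is not one of the moves, so the arc-across-the-boundary move is never treated. Moreover, the method you propose for such a twist cannot succeed, because \(\eta\) is not invertible: it kills \(\vlm\). In a cone on \(\eta\), cancel the unit entries \(\vup \mapsto \vlp\) and \(\vum \mapsto \vlm\); the component \(\vlp \mapsto 2\vum\) survives. Already for a single circle, the cone on \(\eta\colon A \to A\) is not a shifted copy of \(A\). Its homology has rank \(2\) over \(\mathbb{Q}\), and over \(\mathbb{Z}\) it contains \(2\)-torsion.

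So the verification you intended to spend most effort on would fail. What is needed instead is the observation above: both additional moves leave the decorated cube \(\llbracket D \rrbracket\) unchanged, and therefore induce chain isomorphisms.
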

	
	\begin{proof}
		As with the proof of \Cref{Prop:d2}, that we have used no additional auxiliary data when constructing \( \cdkh (D) \) allows us to follow the classical proof of invariance closely.
		
		The moves \( R4 \) and \( R5 \) of \Cref{Prop:rm} induce isomorphisms at the chain level. That is, neither move alters the number of circles in a smoothing nor the configuration of edges between them.
		
		Concerning the classical Reidemeister moves, we suffice ourselves by observing that the so-called delooping techniques used to prove invariance in the classical case translate in a straight-forward way to this setting (see, for example, \cite[Lemma 3.7]{Bar-Natan2002}, \cite[Lemma 3.2]{Bar-Natan2005}, and \cite[Proposition 5.1]{Naot2006}).
	\end{proof}
	
	It follows that the homology of \( \cdkh ( D ) \) is an invariant of the link, \(L\), represented by \( D \), and we define the \emph{doubled Khovanov homology of \(L\)} to be
	\begin{equation*}
		\dkh ( L ) \coloneqq H \left( \cdkh (D) \right).
	\end{equation*}
	
	\subsection*{Comparison with other extensions}
	To compare doubled Khovanov homology to previously-defined extensions it is convenient to pass to a reduced version. Let \( D \) be a diagram of a link \( L \) with a marked point on one of its components. The marked point may be carried over to smoothings of \( D \); a circle in a smoothing is itself \emph{marked} if it contains the marked point. Following the classical definition, define \( \widetilde{\cdkh  }(D) \) to be the subcomplex of \( \cdkh (D) \) spanned by states in which the tensorand corresponding to the marked circle is labelled with \( \vum \) or \( \vlm \). Denote by \( \widetilde{\dkh  }(L) \) the homology of \( \widetilde{\cdkh  }(D) \). The proof that \( \widetilde{\dkh  }(L) \) is indeed an invariant of \( L \) is essentially identical to that applied in the classical case.
	
	Consider the following knot, labelled \( 3_1 \) in Drobotukhina's table \cite{Drobotukhina94}, and its (reduced) doubled Khovanov homology over \( \mathbb{Z}_2 \), where \( \bullet, \circ \) denotes a copy of \( \mathbb{Z}_2 \), the hollow generator \( \circ \) is at bigrading \( (0,0) \), and both axes are labelled by the integers: 
	\begin{center}
		\includegraphics[width=1\textwidth]{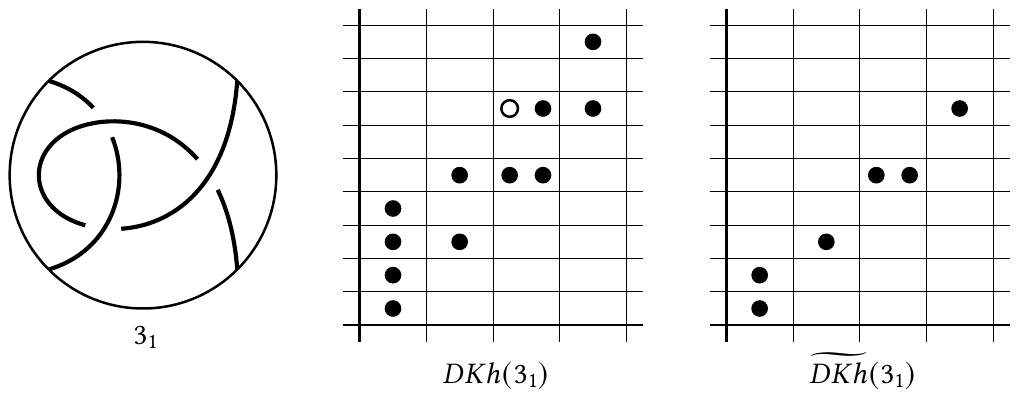}
	\end{center}
	In \cite[Section 2.6]{Chen2021} a range of homology theories (with coefficients in \( \mathbb{Z}_2 \)) are evaluated on \( 3_1 \). The reduced doubled Khovanov homology of \( K \) is distinguished from \( \alpha_0 \), \( \alpha_1 \), \( \alpha_{HF} \), \( \alpha^{\prime}_{HF} \), and from reduced Asaeda-Przytycki-Sikora homology by the rank.

	\begin{wrapfigure}{r}{0.32\textwidth}
		\centering
		\includegraphics[scale=0.65]{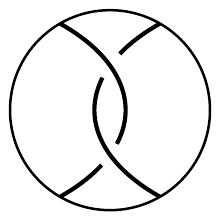}
		\caption{}\label{Fig:2}
	\end{wrapfigure}
	
	Gabrov\v{s}ek defined an extension of Asaeda-Przytycki-Sikora homology to allow for \( \mathbb{Z} \)-coefficients \cite{Gabrovsek2013}, to which doubled Khovanov homology is also distinct. The knot of \Cref{Fig:2} is labelled \(2_1\) in Drobotukhina's table; evaluating Gabrov\v{s}ek's theory on \( 2_1 \) yields no torsion, while its doubled Khovanov homology over \( \mathbb{Z} \) contains \(2\)-torsion.
	
	\section{Extending Lee homology}\label{Sec:dlee}
	
	Lee defined a perturbation of Khovanov homology in which terms of nonzero quantum degree are added to the differential \cite{Lee2005}. The rank and homological degree support of the resulting theory depend only on the linking matrix, but the quantum degree support contains deep geometric information. The perturbed theory is also a concordance invariant.
	
	In this section we define an extension of Lee homology to \( \rp^3 \) that also possesses the properties described above (with concordance invariance established in \Cref{Sec:properties}).
	
	\begin{definition}\label{Def:cdlee}
		Let \( D \) be a link diagram. Setting \( R \) to be commutative unital ring in which \(2\) is invertible, denote by \( \cdkh' (D) \) the chain complex with chain spaces equal to those of \( \cdkh (D) \) (as given in \Cref{Def:cdkh}) and differential components
		\begin{align*}
			m' : A^2 &\rightarrow A & \Delta ' : A &\rightarrow A^2 \\
			v_{++} &\mapsto v_+ & v_+ &\mapsto v_{+-} + v_{-+} \\
			v_{+-}, v_{-+} &\mapsto v_{-} & v_{-} &\mapsto v_{--} + v_{++} \\
			v_{--} &\mapsto v_+ &
		\end{align*}
		(where we have suppressed superscripts as in \Cref{Eq:diffcomp}) and
		\begin{equation*}
			\begin{aligned}
				\eta' : A &\rightarrow A \\
				\vup &\mapsto \vlp \\
				\vlp &\mapsto 2 \vum \\
				\vum &\mapsto \vlm \\
				\vlm &\mapsto 2 \vup.
			\end{aligned}
		\end{equation*}
		Notice that these maps possess terms of \(j\)-degree \(4\), so that the \(j\)-grading of \( \cdkh (D) \) must be weakened to a filtration of \( \cdkh ' (D) \). The homological grading is defined as is done for \( \cdkh (D) \).
	\end{definition}
	
	\begin{theorem}\label{Thm:dleeinvariance}
		Let \( D \) be a diagram of a link \(L\). The object \( \cdkh ' ( D ) \) is a chain complex, the chain homotopy equivalence class of which is an invariant of \(L\).
	\end{theorem}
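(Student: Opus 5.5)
The plan is to follow the proofs of \Cref{Prop:d2} and \Cref{Thm:invariance1}, using again that no auxiliary data beyond the classical ingredients is involved in building \( \cdkh'(D) \).

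For \( d^2 = 0 \) it suffices to show that every face of \( \llbracket D \rrbracket \) commutes before signs are inserted; any sign convention with an odd number of minus signs per face then gives \( d^2=0 \). Faces built only from \( m' \) and \( \Delta' \) do not interact with the \( u,\ell \) superscripts, so they commute by Lee's classical argument. The new faces are those in \Cref{Prop:d2} containing at least one \( \eta' \) edge. The cleanest way to check them is to diagonalise. Over \( R \) with \( 2 \) invertible, set \( \mathbf{a} = v_- + v_+ \) and \( \mathbf{b} = v_- - v_+ \) in each superscript. In this basis \( m' \) and \( \Delta' \) take the familiar Lee form: \( m'(\mathbf{a}\otimes\mathbf{a}) = 2\mathbf{a} \), \( m'(\mathbf{b}\otimes\mathbf{b}) = -2\mathbf{b} \), mixed terms vanish, and \( \Delta' \) is diagonal. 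The map \( \eta' \) acts on a circle by
\[
\mathbf{a}^u \mapsto \mathbf{a}^\ell, \qquad \mathbf{a}^\ell \mapsto 2\mathbf{a}^u, \qquad \mathbf{b}^u \mapsto \mathbf{b}^\ell, \qquad \mathbf{b}^\ell \mapsto -2\mathbf{b}^u .
\]
So \( \eta' \) preserves each Lee label and swaps \( u \leftrightarrow \ell \), up to a scalar that depends only on the label and the superscript. With this, each face reduces to checking that the two composites around it carry the same scalar. This is a short case check on the faces involving \( \eta' \) (for example \( \eta'\eta' \) against \( m'\Delta' \), and \( \eta' \) commuting past \( m' \) or \( \Delta' \)). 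I expect this check to be the main computational obstacle: the \( \pm 2 \) factors must match exactly, and the \( \mathbf{b}^\ell \mapsto -2\mathbf{b}^u \) sign is precisely what makes the Möbius faces commute.

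For invariance, I would again mirror the proof of \Cref{Thm:invariance1}. The moves \( R4 \) and \( R5 \) do not change circle counts or the edge configuration of \( \llbracket D \rrbracket \), so they induce chain isomorphisms. For the classical Reidemeister moves I would use Bar-Natan's delooping together with Gaussian elimination, now with filtered maps. The ingredient needed is that a circle with label algebra \( A \) splits as two shifted copies of \( A^0 = R^u \oplus R^\ell \). This holds because the \( m' \), \( \Delta' \) part is the Frobenius algebra \( R[X]/(X^2-1) \) tensored with the two-dimensional superscript space, and \( m' \), \( \Delta' \) act trivially on that space. The local pictures in the Reidemeister moves involve only pairs-of-pants edges near the move, or \( \eta' \) edges that commute with the delooping isomorphisms (as the diagonalised form above shows). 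Hence Lee's classical homotopy equivalences go through verbatim, tensored with the identity on superscripts. The filtration is respected because all the maps involved are filtered of degree \( \ge 0 \), exactly as in the classical Lee case.
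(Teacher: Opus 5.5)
Your proposal is correct and follows the paper's route. The paper's proof defers to the classical argument (Rasmussen's Section 6) on the grounds that no auxiliary data enter $\cdkh'(D)$, and your diagonalised face check (where $\eta'\eta' = m'\Delta'$, acting by $2$ on $\mathbf{a}$ and $-2$ on $\mathbf{b}$) together with your delooping and Gaussian-elimination sketch makes that appeal explicit.

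One claim should be softened: in an $R2$ or $R3$ move, a local saddle that does not touch the small circle can globally be a once-punctured M\"obius band, so $\eta'$ can appear inside the homotopy equivalences, which are therefore not literally the classical ones tensored with the identity on superscripts. This does no harm, because Gaussian elimination only needs the components that create or absorb the small circle to be invertible, and those are always pairs of pants.
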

	\begin{proof}
		As with similar verifications in \Cref{Sec:dkh} the classical proof can be followed closely, owing to the lack of auxiliary data employed in the construction of \( \cdkh ' ( D ) \). See, for example, \cite[Section 6]{Rasmussen2010}.
	\end{proof}
	
	Given \(D\) a diagram of \(L\) we define the \emph{doubled Lee homology of \(L\)} to be
	\begin{equation*}
		\dkh' ( L ) \coloneqq H \left( \cdkh' (D) \right).
	\end{equation*}
	
	Manolescu-Willis defined an extension of Lee homology to \( \rp^3 \) \cite{Manolescu25}, to which doubled Lee homology is distinct. For example, the result of evaluating Manolescu-Willis' extension on the knot \(2_1\) (given in \Cref{Fig:2}) is supported in homological degree \(0\); the doubled Lee homology of \(2_1\) is supported in homological degree \(-2\). The two homologies are also distinguished by their quantum degree supports, as discussed in \Cref{Sec:ras_inv}.
	
	As in the classical case doubled Lee homology is the \( E_{\infty} \) page of the spectral sequence associated to the filtration implicit in \Cref{Def:cdlee}.
	
	\begin{theorem}\label{Thm:leess}
		Let \( L \) be a link. There is a spectral sequence with \( E_2 \) page \( \dkh (L) \) that abuts to \( \dkh ' ( L ) \), each page of which is an invariant of \( L \).
	\end{theorem}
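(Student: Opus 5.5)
We will follow Rasmussen's construction of the classical Lee spectral sequence. First, equip \( \cdkh'(D) \) with the decreasing filtration \( \mathcal{F}_k = \langle x : j(x) \geq k \rangle \), with \(j\) the quantum grading of \Cref{Def:cdkh} (extended to \( \cdkh'(D) \), which has the same chain spaces). We check that the differential decomposes as \( d' = d_0 + d_4 \). Here \(d_0\) preserves \(j\) and is exactly the differential of \( \cdkh(D) \). The component \(d_4\) raises \(j\) by \(4\), and consists of the extra terms \( v_{--} \mapsto v_+ \) in \(m'\), \( v_- \mapsto v_{++} \) in \(\Delta'\), and \( \vlm \mapsto 2\vup \) in \(\eta'\). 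The last of these is checked directly: \( p \) goes from \( -1-1 \) (including the \(\lbrace -1 \rbrace\) shift on the \(\ell\) copy) to \( +1 \), and \(i\) goes up by \(1\), so \(j\) rises by \(4\). Hence \( d' \) is filtered. Because \(j\) takes values in a fixed parity class and \(d_4\) shifts by exactly \(4\), the standard argument gives a spectral sequence. After regrading so that the pages are indexed by jumps of \(4\), its \(E_1\) page (the paper's \(E_2\), following Rasmussen's indexing) is \( H(\cdkh(D), d_0) = \dkh(L) \). The filtration is bounded on a finite complex, so the sequence converges to \( H(\cdkh'(D)) = \dkh'(L) \).

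For invariance of every page, the standard lemma suffices: a filtered chain map that induces an isomorphism on some page induces isomorphisms on all subsequent pages. A filtered chain homotopy equivalence therefore induces isomorphisms on every page. It thus suffices to upgrade \Cref{Thm:dleeinvariance} and show that the Reidemeister maps (classical moves and \(R4\), \(R5\) of \Cref{Prop:rm}) and their homotopy inverses are filtered of degree \(0\).

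For \(R4\) and \(R5\), the complexes are isomorphic via the identity on states, which preserves \(j\). The writhe is unchanged: as in the proof of \Cref{Thm:invariance1}, crossing signs and circle counts are unaffected. For the classical moves, we use the maps built from the delooping or Gaussian elimination arguments cited earlier. Each such map is a composite of cobordism maps (built from \(m'\), \(\Delta'\), \(\eta'\), units and counits, which have nonnegative filtration degree) together with inverses of isomorphisms arising from \(d_0\)-components. We will check, as in \cite[Section 6]{Rasmussen2010}, that every term has filtration degree \(\geq 0\), using that the writhe correction in \(j\) exactly compensates the homological shift.

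The main obstacle is this last step. Gaussian elimination inverts a component of \(d'\), not of \(d_0\), and one must see that this inverse is still filtered. We would handle it by noting that the inverted component is a \(j\)-degree-\(0\) isomorphism plus possibly a \(j\)-degree-\(4\) perturbation. The inverse is then a finite Neumann series, which is filtered of degree \(0\). The same reasoning covers Möbius-band edges, since \(\eta'\) has only degree \(0\) and degree \(4\) parts. With this in hand, all pages from \(E_2\) onward are link invariants.
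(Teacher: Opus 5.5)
Your proposal is correct and is essentially the paper's argument. The paper simply invokes Rasmussen's classical proof, noting that \(\eta'\) (like \(m'\) and \(\Delta'\)) is a sum of maps of \(j\)-degree \(0\) and \(4\); you have spelled that argument out, via the filtration, the splitting \(d' = d_0 + d_4\), identification of the first nontrivial page with \(\dkh(L)\), and page-invariance via filtered Reidemeister maps.

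Two small corrections. First, \(j\) does not take values in a single parity class here, since the \(\ell\)-copy is shifted by \(-1\) (compare the support \(\lbrace u+1,u,u-1,u-2\rbrace\) in \Cref{Prop:qsupport}). This is harmless, because \(d'\) preserves \(j \bmod 4\), so \(d_1=d_2=d_3=0\) regardless. Second, for invariance of every page you need the homotopies to be filtered as well, which your Gaussian-elimination construction does provide. Alternatively, observe that the associated graded of each Reidemeister map is the corresponding map on \(\cdkh\), which is an isomorphism on \(\dkh(L)\).
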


	\begin{proof}
		The argument is essentially identical to that given in the classical case, owing to the fact that 
		\( \eta ' \) is the sum of maps of \(j\)-degree \(0\) and \(4\). See again \cite[Section 6]{Rasmussen2010}.
	\end{proof}

	\subsection*{Canonical generators}
	
	Classical Lee homology has a canonical basis consisting of (generators associated to) the orientations of the argument link. This follows from the fact that an oriented smoothing of a link diagram in the plane has the following property. Let \( D \) be a diagram and \(S\) a smoothing of it; we say that \(S\) is \emph{bipartite} if its Tait graph is bipartite. An \(n\)-component classical link has \(2^{n-1}\) bipartite smoothings, that are easily put into bijection with its orientations up to global reversal. 
	
	Doubled Lee homology also has a canonical basis constructed from bipartite smoothings, but the generators are no longer in bijection with the orientations of the argument link. In fact, there are links in \( \rp^3 \) with no bipartite smoothings. As described in \Cref{Eq:2colsmoothing}, a \(2\)-colouring of a link diagram \(D\) prescribes a smoothing of \(D\). It is clear that a \(2\)-coloured smoothing is bipartite, and that a bipartite smoothing can be \(2\)-coloured in exactly two ways (related by switching the colours on all circles)\footnote{Of course, any distinction between \(2\)-coloured and bipartite is almost tautological: a \(2\)-colouring of a smoothing is simply a partition of its circles that certifies the Tait graph as bipartite. We choose to write in terms of \(2\)-colourings as this simplifies exposition.}.
	
	\Cref{Prop:2col} therefore implies that a link of \(n\) components in \( \rp^3 \) has \(0\) or \(2^{n-1}\) bipartite smoothings. We can recover important results regarding the rank and homological degree support of doubled Lee homology with \(2\)-colourings taking the place of orientations. Henceforth we assume that \(2\)-colourings employ the colours {\color{cb3}orange} and {\color{cb2}pink}.
	
	First we define the basis of \( A \) given by \( \lbrace \ru, \rl, \gu, \gl \rbrace \) where
	\begin{equation*}
		\begin{aligned}
			\ru &= \dfrac{\vup + \vum}{2} \\
			\gu &= \dfrac{\vup - \vum}{2}
		\end{aligned}
	\end{equation*}
	and \( \rl \), \( \gl \) are defined similarly (this is a suitably adapted version of the \( \lbrace a, b \rbrace \) basis given in \cite[Section 4.3.1]{Lee2005}). To \(C\) a \(2\)-colouring of a diagram \( D \) we associate the element \( \sgu_C \in \cdkh ' ( D ) \) defined as
	\begin{equation*}
		\sgu_C = x_1 \otimes x_2 \otimes \cdots \otimes x_k
	\end{equation*}
	where \( x_i = \ru \) if the \(i\)-th circle of \( C \) is coloured orange, and \( x_i = \gu \) otherwise. The element \( \sgl_C \) is defined similarly. We often suppress the dependence on the \(2\)-colouring \( C \), writing simply \( \sgu \), \( \sgl \).
	
	The elements \( \sgu \), \( \sgl \) are known as \emph{canonical generators}, terminology justified by the following result.
	
	\begin{theorem}\label{Thm:cangen}
		Let \( D \) be a diagram of \( L \). The set of canonical generators associated to the \(2\)-colourings of \(D\) generate \( \dkh ' ( L ) \). Moreover, the map on \( \dkh ' (L) \) induced by a Reidemeister move sends canonical generators to canonical generators (up to sign).
	\end{theorem}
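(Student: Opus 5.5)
I would follow Lee's original argument, as streamlined by Rasmussen: diagonalise the algebra, compute the homology by a cancellation argument, and then track the canonical generators under the Reidemeister maps.

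\textbf{Step 1: diagonalise the algebra.} First I would compute the maps \( m' \), \( \Delta' \) and \( \eta' \) in the basis \( \lbrace \ru, \gu, \rl, \gl \rbrace \). For the pants maps this is the classical calculation, in which \( r \) and \( p \) are orthogonal idempotents up to scalars. Explicitly, \( m'(r\otimes r) = r \), \( m'(p \otimes p) = -p \), \( m'(r\otimes p) = m'(p \otimes r) = 0 \), \( \Delta'(r) = 2 r\otimes r \) and \( \Delta'(p) = -2 p \otimes p \), up to the normalisation coming from the factor \( \tfrac12 \). The superscripts \( u \), \( \ell \) are untouched by these maps.

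For \( \eta' \) a direct computation gives \( \eta'(\ru) = \rl \) and \( \eta'(\rl) = 2\ru \). Similarly \( \eta'(\gu) = -\gl \) and \( \eta'(\gl) = -2\gu \). Since \( 2 \) is invertible, \( \eta' \) is therefore an isomorphism that preserves the colour label (\( r \) or \( p \)) of the Möbius boundary circle and swaps \( u \) with \( \ell \).

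\textbf{Step 2: split the complex and cancel.} In this basis \( \cdkh'(D) \) splits as a direct sum over ``colour labellings'' of the circles, in the manner of Lee's proof. A merge of two differently coloured circles is zero, as is a split that produces two differently coloured circles.

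I would then run Rasmussen's inductive cancellation argument, or equivalently a Gaussian elimination over the cube. Pick a crossing where the labelled states on the two resolutions are joined by an isomorphism. This happens at a merge of like colours, at a split whose output has like colours, and in every case at an \( \eta' \) edge. The \( \eta' \) case contributes no surviving states, precisely because \( \eta' \) is invertible. Cancelling along such isomorphisms kills every labelled state except those in which, at each crossing, the adjacent arcs carry the pattern dictated by \Cref{Eq:2colsmoothing}. A surviving state is exactly a \( 2 \)-coloured smoothing with its colouring, giving \( \sgu_C \) and \( \sgl_C \).

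In particular, if \( L \) has a degenerate component then no state survives, and the homology vanishes by \Cref{Prop:2col}. I would also check directly that \( \sgu_C \) and \( \sgl_C \) are cycles. Every edge leaving a \( 2 \)-coloured smoothing merges two circles of different colours, so the outgoing maps vanish on these generators.

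\textbf{Step 3: behaviour under Reidemeister moves.} For the second statement, the moves \( R4 \) and \( R5 \) act by chain isomorphisms that visibly carry \( 2 \)-coloured smoothings to \( 2 \)-coloured smoothings. For the classical moves I would use the explicit homotopy equivalences from the proof of \Cref{Thm:dleeinvariance} and compute the images exactly as Rasmussen does for oriented resolutions. Each such move is local, and the \( 2 \)-colouring restricted to the disc behaves like an orientation in the relevant local tangle, so Rasmussen's local computation applies verbatim, up to sign and a unit scalar.

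\textbf{Main obstacle.} I expect the hardest part to be the bookkeeping in the cancellation of Step 2 when \( \eta' \) edges and pants edges interact. In that situation one must verify that the surviving states are exactly the \( 2 \)-coloured ones, uniformly in \( u \) and \( \ell \). I would handle this by organising the complex as \( (\text{labelled states}) \otimes \lbrace u,\ell \rbrace \) and noting that \( \eta' \) acts invertibly in each colour sector.

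A rank count against the number of colourings, \( 2\cdot 2^{n} \) generators counting both \( u \) and \( \ell \), then confirms that these generators also span \( \dkh'(L) \).
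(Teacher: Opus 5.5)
Your proposal is correct and follows essentially the paper's route:
\begin{itemize}
\item pass to the \(\{r,p\}\) basis;
\item use that like-coloured merges and splits, and all \(\eta'\) edges, are invertible, while mixed-colour merges and splits vanish;
\item deduce that 2-coloured states, which have only \(\Delta\) edges in and \(m\) edges out, are cycles;
\item import the classical Lee--Rasmussen arguments, including Rasmussen's local Reidemeister computations.
\end{itemize}

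Two small remarks. First, \(m'(p\otimes p)=p\) and \(\eta'(\gu)=\gl\) carry no minus signs; this is harmless, since only invertibility is used. Second, the Step 2 bookkeeping you flag as the main obstacle becomes trivial once the splitting is made precise. It is indexed not by labellings of circles (which change from vertex to vertex) but by colourings of the edges of \(D\) together with a \(u/\ell\) label. Each summand is a cube over the monochromatic crossings with invertible edge maps, so it is contractible unless every crossing is of 2-colouring type.
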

	
	\begin{proof}
		Again, the lack of auxiliary data employed in the construction of \( \dkh ' ( L ) \) allows us to follow the classical proofs. Additionally, the proofs of analogous statements in the closely related setting of virtual links apply here \cite[Section 3]{Rushworth2017}. We suffice ourselves by observing that the differential components have the following form in the \( \lbrace \ru, \rl, \gu, \gl \rbrace \) basis:
		\begin{equation*}
			\begin{aligned}
				m' (r \otimes r) &= r &\qquad \Delta ' (r) &= 2 r \otimes r \\
				m' (r \otimes p) = m' (p \otimes r) &= 0 &\qquad \Delta ' (p) &= 2 p \otimes p \\
				m' (p \otimes p) &= p & &
			\end{aligned}
		\end{equation*}
		and
		\begin{equation*}
			\begin{aligned}
				\eta ' ( \ru ) &= \rl \\
				\eta ' ( \rl ) &= 2\ru \\
				\eta ' ( \gu ) &= \gl \\
				\eta ' ( \gl ) &= 2\gu.
			\end{aligned}
		\end{equation*}
		The proof proceeds by combining the above with the fact that \(2\)-colourable smoothings have only \( \Delta \) maps incoming and \( m \) maps outgoing. 
	\end{proof}

	Combining this with \Cref{Prop:2col} we obtain the following.
	
	\begin{corollary}\label{Cor:rank}
		Let \( L \) be a link of \(n\) components. Then
		\begin{equation*}
			\text{rank} \left( \dkh ' ( L ) \right) = \left\lbrace \begin{matrix}
				0, ~\text{if \(L\) has a degenerate component} \\
				2^{n+1}, ~\text{otherwise.}
			\end{matrix}\right.
		\end{equation*}
	\end{corollary}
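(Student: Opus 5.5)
The corollary should follow by counting canonical generators and showing they are linearly independent. By \Cref{Thm:cangen}, the canonical generators \( \sgu_C, \sgl_C \), as \(C\) ranges over the \(2\)-colourings of a diagram \(D\) of \(L\), generate \( \dkh'(L) \). By \Cref{Prop:2col}, there are no \(2\)-colourings if \(L\) has a degenerate component, and exactly \(2^n\) otherwise. Each colouring contributes two generators, giving at most \(2 \cdot 2^n = 2^{n+1}\) of them. In the degenerate case the generating set is empty, so \( \dkh'(L) = 0 \) and the rank is \(0\). What remains is to show that in the nondegenerate case these \(2^{n+1}\) classes are linearly independent, so that they form a basis.

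For independence I would argue on the chain level. Swapping the two colours of \(C\) gives a colouring \( \bar C \) with the same \(2\)-coloured smoothing \(S\). So the \(2^n\) colourings produce \(2^{n-1}\) distinct smoothings, each carrying exactly four canonical elements: \( \sgu_C, \sgl_C, \sgu_{\bar C}, \sgl_{\bar C} \). The four elements at one vertex are linearly independent in \(A^k\), because \( \lbrace \ru, \rl, \gu, \gl \rbrace \) is a basis of \(A\) when \(2\) is invertible. Elements at distinct vertices lie in distinct direct summands. Hence the whole collection is linearly independent in \( \cdkh'(D) \), and it remains to see that no combination of them is a boundary.

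To rule out boundaries, I would use the structural fact quoted in the proof of \Cref{Thm:cangen}: a \(2\)-coloured smoothing has only \( \Delta' \) edges incoming and only \(m'\) edges outgoing (no \( \eta' \) edges). Moreover, neighbouring circles at each crossing carry different colours. So each canonical element is a cycle, since \(m'(r\otimes p)=0\). Suppose \( \sum c_\alpha \mathfrak{s}_\alpha = d(y) \). I would project onto the summand of a single coloured smoothing \(S\). The incoming components are \( \Delta' \) maps, whose images in the \( \lbrace r,p\rbrace \) basis lie in the span of tensors with equal labels on the two split circles. Canonical elements of \(S\) carry opposite labels on circles adjacent across any crossing. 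Provided the image of each incoming \( \Delta' \) lands in a complementary subspace, the component of the boundary at \(S\) cannot contain canonical elements. This is the classical argument of Lee and Rasmussen, and the \( u/\ell \) superscripts are preserved by \(m', \Delta'\), so they play no role here. If the direct chain-level argument becomes messy, I would fall back on following \cite[Section 3]{Rushworth2017} verbatim.

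The main obstacle I expect is exactly this non-boundary step. An incoming edge may come from a smoothing whose adjacent circles coincide, so the \( \Delta' \) image must be analysed carefully. An alternative route is to first prove the rank formula for a simple representative (for example a crossingless or nearly crossingless diagram in each class) and then transport it using invariance (\Cref{Thm:dleeinvariance}). However, for general \(L\) this route still needs the independence argument above.
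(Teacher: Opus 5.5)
Your proposal is correct and follows the paper's route, which simply combines \Cref{Thm:cangen} with \Cref{Prop:2col}. Your chain-level independence argument supplies a step the paper leaves implicit, since \Cref{Thm:cangen} as stated only asserts generation. The proviso you were unsure about holds automatically: every crossing of a \(2\)-coloured smoothing joins two distinct circles of different colours. So every incoming edge is a \( \Delta' \) split, never an \( \eta' \) edge, and its image is spanned by tensor-basis elements with equal labels on the split pair; these are disjoint from the canonical basis tensors.

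One small correction: a coloured smoothing need not carry exactly four canonical elements. For example, the crossingless diagram of a two-component unlink has a single smoothing carrying all eight. Your argument only needs that distinct colourings give distinct basis tensors, and that does hold.
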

	
	Notice that the rank is \( 2^{n+1} \), rather than \( 2^n\) as in the classical case, as to each \(2\)-colouring \( C \) we associate two canonical generators, \( \sgu_C \) and \(\sgl_C\).

	Given a \(2\)-colouring \(C\) of a diagram \(D\) we declare a crossing of \(D\) to be \emph{odd} if it is in the configuration
	\begin{equation}\label{Eq:odd_crossing}
		\begin{matrix}
		\includegraphics[scale=1]{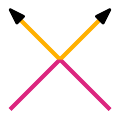}
		\end{matrix}
	\end{equation}
	up to globally switching the colours. Otherwise the crossing is declared to be \emph{even}. Notice that for crossings between distinct link components this designation depends on the \(2\)-colouring. 
	
	\begin{corollary}\label{Cor:homsupport}
		Let \(D\) be a diagram of a link \(L\). Given \( C \) a \(2\)-colouring of \(D\), denote by \( n^o_+ \) and \(n^o_-\) the number of odd positive and odd negative crossings, respectively.  The homological degree support of \( \dkh ' (L) \) is given by
		\begin{equation*}
			\left\lbrace n^o_+ - n^o_- ~|~ \text{\(C\) a \(2\)-colouring of \(D\)} \right\rbrace.
		\end{equation*}
	\end{corollary}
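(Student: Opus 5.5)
By \Cref{Thm:cangen}, \( \dkh'(L) \) is generated by the classes of the canonical generators \( \sgu_C, \sgl_C \) as \(C\) ranges over the \(2\)-colourings of \(D\). Each such generator lives in the single vertex of \( \llbracket D \rrbracket \) given by the \(2\)-coloured smoothing of \(C\) (\Cref{Eq:2colsmoothing}). So it is homogeneous in homological degree, and that degree is the height \( |S_C| \) of this smoothing. The plan is therefore to compute \( |S_C| \) crossing by crossing and show that it equals \( n^o_+ - n^o_- \).

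The key local step compares, at each crossing, the resolution chosen by the colouring convention \Cref{Eq:2colsmoothing} with the crossing's sign and parity. I expect the pictures to give the following rule: at an even crossing the convention picks the resolution that matches the orientation-type smoothing, namely the \(0\)-resolution at a positive crossing and the \(1\)-resolution at a negative crossing. At an odd crossing it picks the other resolution. Granting this, the number of \(1\)-resolutions in \(S_C\) is \( n^e_- + n^o_+ \), where \( n^e_- \) counts even negative crossings. Subtracting \( n_- = n^e_- + n^o_- \) gives
\begin{equation*}
|S_C| = n^o_+ + n^e_- - n^e_- - n^o_- = n^o_+ - n^o_- .
\end{equation*}
This rule is the step I expect to be the main obstacle, because it depends on the specific figures. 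I would check it by inspecting the four cases (positive or negative crossing, even or odd colouring) against the local picture. I would also confirm that it is invariant under globally switching the colours, since the definition of odd crossing is only up to that switch, and the colour-switched configuration yields the same smoothing. If the convention turns out to be reversed, so that the even resolution is the opposite one, a sign flip would appear. In that case I would reconcile it with the example of \(2_1\), whose homology is supported in degree \(-2\), and fix the convention accordingly.

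To finish, the support is exactly this set and not merely contained in it. By \Cref{Cor:rank}, when \(L\) has no degenerate component the \( 2^{n+1} \) canonical generators form a basis, so each one represents a nonzero class and every value \( n^o_+ - n^o_- \) is attained. If \(L\) has a degenerate component, \Cref{Prop:2col} says there are no \(2\)-colourings, so both the homology and the set are empty and the statement holds trivially.
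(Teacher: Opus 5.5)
Your proposal is correct and follows essentially the same route as the paper: its proof is exactly your crossing-by-crossing count. It uses the rule you anticipated, namely that a crossing takes its \(1\)-resolution in the \(2\)-coloured smoothing if and only if it is even negative or odd positive, giving \( i(\sgu_C) = i(\sgl_C) = n^o_+ + n^e_- - n_- = n^o_+ - n^o_- \). Your extra remark that every value is actually attained, because the \(2^{n+1}\) canonical generators form a basis by \Cref{Thm:cangen} and \Cref{Cor:rank}, is a step the paper leaves implicit.
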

	
	Identifying crossings as even or odd in this way is an instance \emph{parity}; see \cite{Ilyutko2013,Manturov2010} and references therein. The quantity \( n^o_+ - n^o_- \) is an example of an \emph{odd writhe} in this context.

	\begin{proof}
		Consulting \Cref{Eq:odd_crossing} we see that a crossing is resolved into its \(1\)-resolution in the the \(2\)-coloured smoothing associated to \(C\) if and only if it is an even negative or odd positive crossing. Setting \(n_-\) to the total number of negative crossings and \( n^e_- \) the number of even negative crossings of \( D \), it follows that
		\begin{equation*}
			i \left( \sgu_C \right) = i \left( \sgl_C \right) = n^o_+ + n^e_- - n_- = n^o_+ - n^o_-.
		\end{equation*}
	\end{proof}
	
	\section{Extending Bar-Natan homology}\label{Sec:dbn}
	As with its classical counterpart, doubled Lee homology is restricted to ground rings in which \(2\) is invertible. Bar-Natan introduced a companion theory to Lee homology with coefficients in \( \mathbb{Z}_2 \) \cite[Section 9.3]{Bar-Natan2005}. Here we extend Bar-Natan's theory to \(\rp^3 \), and compare it to the extension previously defined by Chen \cite{Chen2025}.
	
	\begin{definition}\label{Def:cdbn}
		Let \( D \) be a link diagram. Setting \( R = \mathbb{Z}_2 \), denote by \( \cdbn (D) \) the chain complex with chain spaces equal to those of \( \cdkh (D) \) and differential components:
		\begin{align*}
		m_2 : A^2 &\rightarrow A & \Delta_2 : A &\rightarrow A^2 \\
		v_{++} &\mapsto v_+ & v_+ &\mapsto v_{+-} + v_{-+} \\
		v_{+-}, v_{-+} &\mapsto v_{-} & v_{-} &\mapsto v_{--} \\
		v_{--} &\mapsto v_- &
		\end{align*}
		(where we have again suppressed superscripts) and
		\begin{equation*}
		\begin{aligned}
		\eta_2 : A &\rightarrow A \\
		\vup &\mapsto \vlp \\
		\vlp &\mapsto \vup \\
		\vum &\mapsto \vlm \\
		\vlm &\mapsto \vum.
		\end{aligned}
		\end{equation*}
		Notice that these maps possess terms of \(j\)-degree \(2\), so that the \(j\)-grading of \( \cdkh (D) \) must be weakened to a filtration of \( \cdbn (D) \). The homological grading is defined as is done for \( \cdkh (D) \).
	\end{definition}

	\begin{theorem}\label{Thm:dbninvariance}
		Let \( D \) be a diagram of a link \(L\). The object \( \cdbn ( D ) \) is a chain complex, the chain homotopy equivalence class of which is an invariant of \(L\).
	\end{theorem}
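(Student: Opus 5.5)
The proof splits into two parts: checking that \( d^2 = 0 \), and then checking invariance under the moves of \Cref{Prop:rm}. As with \Cref{Prop:d2} and \Cref{Thm:dleeinvariance}, no auxiliary decoration is placed on diagrams or smoothings. So both parts reduce to local algebraic identities among \( m_2 \), \( \Delta_2 \) and \( \eta_2 \).

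For \( d^2 = 0 \), note that we work over \( \mathbb{Z}_2 \), so signs are irrelevant and it suffices to show that every face of \( \llbracket D \rrbracket \) commutes. Faces built only from pairs of pants involve only \( m_2 \) and \( \Delta_2 \), acting identically on both the \( u \) and \( \ell \) copies. These are exactly Bar-Natan's classical maps (the Frobenius algebra \( \mathbb{Z}_2[X]/\langle X^2 + X \rangle \) in the basis \( v_+ = 1 \), \( v_- = X \)), so they commute by the classical argument.

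The remaining faces involve at least one Möbius band, i.e. the faces displayed in the proof of \Cref{Prop:d2}. Here the key observation is that \( \eta_2 \) simply swaps the superscripts \( u \leftrightarrow \ell \) across the whole tensor string and acts as the identity on subscripts. It therefore commutes with \( m_2 \) and \( \Delta_2 \), and \( \eta_2 \circ \eta_2 = \mathrm{id} \). Using these two facts, I will check each face directly:
\begin{itemize}
\item a face with two \( \eta_2 \) edges in parallel commutes because \( \eta_2 \) commutes with itself;
\item a face mixing \( \eta_2 \) with \( m_2 \) or \( \Delta_2 \) commutes because \( \eta_2 \) is natural with respect to them;
\item a face in which a circle passes through two Möbius bands returns to the same number of circles, and both composites equal the identity.
\end{itemize}
The one point needing care is the face where one side is \( \eta_2\eta_2 \) and the other is \( \Delta_2 \) followed by \( m_2 \) (or the reverse). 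Its commutativity amounts to the identity \( m_2\circ\Delta_2 = 0 \), which holds over \( \mathbb{Z}_2 \) since \( m_2\Delta_2(v_+) = 2v_- = 0 \). Equally, \( \eta_2^2 = \mathrm{id} \) must then match the other composite.

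I expect this last face to be the main obstacle, and I would treat it first. If a genuine \( \eta\eta \) versus \( m\Delta \) face occurs, it commutes only if both composites agree, and here \( \eta_2^2 = \mathrm{id} \) whereas \( m_2\Delta_2 = 0 \). So the plan is to enumerate the admissible faces of \( \llbracket D \rrbracket \) in \( \rp^2 \) carefully. I will show that whenever two crossings both act by Möbius bands on a circle, the alternative path also consists of Möbius-type edges, or else is \( m\Delta \) together with a corresponding pair on the other path. This comparison is exactly the one already made for \( \eta \) in \Cref{Prop:d2} (where \( \eta^2 \) and \( m\Delta \) both yield \( 2v_- \) terms), and the \( \mathbb{Z}_2 \) reduction of that argument should go through.

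For invariance, the moves \( R4 \) and \( R5 \) change neither circle counts nor edge types, so they induce isomorphisms of complexes as in \Cref{Thm:invariance1}. For R1--R3 I will use delooping and Gaussian elimination in the style of Bar-Natan. Delooping a trivial circle uses a splitting \( A \cong A\{1\}\oplus A\{-1\} \) adapted to \( \mathbb{Z}_2[X]/\langle X^2+X\rangle \), via the basis \( v_+, v_- \) with the classical Bar-Natan delooping maps applied in both the \( u \) and \( \ell \) copies. Since \( \eta_2 \) commutes with everything relevant, these homotopies extend verbatim to smoothings containing Möbius-type edges elsewhere in the cube. The resulting equivalences are filtered, as all maps have nonnegative filtration degree, so this also gives filtered invariance.
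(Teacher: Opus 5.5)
Your overall route is the one the paper intends: check the faces of \( \llbracket D \rrbracket \) over \( \mathbb{Z}_2 \), then rerun the classical delooping argument. The paper's proof simply defers to the doubled Lee case and from there to the classical proof. Your observation that \( \eta_2 \) is the global \( u \leftrightarrow \ell \) swap, and so commutes with \( m_2 \) and \( \Delta_2 \), correctly handles every face containing a M\"{o}bius band except one.

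\textbf{The gap.} The remaining face has \( \eta_2 \circ \eta_2 \) on one side and \( m_2 \circ \Delta_2 \) on the other. You propose to show it never occurs or is cancelled by a pairing, but it does occur and nothing cancels it. Take a two-sided circle \( C \) bounding a disc. Let \( b_1 \) be a chord inside the disc. Let \( b_2 \) run along the core of the complementary M\"{o}bius band, with its endpoints on the two arcs of \( C \) separated by \( b_1 \). Then:
\begin{itemize}
\item \( b_1 \) splits \( C \);
\item \( b_2 \) acts on \( C \) by \( \eta \);
\item \( b_2 \) merges the two halves produced by \( b_1 \);
\item \( b_1 \) acts on the \( \eta \)-image by \( \eta \) again.
\end{itemize}
A two-crossing diagram realises this. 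It is exactly the face that forces \( \eta^2 = m\Delta \) in \Cref{Prop:d2} (both are multiplication by \( 2X \)), and \( \eta'^2 = m'\Delta' \) in the Lee case. So no enumeration of faces will remove it.

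\textbf{The missing idea.} What you need is the identity \( m_2 \circ \Delta_2 = \mathrm{id} = \eta_2 \circ \eta_2 \). In \( \mathbb{Z}_2[X]/\langle X^2+X \rangle \) the composite \( m\Delta \) is multiplication by \( 2X+1 = 1 \). This is precisely why \( \eta_2 \) is chosen to be the swap.

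Your value \( m_2\Delta_2(v_+) = 0 \) comes from the printed formula \( \Delta_2(v_+) = v_{+-}+v_{-+} \). That formula omits the term \( v_{++} \) of Bar-Natan's comultiplication, which is forced by the Frobenius relation with \( m_2 \) and the counit. The omission must be a typo: with it, even purely classical faces fail. For example, \( \Delta_2 m_2(v_-\otimes v_+) = v_{--} \), whereas \( (m_2\otimes 1)(1\otimes\Delta_2)(v_-\otimes v_+) = v_{--}+v_{-+} \).

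So your two claims are inconsistent: that the pants maps are ``exactly Bar-Natan's'', and that \( m_2\Delta_2 = 0 \). Even the printed maps give \( m_2\Delta_2(v_-) = v_- \neq 0 \). With \( \Delta_2(v_+) = v_{+-}+v_{-+}+v_{++} \) one gets \( m_2\Delta_2(v_\pm) = v_\pm \), the face commutes, and the rest of your argument goes through.

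A minor point: two essential bands on the same two-sided circle give a \( \Delta\eta \) versus \( \Delta\eta \) face ending in two circles. It is not a face whose composites are the identity.
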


	\begin{proof}
		This verification is essentially identical to that of the analogous result for \( \cdkh ' (D) \).
	\end{proof}

	Given \(D\) a diagram of \(L\) we define the \emph{doubled Bar-Natan homology of \(L\)} to be
	\begin{equation*}
	\dbn ( L ) \coloneqq H \left( \cdbn (D) \right).
	\end{equation*}
	
	There is a natural analogue of \Cref{Thm:leess} in this setting.
	
	\begin{theorem}\label{Thm:dbnss}
		Let \( L \) be a link. There is a spectral sequence with \( E_2 \) page \( \dkh (L) \) that abuts to \( \dbn ( L ) \), each page of which is an invariant of \(L\).
	\end{theorem}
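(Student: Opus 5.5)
The plan is to mimic the classical construction of the Bar-Natan spectral sequence, using the same filtration argument as in \Cref{Thm:leess}. First I will check that every differential component of \( \cdbn(D) \) splits as a sum of a part of \(j\)-degree \(0\) and a part of \(j\)-degree \(2\), with the degree-\(0\) part equal to the corresponding component of \( \cdkh(D) \) reduced mod \(2\). For \( m_2 \) and \( \Delta_2 \) this is the classical observation: the only new term is \( v_{--} \mapsto v_- \), which raises \(p\) by \(1\) while the homological degree rises by \(1\), so \(j\) rises by \(2\). For \( \eta_2 \), the terms \( \vup \mapsto \vlp \) and \( \vum \mapsto \vlm \) are the degree-\(0\) part. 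Over \( \mathbb{Z}_2 \) the term \( \vlp \mapsto 2\vum \) of \( \eta \) vanishes, so the Khovanov part of \( \eta \) mod \(2\) is exactly these two terms. The terms \( \vlp \mapsto \vup \) and \( \vlm \mapsto \vum \) raise \(p\) by \(1\), since the lower copy is shifted by \(-1\). Together with the shift in homological degree, they therefore have \(j\)-degree \(2\).

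Hence the differential is filtered with respect to the quantum filtration \( \mathcal{F}_j = \bigoplus_{j' \ge j} \cdkh_{*,j'}(D) \). The associated graded complex is \( \cdkh(D) \otimes \mathbb{Z}_2 \) with its differential, which is a chain complex by \Cref{Prop:d2}. Since \(D\) has finitely many smoothings, the filtration is bounded. The standard spectral sequence of a bounded filtered complex therefore converges to \( H(\cdbn(D)) = \dbn(L) \). Its \(E_1\) page is the homology of the associated graded complex, which is \( \dkh(L) \) over \( \mathbb{Z}_2 \).

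One indexing subtlety: \(j\) and \(i\) have the same parity on each summand, and all differential components have even \(j\)-degree. So, as in \cite[Section 6]{Rasmussen2010} and \cite{Bar-Natan2005}, I will regrade the filtration by \(j/2\). With that regrading the first nontrivial differential is the one the paper calls \(d_2\), and the page identified with \( \dkh(L) \) is labelled \(E_2\), as in \Cref{Thm:leess}.

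For invariance of every page, I will show that the chain homotopy equivalences realising \Cref{Thm:dbninvariance} are filtered of degree \(0\) and induce the Khovanov-level equivalences of \Cref{Thm:invariance1} on associated graded pieces. By the standard comparison theorem, a filtered map inducing an isomorphism on \(E_2\) induces isomorphisms on all later pages. The Reidemeister moves \(R4\) and \(R5\) give chain isomorphisms that visibly preserve \(p\) and the homological degree, so they are filtered.

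The main obstacle is the classical moves, where I need the delooping and Gaussian elimination maps to be filtered. These moves are local to a disc, so the only components appearing in the local cancellations are \( m_2 \) and \( \Delta_2 \), and I will reuse Bar-Natan's filtered homotopy equivalences from \cite[Section 9.3]{Bar-Natan2005} verbatim. The \( \eta_2 \) maps appear only in the part of the cube that is unchanged, so they commute with the local maps by the same face-commutativity used in \Cref{Prop:d2}, now verified with \( \eta_2 \). The point to check carefully is that this commutation is strict and not merely up to homotopy; if it is only up to homotopy, I would instead verify filtration-preservation of the explicit homotopies directly.
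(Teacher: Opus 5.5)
Your route is the paper's own. The paper proves this only as the analogue of \Cref{Thm:leess}: each differential component splits into parts of $j$-degree $0$ and $2$, the associated graded complex is the mod-$2$ doubled Khovanov complex, and invariance of the pages comes from filtered Reidemeister equivalences. Your degree check is right. (Bar-Natan's comultiplication actually has $\Delta_2(v_+) = v_{+-}+v_{-+}+v_{++}$, and the $v_{++}$ term is missing from \Cref{Def:cdbn} as printed. It also has $j$-degree $2$, so nothing changes.)

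Your indexing paragraph, however, is wrong. It is false that $j$ and $i$ always have the same parity. We have $j-i = p + wr(D)$, and because of the shift $\lbrace -1 \rbrace$, at any smoothing the $\ell$-copy has $p$ of the opposite parity to the $u$-copy. More importantly, regrading by $j/2$ moves you away from the stated labelling. The degree-$2$ terms become filtration degree $1$, so $\dkh(L)$ is then $E_1$ and $d_1$ is the first possibly nonzero differential. Hence $E_2 = H(\dkh(L), d_1) \neq \dkh(L)$ whenever that differential is nonzero, as in the $4_1$ example of \Cref{Sec:dbn}. No regrading is needed: with the $j$-filtration itself no component has odd degree, so $d_1=0$ and $E_1=E_2=\dkh(L;\mathbb{Z}_2)$.

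The Reidemeister step rests on a false premise, namely that the $\eta_2$ maps occur only in the unchanged part of the cube. Whether an edge at a crossing inside the disc of the move is a pair of pants or a M\"obius band depends on how the four endpoints are joined outside the disc. In $\rp^2$ that outside region is a M\"obius band, so the interleaved pairing (top-left with bottom-right, top-right with bottom-left) can occur. With that pairing, both edges of an R2 cube incident to the smoothing in which the strands pass straight through are $\eta_2$ maps. Moreover, since the doubled construction is not a TQFT (\Cref{Remark:1}), Bar-Natan's planar-algebra equivalences cannot be imported verbatim.

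The repair is to run delooping and Gaussian elimination on the global cube. The components you invert are always $m_2$ or $\Delta_2$ maps involving the small local circle. These are isomorphisms on the associated graded with filtered inverses, so the resulting homotopy equivalences are filtered of degree $0$ whatever the other components are, including any local $\eta_2$ maps. You still have to check, with those $\eta_2$ edges present, that the surviving complex is $\cdbn$ of the new diagram. The paper likewise leaves this check to the classical proof.
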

	
	We refer to this sequence as the \emph{doubled Bar-Natan spectral sequence}.
	
	\subsection*{Canonical generators}
	
	Doubled Bar-Natan is structurally very similar to doubled Lee homology. Here we establish the analogue of \Cref{Thm:cangen} for doubled Bar-Natan homology (leaving concordance invariance to \Cref{Sec:properties}).
	
	The set \( \lbrace \ru, \rl, \gu, \gl \rbrace \) of \Cref{Sec:dlee} is not a basis if coefficients are taken in \( \mathbb{Z}_2 \). We therefore use the following basis \( \lbrace \au, \al, \bu, \bl \rbrace \) where
	\begin{equation*}
		\begin{aligned}
			\au &= \vup + \vum \\
			\bu &= \vum
		\end{aligned}
	\end{equation*}
	and \( \al \), \( \bl \) are defined similarly (see \cite[Section 3]{Turner2006}).
	
	We define the \emph{canonical generators} of \( \dbn (L) \) in almost identical fashion to those of \( \cdkh ' ( L ) \). That is, to \(C\) a \(2\)-colouring of a diagram \( D \) we associate the element \( \sgu_C \in \cdbn ( D ) \), given by
	\begin{equation*}
	\sgu_C = x_1 \otimes x_2 \otimes \cdots \otimes x_k
	\end{equation*}
	where \( x_i = \au \) if the \(i\)-th circle of \( C \) is coloured orange, and \( x_i = \bu \) otherwise. The element \( \sgl_C \) is defined similarly. As in \Cref{Sec:dlee} we often suppress the dependence on the \(2\)-colouring \( C \), writing simply \( \sgu \), \( \sgl \).
	
	\begin{theorem}\label{Thm:cangen2}
		Let \( D \) be a diagram of \( L \). The set of canonical generators associated to the \(2\)-colourings of \(D\) form a basis for \( \dbn ( L ) \). Moreover, the map on \( \dbn (L) \) induced by a Reidemeister move sends canonical generators to canonical generators.
	\end{theorem}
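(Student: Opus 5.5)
The plan is to rewrite the complex \( \cdbn(D) \) in the basis \( \lbrace \au, \al, \bu, \bl \rbrace \), in which the differential becomes \emph{diagonal}, and then to copy the structure of the proof of \Cref{Thm:cangen}: show that each \( \sgu_C, \sgl_C \) is a cycle, that distinct ones give independent classes, and that nothing else survives. The first step is to compute \( m_2 \), \( \Delta_2 \), \( \eta_2 \) in the new basis. The aim is formulas of the form
\begin{equation*}
m_2(a\otimes a) = a, \quad m_2(a\otimes b) = m_2(b \otimes a) = 0, \quad m_2(b\otimes b) = b, \quad \Delta_2(a) = a\otimes a, \quad \Delta_2(b) = b \otimes b,
\end{equation*}
together with \( \eta_2(\au)=\al \), \( \eta_2(\al)=\au \), \( \eta_2(\bu)=\bl \), \( \eta_2(\bl)=\bu \). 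The \( \eta_2 \) formulas are immediate, since \( \eta_2 \) merely swaps the superscripts. For \( m_2 \) and \( \Delta_2 \) I will check directly against \Cref{Def:cdbn}. If the diagonal form fails with \( a=v_++v_- \), \( b=v_- \), I will instead use the idempotent basis of the underlying Frobenius algebra (the analogue of Turner's basis), which is what the argument actually needs.

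Next, let \( S_C \) be the \(2\)-coloured smoothing of a \(2\)-colouring \(C\). At every crossing the two arcs of \( S_C \) carry different colours, so they lie on different circles. Hence every edge of \( \llbracket D\rrbracket \) leaving \( S_C \) is a merge of an orange and a pink circle, and every edge entering \( S_C \) is a split of some circle into an orange and a pink circle; in particular no \( \eta_2 \) edge is incident to \( S_C \). The outgoing components therefore send \( \sgu_C \) and \( \sgl_C \) to \( m_2(a\otimes b)\otimes\cdots = 0 \), so both are cycles. For the incoming components, \( \Delta_2 \) produces only \( a\otimes a \) or \( b\otimes b \) on the two new circles, while \( \sgu_C \) and \( \sgl_C \) carry \( a\otimes b \) there. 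A projection argument onto the \( S_C \)-summand, with the basis tensor \( \sgu_C \) as a coordinate, then shows that no nonzero combination of canonical generators is a boundary. Different colourings either give different smoothings or differ by the global swap \( a\leftrightarrow b \), and the \( u \) and \( \ell \) copies are distinct summands. Together these give linear independence of the \( 2\cdot|\lbrace 2\text{-colourings}\rbrace| \) classes.

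The main obstacle is spanning, that is, the upper bound \( \dim \dbn(L) \le 2^{n+1} \) (or \(0\) in the degenerate case, by \Cref{Prop:2col}). My first attempt is to decompose \( \cdbn(D) \), in the diagonal basis, as a direct sum of subcomplexes indexed by labellings of circles by \(a/b\) together with the \(u/\ell\) superscript. Since \( m_2 \) and \( \Delta_2 \) preserve labels and \( \eta_2 \) is an isomorphism \( u \leftrightarrow \ell \), I would then Gaussian-eliminate the isomorphism edges (merges of equal labels, \( \eta_2 \) edges, splits into equal labels), following Lee's and Turner's argument for classical Bar-Natan homology. After cancellation, the only surviving generators should be labellings in which every crossing joins differently labelled circles. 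Such a labelling is exactly a \(2\)-colouring with a choice of superscript, and no generator survives if a component is degenerate.

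If the cancellation bookkeeping becomes unwieldy, the fallback is to adapt the corresponding argument for virtual links in \cite[Section 3]{Rushworth2017} verbatim. For the Reidemeister statement, I will take the explicit chain homotopy equivalences from the proof of \Cref{Thm:dbninvariance}, which are the classical delooping maps. I will evaluate them on \( \sgu_C \) in the diagonal basis, exactly as in the classical computation of Rasmussen and Turner, and check that the result is the canonical generator of the corresponding colouring of the new diagram. Over \( \mathbb{Z}_2 \) there is no sign ambiguity.
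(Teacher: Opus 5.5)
Your proposal is correct and follows the paper's route: the paper's proof is just the classical Lee--Turner argument run in the \(\lbrace a,b\rbrace\) basis, with the single new input that \(\eta_2\) exchanges the \(u\) and \(\ell\) copies of \(a\) and \(b\), and your cycle, independence, spanning and Reidemeister steps expand exactly that. Two small fixes are needed: the diagonal form \(\Delta_2(a)=a\otimes a\) requires Turner's comultiplication \(\Delta_2(v_+)=v_{+-}+v_{-+}+v_{++}\) (the \(v_{++}\) term appears to have been dropped from the displayed definition, which as printed fails the Frobenius relation with \(m_2\) on \(v_+\otimes v_-\), and your fallback would not help because \(a,b\) already are the idempotents of \(m_2\)); and in the spanning step the splitting must be indexed by \(a/b\)-labellings of the arcs of \(D\) alone, not by the superscript (which \(\eta_2\) swaps), after which each summand containing a crossing whose four arcs share a label is the cone of an isomorphism (merge, split or \(\eta_2\)) and hence acyclic, leaving only the summands \(\langle \mathfrak{s}^{u}_C, \mathfrak{s}^{\ell}_C\rangle\) with zero differential.
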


	\begin{proof}
		We can proceed exactly as in the classical case, in light of the following observation:
		\begin{equation*}
			\begin{aligned}
				\eta_2 ( \au ) &= \al \\
				\eta_2 ( \al ) &= \au \\
				\eta_2 ( \bu ) &= \bl \\
				\eta_2 ( \bl ) &= \bu.
			\end{aligned}
		\end{equation*}
	\end{proof}

	It follows that the statements obtained from \Cref{Cor:rank,Cor:homsupport} by replacing \( \dkh ' (L) \) with \( \dbn (L) \) hold also.
	
	\begin{corollary}\label{Cor:rankbn}
		Let \( L \) be a link of \(n\) components. Then
		\begin{equation*}
		\text{rank} \left( \dbn ( L ) \right) = \left\lbrace \begin{matrix}
		0, ~\text{if \(L\) has a degenerate component} \\
		2^{n+1}, ~\text{otherwise.}
		\end{matrix}\right.
		\end{equation*}
	\end{corollary}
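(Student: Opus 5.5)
This follows directly from \Cref{Thm:cangen2} combined with \Cref{Prop:2col}, in the same way that \Cref{Cor:rank} was deduced from \Cref{Thm:cangen}. Fix a diagram \(D\) of \(L\). By \Cref{Thm:cangen2}, the canonical generators \(\sgu_C, \sgl_C\), as \(C\) ranges over the \(2\)-colourings of \(D\), form a \(\mathbb{Z}_2\)-basis of \(\dbn(L)\). So the rank is the number of distinct canonical generators, and the task reduces to counting them.

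If \(L\) has a degenerate component, then by \Cref{Prop:2col} the diagram has no \(2\)-colourings. The basis is therefore empty, and \(\dbn(L) = 0\).

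Otherwise \Cref{Prop:2col} gives exactly \(2^n\) \(2\)-colourings. Each colouring \(C\) contributes the two elements \(\sgu_C\) and \(\sgl_C\), so the count is \(2 \cdot 2^n = 2^{n+1}\), provided all these elements are distinct.

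Distinctness is the one place where care is needed, and it is also where the argument departs from the Lee case. There, a colouring and its global colour swap give different elements because \(r\) and \(p\) are interchanged. Here the basis \(\lbrace a, b \rbrace\) is not symmetric, and I would check distinctness directly:
\begin{itemize}
\item A colouring \(C\) and its swap \(\overline{C}\) prescribe the same smoothing, since the convention of \Cref{Eq:2colsmoothing} is invariant under globally switching colours. However, \(\sgu_C\) and \(\sgu_{\overline{C}}\) exchange \(\au\) and \(\bu\) on every circle, so they are distinct pure tensors in the basis \(\lbrace \au, \al, \bu, \bl \rbrace^{\otimes k}\).
\item Colourings that differ by recolouring only some components may prescribe different smoothings. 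In that case the generators lie in different direct summands of the chain space.
\item If they prescribe the same smoothing, they differ in the \(a/b\) labels on some circle, so again the generators are distinct basis tensors.
\item Finally, \(\sgu_C\) lies in the unshifted summand and \(\sgl_C\) in the shifted summand of \(A^k\), so \(\sgu_C \neq \sgl_C\).
\end{itemize}

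All \(2^{n+1}\) chain-level elements are therefore distinct. Since \Cref{Thm:cangen2} asserts that they form a basis of the homology, and not merely a spanning set, their classes are linearly independent. Hence \(\dbn(L)\) has rank exactly \(2^{n+1}\).
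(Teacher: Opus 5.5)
Your proposal is correct and follows the paper's route: the paper obtains this corollary in the same way as \Cref{Cor:rank}, by combining the basis statement of \Cref{Thm:cangen2} with the count of \(2\)-colourings in \Cref{Prop:2col}, where each colouring \(C\) contributes the two generators \( \sgu_C \) and \( \sgl_C \). Your chain-level distinctness check is harmless but is not what carries the count; the \(2^{n+1}\) homology classes are distinct and independent because \Cref{Thm:cangen2}, read as a statement about the indexed family of classes, says they form a basis.
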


	\begin{corollary}\label{Cor:homsupportbn}
		Let \(D\) be a diagram of a link \(L\). Given \( C \) a \(2\)-colouring of \(D\), denote by \( n^o_+ \) and \(n^o_-\) the number of odd positive and odd negative crossings, respectively.  The homological degree support of \( \dbn (L) \) is given by
		\begin{equation*}
		\left\lbrace n^o_+ - n^o_- ~|~ \text{\(C\) a \(2\)-colouring of \(D\)} \right\rbrace.
		\end{equation*}
	\end{corollary}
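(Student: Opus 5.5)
The plan is to mirror the proof of \Cref{Cor:homsupport}, with \Cref{Thm:cangen2} taking the place of \Cref{Thm:cangen}. By \Cref{Thm:cangen2}, \( \dbn (L) \) has a basis consisting of the classes of the canonical generators \( \sgu_C \) and \( \sgl_C \), where \( C \) ranges over the \(2\)-colourings of \(D\). Each such generator is a single element lying in one chain space. Specifically, it lives in the module assigned to the \(2\)-coloured smoothing \(S_C\) of \Cref{Eq:2colsmoothing}.

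Two consequences follow. First, each canonical generator is homogeneous in homological degree, with degree \( |S_C| \). Second, because these classes form a basis, none of them is zero. Hence the homological support of \( \dbn (L) \) is exactly the set \( \{ |S_C| \} \) as \(C\) varies. If \(L\) has a degenerate component, then \Cref{Prop:2col} says there are no \(2\)-colourings. The set is then empty and \( \dbn (L) = 0 \), consistent with \Cref{Cor:rankbn}.

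It remains to compute \( |S_C| \). I would inspect \Cref{Eq:2colsmoothing} together with the odd/even configuration \Cref{Eq:odd_crossing}. This shows that the \(2\)-coloured smoothing takes the \(1\)-resolution exactly at the odd positive crossings and the even negative crossings. Since the smoothing rule depends only on the colours and the crossing sign, this is the same local check used in the Lee case. Therefore
\begin{equation*}
	|S_C| = n^o_+ + n^e_- - n_- = n^o_+ - n^o_-,
\end{equation*}
using \( n_- = n^e_- + n^o_- \).

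The only point that needs care is that over \( \mathbb{Z}_2 \) we need the canonical generators to be nonzero, homogeneous homology classes. A mere generating set would not rule out some class vanishing and dropping a degree from the support. This is supplied by the basis statement of \Cref{Thm:cangen2}, so I expect no real obstacle; the argument is otherwise a direct transcription of the Lee case.
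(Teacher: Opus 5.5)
Your proposal is correct and follows the paper's route: the paper obtains this corollary by combining the basis statement of \Cref{Thm:cangen2} with the height computation from the proof of \Cref{Cor:homsupport}. That computation is that the \(2\)-coloured smoothing takes the \(1\)-resolution exactly at the odd positive and even negative crossings, giving height \( n^o_+ - n^o_- \). Your remark that the basis property is what guarantees each canonical class is nonzero is the right point for getting equality of the support rather than just containment.
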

	
	\subsection*{Comparison with Chen's extension}
	
	Chen defined an extension of the Bar-Natan spectral sequence for twisted orientable links in \( \rp^3 \) \cite{Chen2025}. Here we provide examples demonstrating that Chen's spectral sequence and that of \Cref{Thm:dbnss} are distinct. On the other hand we show that, forgetting the quantum grading, the \( E_{\infty} \) pages of these sequences carry equivalent information. It is unknown to the author if the \( E_{\infty} \) pages are distinguished by their quantum grading in general; see \Cref{Sec:ras_inv}.
	
	\begin{definition}[{\cite[Definition 2.8]{Chen2025}}]\label{Def:twiori}
		A link is \emph{twisted orientable} if it is made up of nullhomologous components. A \emph{twisted orientation} of a link diagram \(D\) is an orientation of the arcs of \( D \) such that the orientations disagree at antipodal points (on the boundary of the ambient disc).
	\end{definition}
	
	An example is given in \Cref{Fig:3}. Twisted orientations are to Chen's extension as orientations are to classical Bar-Natan homology. Recall that the same is true for \(2\)-colourings and doubled Bar-Natan homology. We therefore begin by comparing the notions of twisted orientation and \(2\)-colouring.
	
	Let \( D \) be a twisted oriented diagram. The twisted orientation prescribes a smoothing in identical fashion to an orientation, so that we may speak of a \emph{twisted oriented resolution} of a crossing of \(D\); taking the twisted oriented resolution at every crossing yields a \emph{twisted oriented smoothing of \(D\)}. It is implicit in the proof of \cite[Proposition 2.16]{Chen2025} that twisted oriented smoothings are bipartite. As described in \Cref{Eq:2colsmoothing}, bipartite smoothings correspond to \(2\)-colourings of \(D\), up to globally switching the colours. It follows that a twisted orientable link is \(2\)-colourable.
	
	The converse is false in general: the link on the left of \Cref{Fig:1} is \(2\)-colourable but not twisted orientable. It follows that this doubled Bar-Natan homology is defined for such a link while Chen's extension is not.
	
	However, the two notions are equivalent when restricted to nullhomologous links.
	\begin{lemma}\label{Lem:to2cl}
		Let \(L\) be a nullhomologous link. If \(L\) is \(2\)-colourable then it is twisted orientable.
	\end{lemma}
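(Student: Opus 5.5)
The plan is to show that every component of \(L\) is nullhomologous, which by \Cref{Def:twiori} is exactly twisted orientability. I read ``nullhomologous link'' as meaning that the total class \([L] \in H_1(\rp^3;\mathbb{Z}) \cong \mathbb{Z}_2\) vanishes; the individual components need not be nullhomologous a priori. I will argue by contradiction, using \Cref{Prop:2col}: a \(2\)-colourable link has no degenerate component.

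Fix a diagram \(D = D_1 \cup \cdots \cup D_n\) of \(L\) in \(D^2\). For each component let \(b_i\) be the number of times \(D_i\) meets the boundary of the disc, counting identified antipodal pairs once. Then \([D_i] = b_i \bmod 2\) in \(H_1(\rp^3;\mathbb{Z}_2)\), so \([L]=0\) means \(\sum_i b_i\) is even. Consequently the number of essential components, those with \(b_i\) odd, is even.

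The key step is a parity count of crossings between two components. Projecting to \(\rp^2\), the number of crossings between \(D_i\) and \(D_j\) is congruent mod \(2\) to the mod-\(2\) intersection number of their images in \(H_1(\rp^2;\mathbb{Z}_2)\). That number is \(b_i b_j \bmod 2\), since the generator of \(H_1(\rp^2;\mathbb{Z}_2)\) has self-intersection \(1\). I would also want an elementary proof in the disc: an arc of \(D_j\) joins two antipodal boundary points, so it separates the disc, and the endpoints of \(D_i\)'s arcs are counted on each side. The linking number \(\text{lk}(D_i, D\setminus D_i) \bmod 2\) in \Cref{Def:degenerate} is computed from the crossings between \(D_i\) and the rest. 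Its parity is therefore the number of such crossings mod \(2\), which is congruent to \(b_i \sum_{j\neq i} b_j\).

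Now suppose some component \(D_i\) is essential. The number of essential components is even, so \(\sum_{j \neq i} b_j\) is odd. Then \(\text{lk}(D_i, D\setminus D_i) \equiv 1 \bmod 2\), so \(D_i\) is degenerate. By \Cref{Prop:2col} this contradicts \(2\)-colourability, so every component is nullhomologous and \(L\) is twisted orientable.

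For completeness I would also check that a twisted orientation of the diagram exists. Traverse each component \(D_i\): each passage through the boundary forces the orientation to flip. Since \(b_i\) is even, this prescription closes up consistently around the component.

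The main obstacle is the crossing-parity identity. In particular, I must confirm that the paper's convention for \(\text{lk} \bmod 2\) in \(\rp^3\) really is the crossing count mod \(2\); in \(\rp^3\) the rational linking number can be a half-integer. I would first try to settle this directly from the diagrammatic definition using the disc-separation argument.
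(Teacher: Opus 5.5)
Your proof is correct and has the same skeleton as the paper's. Both assume some component is essential, note that a nonzero even number of components are, show each such component is degenerate, and contradict \Cref{Prop:2col}. The difference is in how the key parity is obtained.

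\begin{itemize}
\item The paper applies crossing changes to reach a diagram in which the essential components pairwise satisfy \(\text{lk} = 1\). It then uses invariance of \(\text{lk} \bmod 2\) under crossing changes and sums over the other \(2k-1\) essential components.
\item You compute directly, from the mod-\(2\) intersection form on \(H_1(\rp^2;\mathbb{Z}_2)\), that \(D_i\) and \(D_j\) cross \(b_i b_j \bmod 2\) times.
\end{itemize}

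Your route is more self-contained. It does not need the normalizing crossing changes, which the paper asserts without proof. Since crossing changes preserve the number of crossings, the existence of that normalized diagram already entails the odd crossing count you prove. Your formula \(b_i \sum_{j \neq i} b_j\) also explicitly handles the crossings of \(D_i\) with nullhomologous components. These contribute an even number, and the paper's displayed sum drops them without comment.

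Your worry about conventions resolves in your favour. The paper's step ``crossing changes preserve linking number modulo \(2\)'' holds only if \(\text{lk} \bmod 2\) means the number of crossings between the components mod \(2\). For the halved signed count, a crossing change shifts \(\text{lk}\) by \(\pm 1\). So the crossing-count reading is the operative definition of degeneracy.

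One small correction to your optional elementary sketch. An arc of \(D_j\) in the disc generally does not join two antipodal points; the identification pairs endpoints of possibly different arcs. A disc-only proof would instead track how arc endpoints interleave on the boundary circle. Your homological argument already suffices, so this is not a gap.
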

	
	\begin{proof}
		Suppose that \( L = L_1 \cup \cdots \cup L_n \) and let \( D = D_1 \cup \cdots \cup D_n \) be a diagram of \(L\). If \(L\) is not twisted orientable then a nonzero (necessarily even) number of its components are homologically nontrivial; denote them \( L_{e_1}, \ldots, L_{e_{2k}} \).
		
		As \( D_{e_i} \) and \( D_{e_j} \) represent homologically nontrivial components there is a sequence of crossing changes that converts \( D \) to \( D' = D^{\prime}_1 \cup \cdots \cup D^{\prime}_n \), where \( D_i = D^{\prime}_i \) away from crossing neighbourhoods and
		\begin{equation*}
			\text{lk} \left( D^{\prime}_{e_i}, D^{\prime}_{e_j} \right) = 1
		\end{equation*}
		for all \( 1 \leq i < j \leq 2k \).
		
		As crossing changes preserve linking number modulo \(2\) we have
		\begin{equation*}
			\begin{aligned}
				\text{lk} \left( D_{e_i}, D \setminus D_{e_i} \right) \mod 2 &= \sum_{i = 1,~ i \neq j}^{2k} \text{lk} \left( D_{e_i}, D_{e_j} \right) \mod 2 \\
				&= 1 \mod 2.
			\end{aligned}
		\end{equation*}
		Thus \(D\) has a degenerate component and \(L\) is not \(2\)-colourable.
	\end{proof}
	
	\subsubsection*{Rank and homological degree support}
	
	Let \(L\) be a nullhomologous link and \( HBN(L) \) denote Chen's extension of Bar-Natan homology. Here we establish that the rank and homological degree support of \( HBN(L) \) and those of \( \dbn ( L ) \) carry equivalent information.
	
	\begin{proposition}\label{Prop:chdbn_rank}
		Let \( L \) be a nullhomologous link. Then the rank of \( \dbn (L) \) is twice that of \( HBN ( L ) \).
	\end{proposition}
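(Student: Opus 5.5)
We compare both ranks with counts of combinatorial data on a fixed diagram \(D\) of \(L\): \(2\)-colourings for \(\dbn(L)\), and twisted orientations for \(HBN(L)\). By \Cref{Cor:rankbn}, \(\dbn(L)\) has rank \(0\) if \(L\) has a degenerate component and rank \(2^{n+1}\) otherwise. From \cite{Chen2025} we import the analogue of the classical Bar-Natan rank formula: for a twisted orientable \(n\)-component link, \(HBN(L)\) is freely generated by canonical generators indexed by twisted orientations, so its rank is \(2^n\). (If \(HBN\) is only defined for twisted orientable links, the statement is tacitly restricted to that case; otherwise we use rank \(0\) in the non-twisted-orientable case.) The statement then reduces to showing that, for nullhomologous \(L\), being twisted orientable is equivalent to having no degenerate component.

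First, suppose \(L\) is twisted orientable. The discussion preceding \Cref{Lem:to2cl} shows that \(L\) is then \(2\)-colourable: twisted oriented smoothings are bipartite, and bipartite smoothings correspond to \(2\)-colourings. By \Cref{Prop:2col}, \(L\) has no degenerate component. Hence \(\operatorname{rank} \dbn(L) = 2^{n+1} = 2\cdot 2^n = 2\operatorname{rank} HBN(L)\).

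Conversely, suppose \(L\) is not twisted orientable. By \Cref{Lem:to2cl}, since \(L\) is nullhomologous, \(L\) is not \(2\)-colourable. By \Cref{Prop:2col}, \(L\) therefore has a degenerate component, and \Cref{Cor:rankbn} gives \(\operatorname{rank}\dbn(L)=0\). The relation \(0 = 2\cdot 0\) then holds, provided \(HBN(L)\) is taken to vanish or to be undefined in this case.

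The main obstacle is pinning down the rank of \(HBN(L)\) precisely from \cite{Chen2025}. One must confirm that the number of twisted orientations of an \(n\)-component twisted orientable diagram is exactly \(2^n\), with each component admitting exactly two. The argument for this is that twisted orientations are determined component-wise up to reversal, since the antipodal disagreement condition propagates consistently along each nullhomologous component. One must also confirm that Chen's canonical generators are indexed by twisted orientations rather than by bipartite smoothings. If instead Chen's generators correspond to bipartite smoothings, the count is \(2^{n-1}\). In that case the factor of \(2\) in the statement must be reconciled against the \(2^{n+1}\) of \Cref{Cor:rankbn}, which comes from \(2^n\) colourings each carrying the two generators \(\sgu_C,\sgl_C\). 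I would therefore first check Chen's rank formula directly and adjust this bookkeeping accordingly.
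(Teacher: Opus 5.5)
Your argument is correct and is essentially the paper's proof, which just combines \Cref{Cor:rankbn}, \cite[Theorem 1.1]{Chen2025} and \Cref{Lem:to2cl}. As you do, it also uses the converse implication (twisted orientable \(\Rightarrow\) \(2\)-colourable) from the discussion preceding that lemma. Your hedge resolves in favour of your main line: Chen's theorem supplies exactly the count you assume, namely rank \(2^n\) for twisted orientable links with generators indexed by twisted orientations, and \(HBN(L)=0\) for nullhomologous links that are not twisted orientable, which matches the paper's remark following the proposition.
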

	\begin{proof}
		Combine \Cref{Cor:rankbn}, \cite[Theorem 1.1]{Chen2025}, and \Cref{Lem:to2cl}.
	\end{proof}

	It follows that if \( L \) is nullhomologous and not twisted orientable then the homological degree supports of both \( HBN(L) \) and \( \dbn ( L ) \) are empty. It remains to relate the supports of the theories in the twisted orientable case. We introduce the following notions to do so, owing to the different grading conventions used here and in \cite{Chen2025}.
	
	Suppose that \( L \) is twisted orientable. Placing a marked point on each component of \(L\) induces a bijection between the twisted orientations of \(L\) and its orientations: a twisted orientation is sent to the unique orientation with which it agrees at the marked points. Henceforth we shall use one such bijection implicitly; different choices of marked points are accounted for in the shifts given in \Cref{Prop:chdbn_support} below.
	
	\begin{figure}
		\begin{center}
			\includegraphics[scale=0.8]{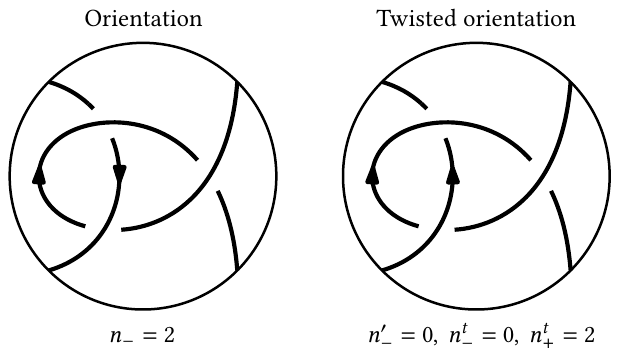}
			\caption{An oriented diagram associated to a twisted oriented diagram. In the twisted oriented diagram the two leftmost crossings are themselves twisted.\label{Fig:3}}
		\end{center}
	\end{figure}
	
	Let \( D \) be a twisted oriented diagram. We say that a crossing is itself \emph{twisted} if its twisted oriented resolution differs from its oriented resolution. Denote by \( n^t_{+} \) and \( n^t_{-} \) the number of twisted positive and twisted negative crossings, respectively, with positive and negative dictated by the twisted orientation. See again \Cref{Fig:3}.
	
	Given a \(2\)-colouring of \(D\) we designate crossings as either even or odd as per \Cref{Eq:odd_crossing}. There is a natural correspondence between twisted orientations and \(2\)-colourings of \(D\) (up to globally switching colours), under which the notions of twisted and odd crossings are equivalent.
	
	\begin{lemma}\label{Lem:twistedodd}
		Let \(D\) be a twisted oriented diagram. The twisted orientation prescribes a \(2\)-colouring of \(D\), up to globally switching the colours. A crossing is twisted with respect to the twisted orientation if and only if it is odd with respect to the associated \(2\)-colouring.
	\end{lemma}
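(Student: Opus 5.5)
The plan is to go through the twisted oriented smoothing, which is already known to be bipartite. Let \(D\) be a twisted oriented diagram and let \(S_t\) be its twisted oriented smoothing, obtained by taking the twisted oriented resolution at every crossing. As observed above (implicit in the proof of \cite[Proposition 2.16]{Chen2025}), \(S_t\) is bipartite. A bipartite smoothing has exactly two \(2\)-colourings of its circles, related by a global swap, and each of these pulls back to a \(2\)-colouring of \(D\): colour each arc of \(D\) by the colour of the circle of \(S_t\) containing it. This gives the first claim. Before using it, I must check that the pulled-back colouring satisfies the local rule at every double point. The plan for this is to use the correspondence of \Cref{Eq:2colsmoothing}: a colouring of the arcs near a crossing is admissible exactly when the resolution it prescribes via \Cref{Eq:2colsmoothing} joins arcs of equal colour. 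In \(S_t\), the two arcs joined by the twisted oriented resolution lie on the same circle, so they receive the same colour. Adjacent circles at a crossing receive opposite colours by bipartiteness, which is what the local picture requires. The colouring is unique up to the global swap, since \Cref{Prop:2col} and the bijection between bipartite smoothings and colourings up to swap fix it.

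For the second claim, the key step is to identify the \(2\)-coloured smoothing of \(D\) (for this colouring) with \(S_t\) itself. By construction, the resolution prescribed at each crossing by \Cref{Eq:2colsmoothing} is the one merging like-coloured arcs, and this is the twisted oriented resolution. Next, in the proof of \Cref{Cor:homsupport} the paper shows that a crossing is \(1\)-resolved in the \(2\)-coloured smoothing if and only if it is even negative or odd positive. Equivalently, the crossing is odd exactly when the \(2\)-coloured resolution differs from the resolution an orientation would choose. Here "positive/negative" and "oriented resolution" are read with respect to the local orientation given by the twisted orientation, since a twisted orientation is a genuine orientation on the arcs of \(D\) in the disc.

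The remaining point is the comparison at a single crossing. Classically, the oriented resolution of a positive crossing is the \(0\)-resolution and that of a negative crossing is the \(1\)-resolution. So the \(2\)-coloured resolution of a crossing is its oriented resolution precisely when the crossing is even positive (\(0\)-resolved) or even negative (\(1\)-resolved), that is, precisely when it is even. Since the \(2\)-coloured resolution equals the twisted oriented resolution, a crossing is odd if and only if its twisted oriented resolution differs from its oriented resolution. By definition, that is exactly a twisted crossing.

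I expect the main obstacle to be conventions rather than substance. I must make sure that the sign of a crossing used in \Cref{Cor:homsupport} (taken from the link orientation) agrees with the one dictated by the twisted orientation. The reading of "oriented resolution" in the definition of a twisted crossing must also be the one compatible with \Cref{Eq:2colsmoothing}. I would settle both by checking a single local picture of \Cref{Eq:odd_crossing} for each crossing sign. Reversing the orientation on both strands at a crossing preserves its sign and its oriented resolution, so the check is independent of which of the two strand-orientation conventions is used.
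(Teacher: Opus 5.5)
Your skeleton matches the paper's argument. You colour $D$ by pulling back a $2$-colouring of the bipartite twisted oriented smoothing $S_t$, observe that the resulting $2$-coloured smoothing is $S_t$ itself, and then compare resolutions locally at each crossing. Where the paper draws the local picture directly, you route the check through the proof of \Cref{Cor:homsupport}: a crossing is odd exactly when its $2$-coloured resolution differs from its oriented resolution.

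However, the orientation convention you fix in your second paragraph is wrong, and as stated it breaks the argument. Parity in \Cref{Eq:odd_crossing} and the signs in \Cref{Cor:homsupport} are taken with respect to the genuine orientation of $L$, the one matched to the twisted orientation by the marked points. They are not taken with respect to the twisted orientation. Suppose you read ``positive/negative'' and ``oriented resolution'' with respect to the twisted orientation, as you explicitly propose. Then ``odd'' means ``the $2$-coloured resolution differs from the twisted oriented resolution''. You have just shown the $2$-coloured smoothing is $S_t$, so every crossing would be even, and the lemma would force every crossing to be untwisted. That is false, e.g.\ for \Cref{Fig:3}. Your third paragraph only reaches the right conclusion because ``oriented resolution'' there silently reverts to the genuine orientation, which is what the definition of a twisted crossing requires.

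For the same reason, the ``obstacle'' in your last paragraph is misidentified. You cannot make the sign from the link orientation agree with the sign from the twisted orientation. A crossing is twisted precisely when the two orientations differ on exactly one of its strands, and that flips both the sign and the oriented resolution. This disagreement is exactly what the paper uses afterwards in \Cref{Prop:chdbn_support}, via $n^o_+ = n^t_-$ and $n^o_- = n^t_+$. Your remark that reversing both strands preserves sign and resolution is true, but it never applies at a twisted crossing.

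The fix is to state that in your second and third paragraphs, signs, parity and ``oriented resolution'' all refer to the genuine orientation. The twisted orientation then enters only through the identification of the $2$-coloured smoothing with $S_t$. The chain ``odd $\iff$ $2$-coloured resolution $\neq$ oriented resolution $\iff$ twisted oriented resolution $\neq$ oriented resolution $\iff$ twisted'' then goes through.
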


	\begin{proof}
	The twisted orientation defines a twisted oriented smoothing of \(D\). This smoothing is bipartite so that it can be \(2\)-coloured in two ways, that are related by globally switching the colours. Both of these smoothings define \(2\)-colourings of \(D\) via \Cref{Eq:2colsmoothing}.
	
	Without loss of generality let \(C\) denote one of these \(2\)-colourings (the argument is not affected by globally switching colours). To see that a crossing is twisted if and only if it is odd with respect to \(C\) we consult the following sequence of diagrams
	\begin{center}
		\includegraphics[scale=1.2]{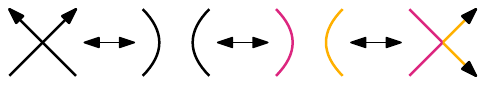}
	\end{center}
	Here the leftmost crossing has its twisted orientation, and the rightmost its orientation (recall that we are implicitly using a bijection between twisted orientations and orientations of \(D\)). The \(2\)-colouring must look as in the second-right diagram (up to globally switching colours) as twisted oriented smoothings are bipartite.
	\end{proof}

	Let \( C \) be a \(2\)-colouring of a diagram \(D\) (with positive and negative crossings defined with respect to the orientation, not a twisted orientation). Let \( n^e_{+}, n^e_{-} \) denote the number of even positive and negative crossings, respectively, and \( n^o_{+}, n^o_{-} \) the number of odd positive and negative crossings, respectively, where the parity is taken with respect to \(C\).

	\begin{proposition}\label{Prop:chdbn_support}
		Let \( D \) be a twisted oriented diagram of \( L \). Then \( HBN_i ( L ) \neq 0 \) if and only if \( \dbn_{i+n^t_{-}-n^t_{+}} (L) \neq 0 \).
	\end{proposition}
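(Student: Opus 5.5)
Both homologies are determined, as far as homological support goes, by their canonical generators, so the plan is to compare the homological degrees of matching generators. By \Cref{Cor:homsupportbn}, the support of \( \dbn(L) \) is the set of values \( n^o_+ - n^o_- \), where \( C \) ranges over the \(2\)-colourings of \(D\) and the signs of crossings come from the (genuine) orientation. On Chen's side, \cite[Theorem 1.1]{Chen2025} says \( HBN(L) \) is generated by classes indexed by the twisted orientations of \(L\). Each such class sits in homological degree equal to the number of \(1\)-resolutions in the twisted oriented smoothing, minus the number of crossings that are negative with respect to that twisted orientation. Before using this, I would check Chen's normalization of the homological grading and adjust if it differs.

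The correspondence comes from \Cref{Lem:twistedodd}: twisted orientations, via the implicit bijection with orientations and taken up to global reversal, correspond to \(2\)-colourings up to global colour switch. Under this correspondence twisted crossings are exactly odd crossings. Both sides also have the same count of generators, by \Cref{Prop:chdbn_rank}, since each colouring class contributes \( \sgu, \sgl \) at a common degree. So it suffices to compare degrees for a single corresponding pair: a twisted orientation \(o\) on one side and the associated colouring \(C\) on the other.

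To do this, I would fix a crossing and go through the cases of sign (positive or negative) and parity (even or odd). An even crossing is untwisted, so its sign with respect to the twisted orientation equals its sign with respect to the orientation, and its contribution to both degrees agrees. At an odd crossing, the twisted-oriented resolution is the non-oriented one. The twisted-orientation sign is then opposite to the orientation sign, because exactly one strand has reversed direction relative to the orientation. An odd crossing that is positive with respect to the orientation is therefore negative with respect to \(o\); this is exactly where \(n^t_\pm\) enters, since \(n^t_\pm\) is counted with twisted-orientation signs. I would then sum the per-crossing contributions. By the definition of the twisted oriented smoothing, Chen's generator lies in degree \( -\#\{\text{crossings negative w.r.t. } o \text{ resolved as }1\} + \dots \), which simplifies to \( 0 \) plus corrections. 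Tallying the contributions should give
\begin{equation*}
i_{\mathrm{Chen}}(o) \;=\; i(\sgu_C) - n^t_- + n^t_+ ,
\end{equation*}
which is equivalent to the claimed shift \( \dbn_{i + n^t_- - n^t_+} \).

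The main obstacle is the bookkeeping in this last step. It requires matching Chen's grading conventions precisely with ours, and checking that the sign flip at twisted crossings produces exactly \( n^t_- - n^t_+ \) rather than its negative or a term involving \(n^o_\pm\). I would verify the final formula on a small example, such as the diagram of \Cref{Fig:3}, to fix the signs. I would also confirm that different choices of marked points change both sides consistently, so that the statement does not depend on the chosen bijection.
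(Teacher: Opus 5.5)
\textbf{There is a genuine gap in how you set up the comparison, although the repair is short.}

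\emph{Chen's grading.} You take the degree of Chen's class indexed by a twisted orientation \(o\) to be the number of \(1\)-resolutions in the \(o\)-twisted oriented smoothing minus the number of crossings negative with respect to \(o\). That smoothing \(1\)-resolves exactly the \(o\)-negative crossings, so this quantity is identically \(0\); this is the source of your ``\(0\) plus corrections''. It is false for links with several components. Chen's grading is \(|S| - n'_-\), where \(n'_-\) is computed once, from the fixed twisted orientation of \(D\). Likewise the grading of \(\cdbn(D)\) is \(|S| - n_-\), with \(n_-\) computed from the fixed orientation of \(L\).

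\emph{The per-crossing step.} The same confusion affects this step. For a pair \((o,C)\), \Cref{Lem:twistedodd} compares the \(C\)-resolution with the oriented resolution for the orientation associated to \(o\) itself. The degree in \Cref{Cor:homsupportbn}, however, is computed with signs from the fixed orientation of \(L\). Suppose \(o\) is not \(\pm\) the given twisted orientation of \(D\), i.e.\ some components have been reversed. Then these two orientations disagree at crossings between reversed and unreversed components. There the rule ``even \(\Rightarrow\) same contribution to both degrees'' fails, and your tally would give a shift that depends on the generator. So the reduction to ``a single corresponding pair'' is unjustified: you never show that one shift works for all generators, and the statement asserts a single shift. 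For knots there is only one twisted orientation up to reversal, so there your argument does go through.

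\textbf{The missing observation, which is the whole of the paper's proof.} Both complexes are built on the same cube \(\llbracket D \rrbracket\). Their homological gradings are the same height function shifted by the global constants \(n'_-\) and \(n_-\). Hence on every smoothing the \(\dbn\)-degree equals the Chen degree minus \(n_- - n'_-\).

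\emph{Computing the shift.} Run your crossing-by-crossing analysis once, for the given twisted orientation of \(D\). Untwisted crossings keep their sign and twisted ones flip, so \(n^o_+ = n^t_-\), \(n^o_- = n^t_+\) and \(n'_- = n^e_- + n^t_-\). This gives \(n_- - n'_- = n^t_+ - n^t_-\), uniformly over all smoothings.

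\emph{Concluding.} Chen's generators \cite[Theorem 1.1, Proposition 2.17]{Chen2025} and the canonical generators of \Cref{Thm:cangen2} both sit at the same bipartite smoothings, since twisted oriented smoothings are exactly the \(2\)-coloured ones. The uniform shift then gives \(HBN_i(L)\neq 0\) if and only if \(\dbn_{i+n^t_- - n^t_+}(L)\neq 0\).
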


	\begin{proof}
		Let \( C \) be a \(2\)-colouring associated to the twisted orientation as per \Cref{Lem:twistedodd}. Denote by \( n^{\prime}_- \) the number of negative crossings with respect to the twisted orientation of \(D\), and \( n_- \) the number with respect to the orientation (an example is given in \Cref{Fig:3}). The homological grading of \( HBN(L) \) is defined by shifting the height of smoothings by \( n^{\prime}_- \), while that of \( \dbn (L)  \) shifts the height by \( n_- \). Notice that \( n_- = n^e_- + n^o_- \). \Cref{Lem:twistedodd} implies that
		\begin{equation*}
			\begin{aligned}
				n^o_+ &= n^t_- \\
				n^o_- &= n^t_+.
			\end{aligned}
		\end{equation*}
		Therefore \( n^{\prime}_- = n^e_- + n^t_- \), and \( n_- - n^{\prime}_- = n^o_- - n^t_- = n^t_+ - n^t_- \). Combining this with \cite[Proposition 2.17]{Chen2025} completes the proof.
	\end{proof}

	\subsubsection*{The spectral sequences are distinct}
	
	Despite the relationships of \Cref{Prop:chdbn_rank,Prop:chdbn_support} the spectral sequence defined by Chen and that of \Cref{Thm:dbnss} do not contain equivalent information. They are typically distinguished by their \( E_2 \) pages, and by the number of nontrivial pages.
	
	\begin{remark}\label{re:quantumshift}
		Recall that the two theories have different grading conventions. The necessary shift in homological degree (when passing from Chen's theory to the doubled theory) is given in \Cref{Prop:chdbn_support} as \( n^t_+ - n^t_- \). By a similar argument one can determine the necessary shift in quantum degree to be \( 3 ( n^t_+ - n^t_- ) \).
	\end{remark}
	
	For the knot given in \Cref{Fig:3} the sequences both have two nontrivial pages, but are distinguished by the (suitably shifted) quantum degree support of their \( E_2 \) pages.
	
	Further, for the knot given below, labelled \(4_1\) in Drobotukhina's table \cite{Drobotukhina94}, the sequences have a distinct number of nontrivial pages. Here we present their \(E_2\) pages, with that of the doubled Bar-Natan sequence of the left and Chen's sequence on the right:
	\begin{equation*}
	\begin{matrix}
	\raisebox{25pt}{\includegraphics{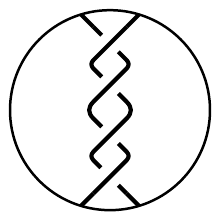}} & \includegraphics[scale=0.75]{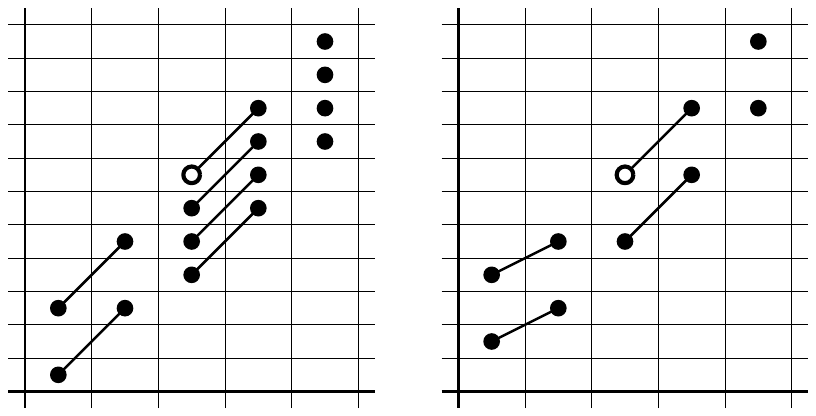}
	\end{matrix}
	\end{equation*}
	As before, \( \bullet, \circ \) denotes a copy of \( \mathbb{Z}_2 \). Differential terms are denoted by an edge between two generators, and axes are labelled by the integers. The hollow generator \( \circ \) in the right-hand grid is at bidegree \( (-2,-4) \). We compute \( n^t_- = 0 \), \( n^t_+ = 4 \), so that the hollow generator in the left-hand grid is at bidegree \( (2,8) \).
	
	Observe that the doubled Bar-Natan spectral sequence has two nontrivial pages, while Chen's spectral sequence has three.
	
	The index of the page at which the classical Bar-Natan sequence collapses places bounds on various geometric quantities; see, for example, \cite{Alishahi2019,Caprau2021}. It is therefore interesting to fully determine any relationship between the sequences considered here.
	
	\begin{question}\label{Q:pages}
		Let \(K\) be a nullhomologous knot such that Chen's spectral sequence has \(A\) nontrivial pages and the doubled Bar-Natan spectral sequence has \(B\) nontrivial pages.
		
		Is \( A \geq B \)? If so, can \( A - B \) be made arbitrarily large? If not, can \( B - A \) be made arbitrarily large?
	\end{question}
	
	\section{Functoriality}\label{Sec:properties}
	
	The doubled invariants defined in \Cref{Sec:dkh,Sec:dlee,Sec:dbn} enjoy functoriality as in the classical case. In \Cref{Sec:cobordisms} we show that concordances induce isomorphisms on doubled Lee and Bar-Natan homology. Together with appropriate results concerning the quantum grading support, this is applied in \Cref{Sec:ras_inv} to extend the Rasmussen invariant to \( \rp^3 \). We compare these new invariants to those previously defined by Chen and Manolescu-Willis. We conclude by describing the class of cobordisms on which the new Rasmussen invariants place genus bounds in \Cref{Sec:genusbounds}.
	
	Many of the constructions and proofs of this section have counterparts in the related setting of virtual links, as described in \cite[Section 3.2]{Rushworth2017}.
	
	\subsection{Cobordism maps}\label{Sec:cobordisms}
	
	Let \( L_0 \), \(L_1\) be links. For our purposes a \emph{cobordism from \(L_0\) to \(L_1\)} is a smoothly properly embedded compact surface \( S \hookrightarrow \rp^3 \times [0,1] \) such that \( \partial S = L_0 \sqcup L_1 \). We do not consider such cobordisms up to any notion of isotopy, so that \(S\) prescribes diagrams \( D_0 \), \( D_1 \) of \( L_0 \) and \(L_1\), respectively. As such we may alternatively refer to \(S\) as a cobordism from \( D_0 \) to \(D_1\).
	
	As is customary we first define the maps associated to simple cobordisms, from which those associated to general cobordisms are constructed.
	
	\begin{definition}[c.f.\ \cite{Carter1993}]\label{Def:elementary}
		Let \( S \) be a cobordism from \( D_0 \) to \( D_1 \). We say that \( C \) is \emph{elementary} if \( D_1 \) is obtained from \( D_0 \) via:
		\begin{enumerate}[(i)]
			\item A Reidemeister move (as given in \Cref{Prop:rm}).
			\item The birth of an unknotted, unlinked, homologically trivial component.
			\item The death of an unknotted, unlinked, homologically trivial component.
			\item An oriented saddle.
		\end{enumerate}
	\end{definition}

	\begin{definition}\label{Def:elementary_maps}
	Let \( S \) be an elementary cobordism from \(D_0 \) to \(D_1\), with \( D_0 \neq \emptyset \). It follows that \( D_0 \) and \(D_1\) are identical outside of a disc neighbourhood \( \nu \). There is a map of cubes \( \llbracket D_0 \rrbracket \rightarrow \llbracket D_1 \rrbracket \) defined by replacing \( \nu \cap D_0 \) with \( \nu \cap D_1 \) in every smoothing of \( D_0 \). This in turn induces a map
	\begin{equation*}
		\cdkh ( S ) : \cdkh ( D_0 ) \rightarrow \cdkh ( D_1 )
	\end{equation*}
 	as follows.
 	
 	The maps associated to Reidemeister moves are defined in essentially identical fashion to the classical case; see, for example, \cite[Section 8]{Bar-Natan2005} or \cite[Section 2.3]{Hayden2024}.
 	
 	If \( S \) is a saddle the map \( \cdkh ( S ) \) is defined on states as \( m \), \( \Delta \), or \( \eta \), as dictated by the map of cubes.
 	
 	If \( S \) is a birth the map \( \cdkh ( S ) \) is given by \( \iota \otimes 1 \otimes \cdots \otimes 1 \), where
 		\begin{equation*}
 			\begin{aligned}
 				\iota : R &\rightarrow A \\
 				1 &\mapsto v^{u/\ell}_{+}
 			\end{aligned}
 		\end{equation*}
 		with superscript determined by that of the argument state e.g.\ \( \iota ( 1 ) \otimes \vup = v^{u}_{+-} \).
 	
 	If \( S \) is a death the map \( \cdkh ( S ) \) is given by \( \epsilon \otimes 1 \otimes \cdots \otimes 1 \), where
 		\begin{equation*}
 			\begin{aligned}
 				\epsilon : A &\rightarrow R \\
 				\vup, \vlp &\mapsto 0 \\
 				\vum, \vlm &\mapsto 1.
 			\end{aligned}
 		\end{equation*}
 	\end{definition}
 
 	\begin{remark}\label{Remark:1}
 		The condition that \( D_0 \neq \emptyset \) in \Cref{Def:elementary_maps} is required only to ensure that \( \iota \) is well-defined. This condition can be removed by replacing the ground ring \(R\) with
 		\begin{equation*}
 			\mathcal{S} \coloneq R \oplus R \lbrace -1 \rbrace = R [ z ] / \langle z^2 - 1 \rangle
 		\end{equation*}
 		graded as \( p (1) = 0 \), \( p(z) = -1 \). Setting \( v^{\ell}_{\pm} = z v_{\pm} \) we have \( \mathcal{A} = \langle v_+, v_- \rangle_{\mathcal{S}} \) and may define 
 		\begin{equation*}
 			\iota (1) = v_+, \quad \iota (z) = z v_+
 		\end{equation*}
 		and
 		\begin{equation*}
 			\epsilon ( v_+ ) = \epsilon ( z v_+ ) = 0, \quad \epsilon ( v_- ) = 1, \quad \epsilon ( z v_- ) = z.
 		\end{equation*}
 		With this definition \( \iota \) and \( \epsilon \) are \( S \)-linear, but \( \eta \) is not e.g.\ \( \eta ( v_- ) = z v_- \neq 0 = \eta ( z v_- ) \). It follows that the tuple \( \left( A, m, \Delta, \eta, \epsilon, \iota \right) \) is not an unoriented TQFT as described by Turaev-Turner \cite{Turaev2006} (the construction is also not multiplicative with respect to disjoint union). There is a wealth of generalised TQFT constructions e.g.\ \cite{Blanchet2016,Turzillo2020}: do any of them provide a natural categorical framework for the doubled theories?
 		
 		Curiously, the map \( \eta_2 \) of \Cref{Def:cdbn} is indeed \( S \)-linear as it is multiplication by \(z\) (however, the Bar-Natan theory still fails to be multiplicative).
 	\end{remark}
 
 	That \( \cdkh (S)  \) is a chain map is verified exactly as in the classical case. We denote the induced map on doubled Khovanov homology as
 	\begin{equation*}
 		\dkh ( S ) : \dkh ( L_0 ) \rightarrow \dkh ( L_1 ).
 	\end{equation*}
 
 	A general cobordism \( S \) can be decomposed (nonuniquely) into a sequence of elementary cobordisms. Let
 	\begin{equation*}
 		S = S_0 \cup \cdots \cup S_n
 	\end{equation*}
 	be such a decomposition and define
 	\begin{equation*}
 		\dkh ( S ) \coloneqq \dkh ( S_n ) \circ \cdots \circ \dkh ( S_0 ).
 	\end{equation*}
 	We expect that \( \dkh ( S ) \) does not depend on the decomposition (i.e.\ that it is invariant under isotopy of \(S\) fixing the boundary pointwise), but as any dependence does not affect the results of this section we shall not investigate further.
 	
 	We define cobordism maps on doubled Lee and Bar-Natan homologies in identical fashion as for doubled Khovanov homology:
 	\begin{itemize}
 		\item Replacing \( m \), \( \Delta \), \( \eta \) in \Cref{Def:elementary_maps} with \( m ' \), \( \Delta ' \), \( \eta ' \) of \Cref{Def:cdlee} yields the map \( \dkh ' ( S ) : \dkh' ( L_0 ) \rightarrow \dkh' ( L_1 ) \).
 		\item Replacing \( m \), \( \Delta \), \( \eta \) in \Cref{Def:elementary_maps} with \( m_2 \), \( \Delta_2 \), \( \eta_2 \) of \Cref{Def:cdbn} yields the map \( \dbn ( S ) : \dbn ( L_0 ) \rightarrow \dbn ( L_1 ) \).
 	\end{itemize}
 
 	Notice that \( \dkh ( S ) \) is bigraded of degree \( \left( 0 , \chi ( C ) \right) \), while \( \dkh ' ( S ) \) and \( \dbn ( S ) \) are \(i\)-graded of degree \( 0 \) and \( j \)-filtered of degree \( \chi ( S ) \).

	Unlike in the classical case the maps \( \dkh ' ( S ) \) and \( \dbn ( S ) \) may be zero, even if \( S \) is orientable. Happily, concordances between \(2\)-colourable links induce nonzero maps, where for our purposes a cobordism is a concordance if it is a disjoint union of properly embedded annuli (one for each link component).
	
	\begin{lemma}[c.f.\ Theorem 3.21 \cite{Rushworth2017}]\label{Lem:nonzero}
		Let \( S \) be a concordance from \( L_0 \) to \( L_1 \) where \( L_0 \) is \(2\)-colourable. The maps \( \dkh ' ( S ) \) and \( \dbn ( S ) \) are nonzero. Moreoever, they map canonical generators to canonical generators (up to sign in the case of \(\dkh ' ( S )\)).
	\end{lemma}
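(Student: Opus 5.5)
We decompose \(S\) into elementary cobordisms \(S = S_0 \cup \cdots \cup S_n\) as in \Cref{Def:elementary}, and show that each elementary piece sends canonical generators to (a unit multiple of) canonical generators. Nonvanishing of \( \dkh'(S) \) and \( \dbn(S) \) then follows, since canonical generators are nonzero in homology by \Cref{Thm:cangen,Thm:cangen2}. A preliminary observation is that every intermediate link \(L_t\) in the movie is \(2\)-colourable. Because \(S\) is a disjoint union of annuli, each component of \(L_t\) is concordant to a component of \(L_0\). Mod \(2\) linking numbers between components, and hence the degenerate condition of \Cref{Def:degenerate}, are invariant under concordance in \( \rp^3 \times [0,1] \); this is an intersection-number argument with \( \mathbb{Z}_2 \) coefficients. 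So \Cref{Prop:2col} gives \(2\)-colourings at every stage. Moreover, a \(2\)-colouring of \(L_0\) should propagate through the movie to a \(2\)-colouring of \(L_1\), component by component.

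Reidemeister moves are handled by \Cref{Thm:cangen,Thm:cangen2}, which already assert that canonical generators go to canonical generators up to sign. We only need to check that the colouring propagated along the move is the one whose generator appears in the image. This mirrors the classical argument of Rasmussen, adapted for virtual links in \cite[Section 3]{Rushworth2017}. The moves \(R4\), \(R5\) induce isomorphisms of cubes, so for them this is immediate.

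In a concordance, births and deaths are paired with saddles, so we treat those pieces directly on canonical generators. For a birth, \( \iota(1) = v_+ \), and in the \(\{r,p\}\) basis \(v_+ = r + p\), while in the \(\{a,b\}\) basis \(v_+ = a+b\). Thus \( \iota \) sends \( \sgu_C \) to the sum of the two canonical generators whose colourings extend \(C\) to the new unlinked component, one with each colour on that component. For a death, \( \epsilon(r) = -\epsilon(p)= -1/2 \) up to normalisation, so it sends canonical generators to nonzero multiples of canonical generators of the smaller diagram. For an oriented saddle, we compute in the \(2\)-coloured smoothing using the formulas in the proof of \Cref{Thm:cangen}:
\begin{itemize}
\item if the saddle joins two circles of the same colour, \(m'\) gives \(r\) or \(p\) again;
\item if it joins circles of different colours, the result is \(0\);
\item a split gives \(\Delta'(r) = 2r\otimes r\);
\item \(\eta'\) exchanges \(u\) and \(\ell\) with a factor \(1\) or \(2\), both units since \(2\) is invertible.
\end{itemize}
The Bar-Natan case is identical using \(\{a,b\}\) and \(\eta_2\).

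The main obstacle is the saddle joining two circles of different colours, which kills a canonical generator. The plan is to follow \cite[Theorem 3.21]{Rushworth2017}. Since each component of \(S\) is an annulus, we can reorder the movie so that every saddle is either a fission later undone or a fusion with a component born earlier. We track \( \sgu_C \) for a colouring \(C\) of \(L_0\) extended consistently along each annulus. For a birth followed by a merge, we choose the summand of \( \iota(1) = r + p\) matching the colour of the circle it merges with; the other summand is killed by the merge. The annulus condition guarantees that a fusion always occurs between arcs carrying the colour induced from a single component of \(L_0\), so no mixed-colour merge occurs on the surviving summand.

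Composing these steps, \( \dkh'(S)(\sgu_C) \) is a nonzero multiple of a canonical generator \( \mathfrak{s}^{u/\ell}_{C'} \) for the propagated colouring \(C'\) of \(L_1\). The multiple is a power of \(2\), hence a unit in \(R\), and over \( \mathbb{Z}_2 \) it equals exactly \(1\), which gives both claims of the lemma.
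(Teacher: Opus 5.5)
Your skeleton matches the paper's proof: decompose \(S\), show every intermediate diagram is \(2\)-colourable, check each elementary map on canonical generators, and compose. You are in fact more careful than the paper in noticing that a birth produces a \emph{sum} of two canonical generators and that a fusion of differently coloured arcs kills one. However, two steps are not justified as written.

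The main gap is your preliminary observation. It is false that each component of an intermediate level \(L_t\) is concordant to a component of \(L_0\). A newborn unknot is not, and neither is a piece split off by a fission. Nor is \(L_t\) concordant to \(L_0\) as a link, since the number of components changes. So invariance of mod-\(2\) linking under concordance tells you nothing about \(L_t\).

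This step is essential. A fission whose two arcs carry different colours kills \emph{every} canonical generator, because switching colourings flips both arcs. This happens exactly when the fission creates a degenerate pair, so no choice of summands can repair it.

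A correct argument runs as follows. The degenerate condition is a parity of crossings with the other components, i.e.\ the \(\mathbb{Z}_2\)-intersection of projections in \(\rp^2\). Hence a component is degenerate iff it is homologically nontrivial and an odd number of the other components are. So a link is \(2\)-colourable iff its number of homologically nontrivial components is \(0\) or odd. Each circle of \(L_t\) lying in an annulus \(A_j\) of \(S\) is either inessential (it bounds a disc, so is nullhomologous) or essential (homologous to \(A_j \cap L_0\)). There is an odd number of essential ones, since an arc of \(A_j\) from bottom to top meets level \(t\) an odd number of times. So this parity condition passes from \(L_0\) to every \(L_t\). Alternatively, use the paper's case analysis of births, deaths, fusions and fissions.

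Second, you may not reorder the movie. The map is defined through an arbitrary decomposition, and independence of the decomposition is not established, so your argument must work for the given one. It can, but not for the reason you state. The claim that fusions occur between arcs carrying ``the colour induced from a single component of \(L_0\)'' has no content, since colours change at every crossing.

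The actual mechanism is that the Reeb graph of each annulus is a tree with one bottom leaf. On each edge, record which of the two colourings of that component is used; this is legitimate because colourings are independent per component.
\begin{itemize}
\item At a fusion, the summand survives only if the two incoming choices satisfy one linear condition; any one of the three incident choices then determines the other two.
\item At a fission, the same holds automatically, because fissions are same-coloured by the previous paragraph.
\item Choices are free only at births, and the bottom choice is fixed by \(C\).
\end{itemize}
On a tree this gives exactly one consistent assignment. Hence exactly one summand produced by the births survives all fusions, and the others are killed. Uniqueness also rules out cancellation between surviving summands. This is a genuine concern over \(\mathbb{Q}\), where \(\epsilon(r) = -\epsilon(p)\).
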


	\begin{proof}
		Pick a decomposition of \( S \) into elementary cobordisms
		\begin{equation*}
			S = S_0 \cup \cdots \cup S_n
		\end{equation*}
		where \( \partial S_i = D_i \sqcup D_{i+1} \), and let \( f \in \lbrace \dkh ' ( S ), \dbn ( S ) \rbrace \).
		
		If there exists \( D_j \) that is not \(2\)-colourable then \( f \) is the zero map by \Cref{Cor:rank,Cor:rankbn}. We begin by proving that such a diagram is not encountered.
		
		If \( S_i \) is a birth or death of a component then \( D_{i+1} \) is \(2\)-colourable if and only if \( D_i \) is. If \( S_i \) is a saddle between two distinct components and \( D_i \) has no degenerate components then \( D_{i+1} \) has no such components either, so that \( D_{i+1} \) is \(2\)-colourable.
		
		Suppose that \( S_i \) is a saddle that splits one component into two. The resulting components of \( D_{i+1} \) are either both degenerate or both nondegenerate. If the former, observe that at least one of these components must be involved in another saddle (that merges two components) at some later point, as \( S \) is a concordance. However, the compactness of \(S\), and the fact that degenerate components cannot be removed by deaths, implies that this cannot occur.
		
		Recall that Reidemeister moves preserve \(2\)-colourability. Together with the above observations this implies that \( D_i \) is \(2\)-colourable for all \( 1 \leq i \leq n+1 \).
		
		Let \( g \in \lbrace \dkh ' ( S_i ), \dbn ( S_i ) \rbrace  \) for some \( 1 \leq i \leq n \). We claim that \( g \) maps canonical generators to canonical generators (up to sign in the case of \( \dkh ' ( S_i ) \)).
		
		If \( S_i \) is a birth or death of a component then the claim is verified by observing that
		\begin{equation*}
			\begin{aligned}
				\iota ( 1 ) &= o^{u/\ell} + p^{u/\ell} = a^{u/\ell} + b^{u/\ell} \\
				\epsilon ( o^{u/\ell} ) &= - \dfrac{1}{2} \\
				\epsilon ( p^{u/\ell} ) &= \dfrac{1}{2} \\
				\epsilon ( a^{u/\ell} ) &= \epsilon ( b^{u/\ell} ) = 1.
			\end{aligned}
		\end{equation*}
		
		Let \( S_i \) be a saddle between two distinct components. Consulting \Cref{Def:cdlee,Def:cdbn} we see that a canonical generator is sent to a canonical generator under \( g \) if the circles involved in the saddle are the same colour, and is otherwise sent to zero.
		
		Let \( S_i \) be a saddle that splits one component into two. The circles of a canonical generator involved in the saddle are coloured distinct colours if and only if the resulting components of \( D_{i+1} \) are degenerate. As established above such pairs of degenerate components do not occur in \( S \), so \( g \) sends canonical generators to (nonzero multiples of) canonical generators.
		
		If \( S_i \) is a Reidemeister move the arguments given by Rasmussen in the case of classical Lee homology can be applied here \emph{mutatis mutandis} \cite[Section 6]{Rasmussen2010}.
		
		It follows that \( f \) is nonzero, as it is the composition of the maps \( g \in \lbrace \dkh ' ( S_i ), \dbn ( S_i ) \rbrace \) for \( 1 \leq i \leq n \).
	\end{proof}

	Pardon's result on Lee homology \cite[Theorem 1.2, (5)]{Pardon2012} therefore extends to both doubled Lee and Bar-Natan homologies.

	\begin{corollary}\label{Cor:cinv}
		Doubled Lee and Bar-Natan homology are concordance invariants.
	\end{corollary}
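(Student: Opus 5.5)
The plan is to follow Pardon's argument for classical Lee homology, using \Cref{Lem:nonzero} in place of the corresponding classical nonvanishing statement. Let \( S \) be a concordance from \( L_0 \) to \( L_1 \); the precise claim is that \( \dkh ' ( L_0 ) \cong \dkh ' ( L_1 ) \) and \( \dbn ( L_0 ) \cong \dbn ( L_1 ) \) as \(i\)-graded, \(j\)-filtered modules, with the isomorphism induced by \( S \).

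First I dispose of the non-\(2\)-colourable case. The proof of \Cref{Lem:nonzero} shows that, along a concordance, a \(2\)-colourable diagram is never followed by a non-\(2\)-colourable one. Applying this to \( S \) and to its reverse \( \overline{S} \) (turned upside down, which is also a concordance) gives: \( L_0 \) is \(2\)-colourable if and only if \( L_1 \) is. If neither is, then by \Cref{Cor:rank,Cor:rankbn} both homologies vanish and there is nothing to prove. So assume both are \(2\)-colourable. Concordant links have the same number of components \(n\), so by the same corollaries both homologies are free of rank \(2^{n+1}\), with bases given by canonical generators (\Cref{Thm:cangen,Thm:cangen2}).

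Next I show that \( f = \dkh ' ( S ) \) (respectively \( \dbn ( S ) \)) is an isomorphism. By \Cref{Lem:nonzero}, each elementary piece sends each canonical generator to a unit multiple of a canonical generator. Hence \( f \) sends canonical generators to unit multiples of canonical generators; over \( \mathbb{Z}_2 \), and over \(R\) because \(2\) is invertible, these multiples are units. The remaining point is that this assignment is injective on generators. For this I track the \(2\)-colouring through each elementary step:
\begin{itemize}
\item Reidemeister moves, births and deaths induce bijections between \(2\)-colourings.
\item A saddle sends \( \mathfrak{s}_C \) to \( \mathfrak{s}_{C'} \), where \( C' \) is the unique colouring agreeing with \( C \) away from the saddle.
\item The \(u/\ell\) superscript is either preserved or swapped uniformly by \( \eta \).
\end{itemize}
Because the resulting correspondence is determined locally and invertibly at each step (using the reverse cobordism), \( f \) is a bijection on canonical generators up to units, and therefore an isomorphism. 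I expect this injectivity to be the main obstacle: one must check that two different colourings never collide after a merge saddle. Here the concordance hypothesis enters, since each annulus component carries a single colour consistently. Births and deaths merely introduce and remove small circles whose colour is forced by the surrounding saddle.

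Finally, the gradings. The map \( f \) has homological degree \(0\) and is filtered of degree \( \chi ( S ) = 0 \), and the same holds for \( \dkh ' ( \overline{S} ) \). Both \( f \) and \( \dkh ' ( \overline{S} ) \circ f \) send canonical generators to unit multiples of canonical generators bijectively. Pardon's filtration argument then applies verbatim: an isomorphism \(g\) of free filtered modules such that both \(g\) and some map \(h\) in the reverse direction are filtered of degree \(0\) must be a filtered isomorphism, by comparing the associated graded dimensions in each filtration level. This yields the filtered isomorphism, and hence the corollary, for both doubled theories.
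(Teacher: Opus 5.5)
Your overall route is the paper's: its whole proof is the remark that Pardon's argument goes through once \Cref{Lem:nonzero} is available. Your preliminary reduction via the reversed concordance, showing \(L_0\) is \(2\)-colourable if and only if \(L_1\) is, is a sensible addition.

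\textbf{The gap.} It lies in the step you yourself single out as the main obstacle: that \(f\) permutes canonical generators up to units. You justify it by three claims: each elementary piece sends a canonical generator to a unit multiple of a \emph{single} canonical generator; Reidemeister moves, births, deaths and saddles induce bijections on \(2\)-colourings; hence the correspondence is ``locally invertible''. Only the Reidemeister and \(u/\ell\) parts of this are true.
\begin{itemize}
\item A birth doubles the number of \(2\)-colourings. Since \( \iota(1) = v_+ = r + p \) (respectively \( a + b \)), it sends \( \mathfrak{s}_C \) to the \emph{sum} \( \mathfrak{s}_{C \cup \mathrm{orange}} + \mathfrak{s}_{C \cup \mathrm{pink}} \).
\item A death sends both of these to nonzero multiples of the same \( \mathfrak{s}_C \), since \( \epsilon(r), \epsilon(p) = \pm\tfrac12 \) (both \(1\) over \(\mathbb{Z}_2\)).
\item A saddle merging two components sends the two colourings whose arcs in \( \nu \) have different colours to zero, since \( m'(r \otimes p) = 0 = m_2(a \otimes b) \). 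So four colourings of those components go to two.
\end{itemize}
There is therefore no stepwise bijection to compose, and invoking the reverse cobordism step by step does not help. \Cref{Lem:nonzero} asserts the canonical-to-canonical property only for the whole concordance \(S\); its proof is loose about births in the same way.

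\textbf{The missing global argument.} It uses that \(S\) is a disjoint union of annuli. Fix a colouring \(C\) of \(L_0\).
\begin{itemize}
\item Each annulus has genus zero and \(S\) has no closed components. So every born circle, together with its descendants, joins exactly one annulus through exactly one merge saddle. That saddle forces its colour and kills the other birth term.
\item Hence \(C\) has a unique propagation through the movie and no cancellation occurs. There is a well-defined map \( \varphi \) such that \( f(\mathfrak{s}^{u}_C) \) and \( f(\mathfrak{s}^{\ell}_C) \) are unit multiples of \( \mathfrak{s}^{u}_{\varphi(C)} \) and \( \mathfrak{s}^{\ell}_{\varphi(C)} \), in some order.
\item Injectivity of \( \varphi \): two colourings either agree on every arc of a component or differ on every arc. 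Suppose \( C \neq C' \) differ on a component \(K\) with annulus \(A_K\). At each level \(t\) they differ on every top-boundary circle of the component of \( A_K \cap (\rp^3 \times [0,t]) \) containing \(K\). Reidemeister moves, splits and merges with born circles preserve this. That set of circles is never empty, because \(A_K\) is connected and reaches \(L_1\).
\item At \(t = 1\) this gives \( \varphi(C) \neq \varphi(C') \). So \( \varphi \) is injective, hence bijective because both links have \(2^n\) colourings, and \(f\) is an isomorphism.
\end{itemize}
Only then does your last paragraph reduce to Pardon's argument. There you also need the reverse map \( \dkh'(\overline{S}) \) to be injective; a merely filtered \(h\) does not suffice. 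The dimension comparison of filtration levels also needs field coefficients.
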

	
	\subsection{Rasmussen invariants}\label{Sec:ras_inv}
	We have now established that the rank and homological grading support of both doubled Lee and Bar-Natan homologies are tightly controlled, and that these homologies are concordance invariants. The final ingredient required to extract Rasmussen invariants from these homologies concerns their quantum grading support.
	
	Let \( D \) be a diagram of a link \( L \). As mentioned in \Cref{Sec:dlee} both \( \cdkh ' (D) \) and \( \cdbn ( D ) \) are \(j\)-filtered where
	\begin{equation*}
		F^k \cdkh ' (D) \coloneqq \lbrace x \in \cdkh ' (D) ~|~ j (x) \geq k \rbrace
	\end{equation*}
	and \( F^k \cdbn (D) \) is defined similarly. Denote by \( s \) the grading on \( \dkh ' (L) \) and \( \dbn ( L ) \) induced by the filtration.
	
	\begin{proposition}\label{Prop:qsupport}
		Let \( K \) be a knot. The \(s\)-grading supports of \( \dkh ' (K) \) and \( \dbn ( K ) \) are respectively of the form
		\begin{equation*}
			\lbrace u + 1, u, u - 1 , u - 2 \rbrace
		\end{equation*}
		for some (possibly distinct) \( u \in \mathbb{Z} \).
	\end{proposition}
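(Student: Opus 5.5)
We adapt Rasmussen's argument for knots, splitting \(\dkh'(K)\) and \(\dbn(K)\) into two summands via the \(u/\ell\) superscript. A knot is \(2\)-colourable, so by \Cref{Prop:2col} it has exactly two \(2\)-colourings \(C\) and \(\bar{C}\), related by switching colours. By \Cref{Thm:cangen}, \Cref{Cor:rank} and \Cref{Thm:cangen2}, \(\dkh'(K)\) and \(\dbn(K)\) are free of rank \(4\), generated by \(\sgu_C, \sgu_{\bar C}, \sgl_C, \sgl_{\bar C}\).

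First we use the \(\mathbb{Z}_2\)-grading by superscript. Every differential component preserves or toggles the superscript uniformly: \(m,\Delta\) preserve it and \(\eta\) toggles it. The chain space therefore splits as \(U\oplus L\) (states with superscript \(u\), resp.\ \(\ell\)), but the differential need not respect this splitting. Instead we use the symmetry between the two sheets. We define an \(R\)-linear involution \(\tau\) swapping \(v^u_\pm \leftrightarrow v^\ell_\pm\), twisted by the number of \(\eta\)-edges traversed to keep track of the factor \(2\). Over \(\mathbb{Z}_2\) this is multiplication by \(z\) as in \Cref{Remark:1}, since \(\eta_2\) is \(z\)-linear. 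In the Lee case we instead use the rescaled map \(\vup\mapsto\vlp\), \(\vlp\mapsto 2\vup\), and its analogue on the \(v_-\) terms, which commutes with \(\eta'\) because \(\eta'\) has this form. This map is a filtered chain map lowering \(j\) by exactly \(1\) on the \(u\)-part and raising it by \(1\) on the \(\ell\)-part. Since \(\tau\) sends \(\sgu\) to a unit multiple of \(\sgl\), we expect the relation \(s(\sgl_C)=s(\sgu_C)-1\) on the canonical classes.

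Next we run the classical argument on the \(u\) generators. As in \cite{Rasmussen2010}, the elements \(\sgu_C\pm\sgu_{\bar C}\) have \(s\)-gradings differing by exactly \(2\). One is supported on states with an even number of \(v_-\), the other on states with an odd number, after an appropriate change of basis. The filtration degree modulo \(4\) then forces the gradings to be \(s_{\min}\) and \(s_{\min}+2\). Here \(j\)-degrees of the perturbation terms are \(0\) and \(4\) for Lee and \(0\) and \(2\) for Bar-Natan, and in the Bar-Natan case one uses instead \(\sgu_C\) and \(\sgu_C+\sgu_{\bar C}\), following Turner's argument \cite{Turner2006}. To get the gap of exactly \(2\), rather than merely at least \(2\), we need a perturbed \(\iota,\epsilon\) pairing for a marked basepoint, combined with \(\tau\).

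Combining these steps, we set \(u+1\) to be the top grading of the \(u\)-pair. The \(u\)-pair then occupies \(\{u+1,u-1\}\) and the \(\ell\)-pair occupies \(\{u,u-2\}\), giving support \(\{u+1,u,u-1,u-2\}\). Both theories run in parallel, with possibly different \(u\).

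The main obstacle is the exact gap of \(2\) in the third step. The classical proof uses the multiplication action of \(X\) at a basepoint, which here interacts with \(\eta\) through factors of \(2\) or \(z\). A second concern is showing that the sheet-swapping map \(\tau\) is genuinely a filtered chain isomorphism of the stated degree. I would first try defining \(X\)-actions separately on the \(u\) and \(\ell\) sheets and checking commutativity with \(\eta'\) and \(\eta_2\) directly.
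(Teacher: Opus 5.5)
Your proposal has a genuine gap at the one ingredient that is new relative to Rasmussen's argument: comparing the \(u\)- and \(\ell\)-sheets. In the Lee case your \(\tau\) is not a chain map, since \(\tau\eta'(\vup)=\tau(\vlp)=2\vup\) while \(\eta'\tau(\vup)=\eta'(\vlp)=2\vum\). The reason is structural. The \(\ell\to u\) component of \(\eta'\) is \(2X_\gamma\), i.e.\ multiplication by \(X\) on the circle \(\gamma\) bounding the M\"obius band. A uniform sheet swap would therefore need its \(\ell\to u\) component to equal \(2X_\gamma\) for every such \(\gamma\) at once. ``Twisted by the number of \(\eta\)-edges traversed'' does not define such a map. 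Using a basepoint operator \(2X_p\) instead leaves a commutator \(\pm 2(X_p-X_\gamma)\) on every \(\eta'\)-edge.

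This gap matters for the statement. All differential terms have \(j\)-degree \(0\) or \(4\), so the complex splits by \(j \bmod 4\). The classes \(\sgu_C+\sgu_{\bar C}\), \(\sgu_C-\sgu_{\bar C}\), \(\sgl_C+\sgl_{\bar C}\), \(\sgl_C-\sgl_{\bar C}\) lie in the four distinct residues. Even granting a gap of exactly \(2\) inside each sheet, a support such as \(\{a,a+2,a+3,a+5\}\) is consistent with all of this. Only an inequality between the sheets excludes it, and in the Lee case you have none.

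In the Bar-Natan case \(z\) genuinely is a chain isomorphism. However, \(z\) and \(z^{-1}=z\) are both filtered of degree \(-1\), so you only get \(|s(zx)-s(x)|\le 1\), hence \(s(\sgl_C)=s(\sgu_C)\pm 1\) by parity. Your asserted \(-1\), and with it the placement of the pairs at \(\{u+1,u-1\}\) and \(\{u,u-2\}\), is unproved. The statement does not need it. Since \(z\) identifies the even and odd \(j\)-summands while moving sorted filtration degrees by at most \(1\), four consecutive values follow once each summand has gap exactly \(2\). But without a mod \(4\) splitting, that gap must come from an actual Turner-type argument, which you do not give.

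Conversely, the step you call the main obstacle is easy in the Lee case. Let \(X_p\) act on the basepoint circle in both sheets. It commutes with \(m'\) and \(\Delta'\), and with \(\eta'\) because \(X_p\) commutes with \(X_\gamma\). It is filtered of degree \(-2\), satisfies \(X_p^2=1\), and sends \(\sgu_C+\sgu_{\bar C}\) to \(\pm(\sgu_C-\sgu_{\bar C})\), and likewise on the \(\ell\)-sheet. Combined with the mod \(4\) splitting, Rasmussen's argument then gives the exact gap in each sheet directly, with no perturbed \(\iota,\epsilon\).

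The paper's own proof only defers to \cite[Section 3.1]{Rasmussen2010} and to the virtual analogues \cite[Lemma 4.2, Proposition 4.4]{Rushworth2017}. As written, your argument does not establish that the four values are consecutive.
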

	Note that the supports of \( \dkh ' (K) \) and \( \dbn ( K ) \) are not known to be equal in general.
	
	\begin{proof}
		As with the proofs of invariance of doubled Lee and Bar-Natan homologies, we can follow the classical verifications, given in \cite[Section 3.1]{Rasmussen2010}, in essentially identical fashion. See also the analogous results in the related setting of virtual knots \cite[Lemma 4.2, Proposition 4.4]{Rushworth2017}.
	\end{proof}

	The quantum grading supports of \( \dkh ' (K) \) and \( \dbn ( K ) \) are therefore each equivalent to an integer (not necessarily even, unlike in the classical case).

	\begin{definition}\label{Def:ras}
		Let \( K \) be a knot and \( R \) a commutative unital ring in which \(2\) is invertible, or \( \mathbb{Z}_2 \). The \emph{doubled Rasmussen invariant (over \(R\)) of \(K\)} is defined
		\begin{equation*}
			ds_R ( K ) \coloneqq \text{max} \lbrace s ( x ) \in H ~|~ x \neq 0 \rbrace - 1
		\end{equation*}
		where \( H = \dbn (K) \) if \( R = \mathbb{Z}_2 \) and \( H = \dkh ' (K) \) otherwise.
	\end{definition}
	In other words, \( ds_R ( K ) = u \) of \Cref{Prop:qsupport}. Note that \Cref{Cor:cinv} implies that the doubled Rasmussen invariants are concordance invariants. In particular, the doubled Rasmussen invariants obstruct sliceness.
	
	\subsubsection*{Comparison with other extensions}
	The classical Rasmussen invariants over \( \mathbb{Q} \) and \( \mathbb{Z}_2 \) are distinct on the knot \( 14n19265 \) \cite[Remark 6.1]{Lipshitz2014}. Including this knot in a \(3\)-ball, which is in turn included in \( \rp^3 \), yields an example of knot on which the doubled Rasmussen invariants over \( \mathbb{Q} \) and \( \mathbb{Z}_2 \) differ, that we now demonstrate.
	
	We say that a knot \( K \) in \( \rp^3 \) is a \emph{local} if it can be contained in a \(3\)-ball. The doubled invariants of local knots are of constrained form.
	
	\begin{proposition}[c.f.\ Proposition 2.5 \cite{Rushworth2017}]\label{Prop:local}
		Let \( D \) be a knot diagram that does not intersect the boundary of the ambient disc\footnote{That is, as a tangle it has only closed components.}. Then
		\begin{equation*}
			\cdkh ( D ) = \ckh ( D ) \oplus \ckh ( D ) \lbrace -1 \rbrace
		\end{equation*}
		where \( \ckh ( D ) \) is the classical Khovanov complex of \( D \), when considered as a diagram in the plane.
	\end{proposition}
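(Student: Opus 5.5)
The plan is to observe that when \(D\) does not meet the boundary of the ambient disc, no smoothing of \(D\) ever involves an arc passing through the identified antipodal points. It follows that every edge of \( \llbracket D \rrbracket \) is decorated by a pair of pants, never by a once-punctured M\"{o}bius band. Indeed, a M\"{o}bius band edge can only arise when the circle being resolved is one-sided in \( \rp^2 \), i.e.\ essential. Every circle of a smoothing of such a \(D\) lies in the interior of \(D^2\), so it is contractible and two-sided. Hence the cube \( \llbracket D \rrbracket \) is precisely the classical cube of resolutions of \(D\) viewed as a planar diagram, with identical circles and merge/split data along each edge. The map \( \eta \) therefore never occurs in the differential of \( \cdkh(D) \).

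Next I will use the fact that \( m \) and \( \Delta \) do not interact with the superscripts \(u\), \(\ell\): by \Cref{Eq:diffcomp} they act identically on both copies. For a smoothing with \(k\) circles, \( A^k = \mathcal{A}^{\otimes k} \oplus \mathcal{A}^{\otimes k}\lbrace -1 \rbrace \). Each matrix entry of the differential preserves the summand it starts in and restricts, on each summand, to the classical Khovanov edge map. Taking the direct sum over all vertices of each height, the submodule spanned by \(u\)-labelled states is therefore a subcomplex, and so is the submodule spanned by \(\ell\)-labelled states. Together they split \( \cdkh(D) \) as a direct sum of complexes.

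It remains to identify each summand with \( \ckh(D) \). The signs are chosen so that every face has an odd number of minus signs, which is the same condition as in the classical case. Since any two such sign assignments give isomorphic complexes, we may use a classical sign convention. The homological grading \( |S| = \#(1\text{-resolutions}) - n_- \) is the classical one. The quantum grading \( j = p + i + wr(D) \) agrees with the classical formula, which uses \( n_+ - 2n_- + i_{\text{height}} + p \) and is equal to \( p + i + wr(D) \) after writing \(i\) for the shifted height. On the \(u\)-summand this gives \( \ckh(D) \) exactly. On the \(\ell\)-summand the \(p\)-grading is shifted by \(-1\), giving \( \ckh(D)\lbrace -1 \rbrace \).

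The only step needing genuine care is the first: confirming that no M\"{o}bius band edges occur. If the diagram avoids the boundary, every smoothing circle bounds a disc in \(D^2 \subset \rp^2\), and a single saddle between contractible circles yields a pair of pants. The rest is bookkeeping of gradings and signs.
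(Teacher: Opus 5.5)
Your overall plan is the paper's: once no \(\eta\) maps occur, the \(u\)- and \(\ell\)-labelled states span complementary subcomplexes, each a copy of \(\ckh(D)\) (the second shifted by \(\lbrace -1 \rbrace\)). Your sign and grading bookkeeping is correct. The problem is the justification for the one step you single out as needing care. The claim that a M\"{o}bius band edge can only arise when the circle being resolved is one-sided is false. In fact it is backwards.

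In \(\rp^2\) a saddle on a one-sided circle always splits it, into one one-sided and one two-sided circle. A saddle taking one circle to one circle must therefore act on a contractible circle. For a concrete example, let \(C\) be the circle of radius \(1/2\) centred in \(D^2\). Attach a band along the arc that runs from the top of \(C\) up to \((0,1)\), re-enters at the antipode \((0,-1)\), and runs up to the bottom of \(C\). The antipodal identification exchanges the two edges of the band (the edge at \(x=-\varepsilon\) re-enters at \(x=+\varepsilon\)). So the top-left of \(C\) is joined to the bottom-right and vice versa. The result is a single contractible circle, and the saddle is a once-punctured M\"{o}bius band. Hence ``every circle of a smoothing is contractible'' does not exclude \(\eta\). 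Your closing sentence, that a saddle between contractible circles yields a pair of pants, is likewise false in \(\rp^2\).

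What actually excludes \(\eta\) is that the saddle bands, as well as the circles, lie in the interior of \(D^2\). Each band is a small neighbourhood of a crossing of \(D\), and all crossings lie in the open disc. So every edge of \(\llbracket D \rrbracket\) is a saddle performed inside a planar region. There, a band attached to a single circle lies on one side of it and splits it, and a band joining two circles merges them. This is your sentence that \(\llbracket D \rrbracket\) is the classical planar cube, but it must be derived from the planarity of the whole configuration (circles together with bands), not from the contractibility of the circles. With that replacement, the rest of your argument goes through and agrees with the paper's one-line proof.
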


	\begin{proof}
		As \( D \) does not intersect the boundary of the ambient disc no \( \eta \) maps occur in \( \cdkh ( D ) \), so that it splits as the required direct sum.
	\end{proof}
	
	A local knot \( K \) in \( \rp^3 \) uniquely determines a knot in \( S^3 \), by including the \(3\)-ball that contains \(K\) in \( S^3 \).
	
	\begin{corollary}\label{Cor:localras}
		Let \( K \) be a local knot. Then the doubled Rasmussen invariant of \( K \) is equal to the classical Rasmussen invariant of the knot in \( S^3 \) determined by \(K\) (over any coefficient ring).
	\end{corollary}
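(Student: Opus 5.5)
The plan is to reduce to \Cref{Prop:local} and its analogues for the perturbed theories. A local knot \(K\) has a diagram \(D\) lying inside a small disc in the interior of the ambient disc, so no arc of \(D\) meets the boundary. In every smoothing of \(D\), each edge of \( \llbracket D \rrbracket \) is then a pair of pants and never a once-punctured M\"{o}bius band: a M\"{o}bius band edge needs a circle passing through the identified boundary. Hence no \( \eta \), \( \eta' \) or \( \eta_2 \) maps occur. The maps \( m', \Delta' \) and \( m_2, \Delta_2 \) do not interact with the \(u\), \(\ell\) superscripts, and every state of \( \cdkh'(D) \) carries a global superscript. So exactly as in \Cref{Prop:local} we get splittings of filtered complexes
\begin{equation*}
\cdkh'(D) = \ckh'(D) \oplus \ckh'(D)\lbrace -1 \rbrace, \qquad \cdbn(D) = \mathrm{CBN}(D) \oplus \mathrm{CBN}(D)\lbrace -1 \rbrace,
\end{equation*}
where \( \ckh'(D) \) and \( \mathrm{CBN}(D) \) denote the classical Lee and Bar-Natan complexes of \(D\) viewed in the plane. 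The quantum grading \( j = p + i + wr(D) \) restricted to the \(u\)-summand is the classical quantum grading. On the \(\ell\)-summand it is the classical grading shifted down by one.

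Next I pass to homology. The splitting respects the filtration, so the induced \(s\)-grading on \( \dkh'(K) \) is the union of the classical \(s\)-grading support of Lee homology, \( \lbrace s_R(K)+1, s_R(K)-1 \rbrace \), and the same set shifted by \(-1\). This gives \( \lbrace s_R+1, s_R, s_R-1, s_R-2 \rbrace \). The same holds over \( \mathbb{Z}_2 \) for \( \dbn(K) \), using the classical Bar-Natan support. Comparing with \Cref{Prop:qsupport} gives \( u = s_R(K) \). By \Cref{Def:ras}, \( ds_R(K) = (u+1) - 1 = s_R(K) \). I take \(s_R\) normalised as the maximum filtration degree minus one, matching \Cref{Def:ras}, which for the classical theory is Rasmussen's convention. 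Finally I note that the classical Rasmussen invariant of the \(S^3\) knot determined by \(K\) is computed from the same planar diagram \(D\), so nothing more is needed.

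The main obstacle is making sure the filtration splits, not merely the underlying chain complex. A direct sum of filtered complexes, each summand graded compatibly, has homology filtration equal to the componentwise one. The point to check is that the filtration grading \(s\) of a class in a direct sum is the minimum over components of the classes, so the maximum of \(s\) over nonzero classes is attained in the \(u\)-summand at \(s_R(K)+1\). A second, smaller point is the convention for \(wr(D)\) and the grading \(p\). These should be checked to agree with the classical normalisation \( j = p + i + n_+ - 2n_- \) up to the paper's conventions, so that the \(u\)-summand really reproduces the classical grading rather than a shift of it. If a constant shift appears, I would absorb it by recomputing on the unknot, where both invariants must vanish.
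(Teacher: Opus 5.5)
Your proposal is correct and is essentially the paper's argument. The corollary is read off from \Cref{Prop:local}, whose splitting persists for \( \cdkh'(D) \) and \( \cdbn(D) \) as filtered complexes because no \( \eta' \) or \( \eta_2 \) maps occur; the \(s\)-support is then \( \lbrace s_R+1, s_R-1 \rbrace \cup \lbrace s_R, s_R-2 \rbrace \), giving \( ds_R(K) = s_R(K) \). Your normalisation worry dissolves: since \( i = r - n_- \) with \(r\) the number of \(1\)-resolutions, the paper's \( j = p + i + wr(D) \) is exactly Khovanov's \( p + r + n_+ - 2n_- \). The formula you wrote, with \(i\) in place of \(r\), is off by \(n_-\). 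So the \(u\)-summand carries the classical grading exactly, and no recalibration via the unknot is needed.
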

	It follows that including \( 14n19265 \) in \( \rp^3 \) yields a knot on which the doubled Rasmussen invariants over \( \mathbb{Q} \) and \( \mathbb{Z}_2 \) differ.
	
	Denote by \( s_{\mathbb{Q}} ( K ) \) and \( s_{\mathbb{Z}_2} ( K ) \) the extensions of the Rasmussen invariant to \( \rp^3 \) defined by Manolescu-Willis and Chen, respectively \cite{Manolescu25,Chen2025}.
	
	For the knot \( 2_1 \) of \Cref{Fig:2} we have \( ds_{\mathbb{Q}} ( 2_1 ) = - 5 \) and \( s_{\mathbb{Q}} ( 2_1 ) = - 2 \), so that the doubled invariant and that of Manolescu-Willis are distinct.
	
	In \Cref{Sec:dbn} it is demonstrated that Chen's spectral sequence and the doubled Bar-Natan spectral sequence are distinct. However, there are no examples known to the author of knots \( J \) such that
	\begin{equation*}
		ds_{\mathbb{Z}_2} ( J ) \neq s_{\mathbb{Z}_2} ( J ) + 3 \left( n^t_+ - n^t_- \right)
	\end{equation*}
	where \(3 \left( n^t_+ - n^t_- \right)\) accounts for the differing grading conventions (see \Cref{re:quantumshift}). In other words, there are no known examples in which the \( E_{\infty} \) pages carry inequivalent information (recall that their rank and homological degree supports are necessarily equivalent, as per \Cref{Cor:rankbn,Cor:homsupportbn}).
	
	However, owing to the distinct degrees of differential terms in the two sequences -- for example, Chen's sequence may contain terms of odd degree while the doubled sequence cannot -- it is reasonable to suspect that the \( E_{\infty} \) pages carry inequivalent information in general.
	
	\begin{question}\label{Q:z2ras}
		Does there exist a knot \(J\) such that
		\begin{equation*}
			ds_{\mathbb{Z}_2} ( J ) \neq s_{\mathbb{Z}_2} ( J ) + 3 \left( n^t_+ - n^t_- \right)?
		\end{equation*}
		
		In particular, does there exist a \(J\) such that one of \( ds_{\mathbb{Z}_2} ( J ) \), \( s_{\mathbb{Z}_2} ( J ) \) obstructs sliceness but the other does not?
	\end{question}

	\subsection{Genus bounds}\label{Sec:genusbounds}
	We conclude by describing the class of cobordisms on which the doubled Rasmussen invariant imposes genus bounds. The classical Rasmussen invariant, and that defined by Manolescu-Willis, impose such bounds on all cobordisms; in the case of Chen's extension the relevant cobordisms are known as twisted orientable \cite[Definition 3.1]{Chen2025}.
	
	Let \(S\) be an elementary cobordism from \( D_0 \) to \( D_1 \). Recall that \( D_0 \) and \(D_1\) agree outside of a disc neighbourhood \( \nu \). Given \(2\)-colourings \( C_0 \), \(C_1\) of \( D_0 \), \(D_1\), respectively, we say that \( C_0  \) may be \emph{propagated through \(S\) (to yield \( C_1 \))} if \( C_0 \) and \(C_1\) agree outside \( \nu \).
	
	It is easy to see that if \( S \) is an elementary cobordism defined by a Reidemeister move, or a birth or death (of an unknotted, unlinked component), then every \( 2 \)-colouring of the initial diagram propagates. If \( S \) is a saddle we claim that a \(2\)-colouring of the initial diagram propagates if and only if the arcs contained in \( \nu \) are of the same colour. It is straightforward to verify this in the case that \( S \) merges two components into one.
	
	If \( S \) splits one component into two observe that the arcs contained in \( \nu \) are of distinct colours if and only if the linking number of the newly created components is odd. It follows that if \( D_0 \) is \(2\)-colourable then \( D_1 \) is \(2\)-colourable if and only if the arcs contained in \( \nu \) are of the same colour (otherwise \( D_1 \) would contain a degenerate component). For further details in the analogous case of virtual links see \cite[Lemma 3.24]{Rushworth2017}.

	\begin{definition}\label{2colcob}
		Let \( S \) be a cobordism and
		\begin{equation*}
			S = S_0 \cup \cdots \cup S_n
		\end{equation*}
		a decomposition of it into elementary cobordisms \( S_i \), from \( D_i \) to \( D_{i+1} \). We say that \( S \) is \emph{\(2\)-colourable} if there exists a \(2\)-colouring, \(C_i\), of \( D_i \) for all \( 0 \leq i \leq n  \), such that \( C_i \) is propagated through \( S_i \) to yield \( C_{i+1} \).
	\end{definition}
	In other words, \( S \) is \(2\)-colourable if there exists a \(2\)-colouring of \( D_0 \) that can be propagated through \( S \) in its entirety.
	
	The proof of \Cref{Lem:nonzero} establishes that concordances between \(2\)-colourable links are \(2\)-colourable. The arguments used there can be adapted in a straightforward way to prove that genus \(0\) cobordisms (between \(2\)-colourable links) are \(2\)-colourable also. This ultimately follows from the fact that their Reeb graphs are trees.
	
	If a non-\(2\)-colourable link appears in a \( S \) then \( S \) is certainly not \(2\)-colourable itself. Further, there are cobordisms in which only \(2\)-colourable links appear, but the cobordism is not \(2\)-colourable itself e.g.\
	\begin{equation}\label{Fig:4}
		\begin{matrix}
			\includegraphics[scale=0.75]{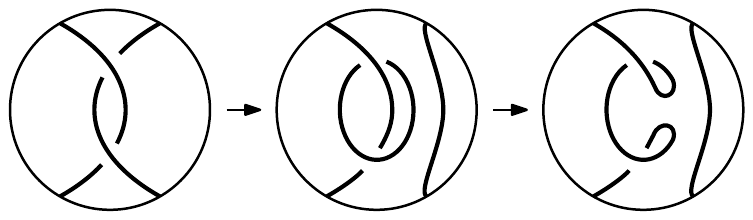}
		\end{matrix}
	\end{equation}
	The leftmost diagram has two \(2\)-colourings, that are easily seen not to propagate to the rightmost diagram.
	
	\begin{theorem}\label{Thm:genusbound}
		Let \( K \) be a knot and \( S \) a \(2\)-colourable cobordism from \(K\) to the (homologically trivial) unknot. Then
		\begin{equation*}
			\left| ds_{R} (K) \right| \leq 2 g ( S ).
		\end{equation*}
	\end{theorem}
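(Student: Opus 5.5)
Following Rasmussen's argument, I would bound $ds_R(K)$ using the map induced by $S$ and by its reverse $\overline{S}$. Both are $2$-colourable, since a propagated $2$-colouring can be read backwards through each elementary piece.

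\textbf{Step 1: $\dkh'(S)$ (resp.\ $\dbn(S)$) is nonzero on canonical generators.} Choose a decomposition $S = S_0 \cup \cdots \cup S_n$ and a $2$-colouring $C_0$ of $D_0$ that propagates through $S$ to colourings $C_i$. I would then show, piece by piece as in the proof of \Cref{Lem:nonzero}, that $\sgu_{C_i}$ is sent to a unit multiple of $\sgu_{C_{i+1}}$.
\begin{itemize}
\item For a Reidemeister move this is \Cref{Thm:cangen} or \Cref{Thm:cangen2}.
\item For births and deaths it follows from the formulas for $\iota$ and $\epsilon$.
\item For a saddle, propagation means precisely that the two arcs in $\nu$ share a colour. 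By the basis formulas in the proof of \Cref{Thm:cangen} (e.g.\ $m'(r\otimes r)=r$, $\Delta'(r)=2r\otimes r$, with analogous formulas for $a,b$), a generator is then sent to a unit multiple of the propagated canonical generator.
\end{itemize}
Hence $\dkh'(S)(\sgu_{C_0}) = \lambda\,\sgu_{C_{n+1}}$ with $\lambda$ a product of powers of $2$ and signs, so $\lambda$ is a unit in $R$. Over $\mathbb{Z}_2$ no factor of $2$ arises; the unit factors come only from $\Delta'$ and $\eta'$. The same argument applies to $\sgl$.

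\textbf{Step 2: filtration estimate.} The induced map is $j$-filtered of degree $\chi(S) = -2g(S)$, since $S$ has two boundary circles and $\chi(S) = -2g(S)$. Therefore
\[
s(\sgu_{C_{n+1}}) \ge s(\sgu_{C_0}) - 2g(S).
\]
For the unknot $U$, \Cref{Cor:localras} together with \Cref{Prop:local} gives the support $\{1,0,-1,-2\}$. I would also pin down the $s$-values of the individual canonical generators, mirroring Rasmussen: $s(\sgu\pm\sgu_{\bar C})$ sit at $u\pm1$, and the $\sgl$ combinations sit at $u$ and $u-2$. Rather than tracking single generators, I would therefore work with the classes realising the maximum. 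Suppose $x = \sgu_C + \sgu_{\bar C}$ (or the appropriate combination) has $s(x) = ds_R(K)+1$. Since $S$ is $2$-colourable it propagates both $C$ and $\bar C$, so the image of $x$ is the corresponding combination for $U$, which is nonzero. Its $s$-value is at most $\max s$ on $\dkh'(U)$, namely $1$. This gives
\[
ds_R(K) + 1 - 2g(S) \le 1, \qquad\text{i.e.}\qquad ds_R(K) \le 2g(S).
\]

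\textbf{Step 3: the reverse inequality.} Applying Step 2 to $\overline{S}$, a $2$-colourable cobordism from $U$ to $K$, and starting from the top class of $U$, gives $1 - 2g(S) \le ds_R(K)+1$, i.e.\ $ds_R(K) \ge -2g(S)$. Combining the two inequalities yields $|ds_R(K)| \le 2g(S)$.

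\textbf{Main obstacle.} The delicate point is the analogue of Rasmussen's lemma that the top filtration class is exactly $\sgu_C+\sgu_{\bar C}$ (resp.\ the $\mathbb{Z}_2$ analogue $\sgu_C$). Equivalently, one needs to know which linear combinations of canonical generators attain $u+1$, and that the cobordism map carries that specific combination to the top combination for $U$, up to a unit. I would prove this following the proof of \Cref{Prop:qsupport}: show that conjugating colours is a filtered involution, and use the $u/\ell$ symmetry coming from $\eta'$ being an isomorphism after inverting $2$, so that the four classes split into $u\pm1$ and the $u$, $u-2$ pair. Then I would check directly that propagation commutes with conjugation.
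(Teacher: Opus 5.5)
Your proposal is correct and follows the paper's route: as in the proof of \Cref{Lem:nonzero}, you show that the maps induced by a \(2\)-colourable cobordism send canonical generators to unit multiples of canonical generators, so they are isomorphisms, and then you run Rasmussen's argument on \(S\) and its reverse. Two small corrections: your ``main obstacle'' is illusory, since Steps 2 and 3 only use that the image of a nonzero class is nonzero, so you never need to know which combination is top (your assignment of \(u\pm 1\) to the \(\sgu\)-combinations is in any case unjustified for non-local knots, because \(\eta'\) interchanges the \(u\) and \(\ell\) copies); and a birth sends a canonical generator to a sum of two canonical generators, not to a single one, with the wrong summand killed at the first saddle joining the new circle to the rest of the connected surface \(S\), as in Rasmussen's bookkeeping.
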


	\begin{proof}
		The proof that \( \dkh ' ( S ) \) and \( \dbn ( S ) \) are nonzero and map canonical generators to (nonzero multiples of) canonical generators is essentially identical to that of \Cref{Lem:nonzero}. With this in hand the argument given by Rasmussen in the classical case \cite[Proof of Theorem 1]{Rasmussen2010} can be applied directly.
	\end{proof}

	\Cref{Fig:4} describes a genus-\(1\) cobordism beginning at the knot \( 2_1 \) given in \Cref{Fig:2}. As noted above, we have \( ds_{\mathbb{Q}} ( 2_1 ) = - 5 \) and \( s_{\mathbb{Q}} ( 2_1 ) = - 2 \). It follows from \cite[Theorem 1.2]{Manolescu25} that the cobordism of \Cref{Fig:4} is genus-minimizing. As \( ds_{\mathbb{Q}} ( 2_1 ) = - 5 \) \Cref{Thm:genusbound} implies that \(2\)-colourable cobordisms are not genus-minimizing. That is, \(2\)-colourable cobordism form a proper subset of the set of all cobordisms.
	
	In \cite[Example 4.11]{Chen2025} it is demonstrated that twisted orientable cobordisms also form such a proper subset. Moreover, it is shown that for all \(N \in \mathbb{N} \) there exists a knot \( K \) such that
	\begin{equation}\label{Eq:arbitrary}
		\text{min} \left( \left\lbrace g ( S ) ~\left| ~ \begin{matrix}
			\text{\(S\) a twisted orientable} \\ \text{cobordism from \(K \) to \(U\)}
		\end{matrix}\right. \right\rbrace \right) \geq \text{min} \left( \left\lbrace g ( S ) ~\left| ~ \begin{matrix}
		\text{\(S\) a cobordism} \\ \text{from \(K \) to \(U\)}
	\end{matrix}\right. \right\rbrace \right) + N
	\end{equation}
	for \( U \) the homologically trivial unknot. That is, the minimum genus of twisted orientable cobordisms is arbitrarily larger than the standard slice genus.
	
	The same is true if twisted orientable is replaced with \(2\)-colourable in \Cref{Eq:arbitrary}. In particular, applying \Cref{Thm:genusbound} to the knots given in \cite[Figure 22]{Chen2025} establishes that the minimal genus of \( 2 \)-colourable cobordisms is arbitrarily larger than the standard slice genus.
	
	However, a general relationship between twisted orientable and \(2\)-colourable cobordisms remains unknown.
	\begin{question}
		\begin{enumerate}
			\item Is the minimal genus of \( 2 \)-colourable cobordisms less than that of twisted orientable cobordisms in general, or vice versa?
			\item For all \( M \in \mathbb{N} \) does there exist a knot \( K \) such that
			\begin{equation*}
				\left| \text{min} \left( \left\lbrace g ( S ) ~\left| ~ \begin{matrix}
					\text{\(S\) a twisted orientable} \\ \text{cobordism from \(K \) to \(U\)}
				\end{matrix}\right. \right\rbrace \right) - \text{min} \left( \left\lbrace g ( S ) ~\left| ~ \begin{matrix}
					\text{\(S\) a \(2\)-colourable} \\ \text{cobordism from \(K \) to \(U\)}
				\end{matrix}\right. \right\rbrace \right) \right| \geq M?
			\end{equation*}
		\end{enumerate}
	\end{question}

	\bibliographystyle{alpha}
	\bibliography{library}

\newcommand{\etalchar}[1]{$^{#1}$}
\begin{thebibliography}{BCGPM16}

\bibitem[Ali19]{Alishahi2019}
A.~Alishahi.
\newblock Unknotting number and {K}hovanov homology.
\newblock {\em Pacific J. Math.}, 301(1):15--29, 2019.

\bibitem[APS04]{APS}
M.~M. Asaeda, J.~H. Przytycki, and A.~S. Sikora.
\newblock Categorification of the {K}auffman bracket skein module of
  {$I$}-bundles over surfaces.
\newblock {\em Algebr. Geom. Topol.}, 4:1177--1210, 2004.

\bibitem[BCGPM16]{Blanchet2016}
C.~Blanchet, F.~Costantino, N.~Geer, and B.~Patureau-Mirand.
\newblock Non-semi-simple {TQFT}s, {R}eidemeister torsion and {K}ashaev's
  invariants.
\newblock {\em Adv. Math.}, 301:1--78, 2016.

\bibitem[BN02]{Bar-Natan2002}
D.~Bar-Natan.
\newblock {On Khovanov's categorification of the Jones polynomial}.
\newblock {\em Algebraic \& Geometric Topology}, 2(1):337--370, 2002.

\bibitem[BN05]{Bar-Natan2005}
D.~Bar-Natan.
\newblock Khovanov's homology for tangles and cobordisms.
\newblock {\em Geom. Topol.}, 9:1443--1499, 2005.

\bibitem[CGL{\etalchar{+}}21]{Caprau2021}
C.~Caprau, N.~Gonz\'{a}lez, C.~R.~S. Lee, A.~M. Lowrance, R.~Sazdanovi\'{c},
  and .~Zhang.
\newblock On {K}hovanov homology and related invariants.
\newblock In {\em Research directions in symplectic and contact geometry and
  topology}, volume~27 of {\em Assoc. Women Math. Ser.}, pages 273--292.
  Springer, Cham, [2021] \copyright 2021.

\bibitem[Che21]{Chen2021}
D.~Chen.
\newblock Khovanov-type homologies of null homologous links in $\mathbb{RP}^3$,
  2021.
\newblock \url{arXiv:2104.04779}.

\bibitem[Che25]{Chen2025}
D.~Chen.
\newblock Bar-{N}atan homology for null homologous links in {$\Bbb{RP}^3$}.
\newblock {\em Pacific J. Math.}, 334(2):233--268, 2025.

\bibitem[CS93]{Carter1993}
J.~S. Carter and M.~Saito.
\newblock Reidemeister moves for surface isotopies and their interpretation as
  moves to movies.
\newblock {\em J. Knot Theory Ramifications}, 2(3):251--284, 1993.

\bibitem[Dro90]{Drob90}
Y.~V. Drobotukhina.
\newblock An analogue of the {J}ones polynomial for links in {${\bf R}{\rm
  P}^3$} and a generalization of the {K}auffman-{M}urasugi theorem.
\newblock {\em Algebra i Analiz}, 2(3):171--191, 1990.

\bibitem[Dro94]{Drobotukhina94}
J.~Drobotukhina.
\newblock Classification of links in {${\bf R}{\rm P}^3$} with at most six
  crossings.
\newblock In {\em Topology of manifolds and varieties}, volume~18 of {\em Adv.
  Soviet Math.}, pages 87--121. Amer. Math. Soc., Providence, RI, 1994.

\bibitem[Gab13]{Gabrovsek2013}
B.~Gabrov\v{s}ek.
\newblock The categorification of the {K}auffman bracket {S}kein module of
  {$\Bbb R\rm P^3$}.
\newblock {\em Bull. Aust. Math. Soc.}, 88(3):407--422, 2013.

\bibitem[HS24]{Hayden2024}
K.~Hayden and I.~Sundberg.
\newblock Khovanov homology and exotic surfaces in the 4-ball.
\newblock {\em J. Reine Angew. Math.}, 809:217--246, 2024.

\bibitem[IMN13]{Ilyutko2013}
D.~P. Ilyutko, V.~O. Manturov, and I.~M. Nikonov.
\newblock Parity in knot theory and graph-links.
\newblock {\em J. Math. Sci. (N. Y.)}, 193(6):809--965, 2013.

\bibitem[Kau04]{Kauffman2004b}
Louis~H Kauffman.
\newblock {A self-linking invariant of virtual knots}.
\newblock {\em Fundamenta Mathematicae}, 184(1):135--158, 2004.

\bibitem[KMN24]{Kauffman2024}
L.~H. Kauffman, R.~Mishra, and V.~Narayanan.
\newblock Knots in $\mathbb{RP}^3$, 2024.
\newblock \url{arXiv:2401.06050}.

\bibitem[Lee05]{Lee2005}
E.~S. Lee.
\newblock {An endomorphism of the Khovanov invariant}.
\newblock {\em Advances in Mathematics}, 197(2):554--586, 2005.

\bibitem[LS14]{Lipshitz2014}
R.~Lipshitz and S.~Sarkar.
\newblock A refinement of {R}asmussen's {$S$}-invariant.
\newblock {\em Duke Math. J.}, 163(5):923--952, 2014.

\bibitem[Man10]{Manturov2010}
V.~O. Manturov.
\newblock Parity in knot theory.
\newblock {\em Sb. Math.}, 201(5):693, 2010.

\bibitem[MW25]{Manolescu25}
C.~Manolescu and M.~Willis.
\newblock A {R}asmussen invariant for links in {$\Bbb{RP}^3$}.
\newblock {\em Trans. Amer. Math. Soc. Ser. B}, 12:789--830, 2025.

\bibitem[Nao06]{Naot2006}
G.~Naot.
\newblock The universal {K}hovanov link homology theory.
\newblock {\em Algebr. Geom. Topol.}, 6:1863--1892, 2006.

\bibitem[Par12]{Pardon2012}
J.~Pardon.
\newblock The link concordance invariant from {L}ee homology.
\newblock {\em Algebr. Geom. Topol.}, 12(2):1081--1098, 2012.

\bibitem[Ras10]{Rasmussen2010}
J.~Rasmussen.
\newblock {Khovanov homology and the slice genus}.
\newblock {\em Inventiones Mathematicae}, 182(2):419--447, 2010.

\bibitem[Ren24]{Ren2024}
Q.~Ren.
\newblock Slice genus bound in {${DTS}^2$} from {$s$}-invariant.
\newblock {\em Algebr. Geom. Topol.}, 24(7):4115--4125, 2024.

\bibitem[Rus18]{Rushworth2017}
W.~Rushworth.
\newblock {Doubled Khovanov Homology}.
\newblock {\em Canadian Journal of Mathematics}, 70:1130--1172, 2018.

\bibitem[Rus21]{Rushworth2021}
W.~Rushworth.
\newblock A parity for 2-colourable links.
\newblock {\em Osaka J. Math.}, 58(4):767--801, 2021.

\bibitem[RY25]{Ren2025}
Q.~Ren and H.~Yang.
\newblock Intrinsic {K}hovanov homology in $\mathbb{RP}^3$, 2025.
\newblock \url{arXiv:2510.00109}.

\bibitem[TT06]{Turaev2006}
Vladimir Turaev and Paul Turner.
\newblock {Unoriented topological quantum field theory and link homology}.
\newblock {\em Algebraic \& Geometric Topology}, 6(3):1069--1093, 2006.

\bibitem[Tur06]{Turner2006}
P.~R. Turner.
\newblock Calculating {B}ar-{N}atan's characteristic two {K}hovanov homology.
\newblock {\em J. Knot Theory Ramifications}, 15(10):1335--1356, 2006.

\bibitem[Tur20]{Turzillo2020}
A.~Turzillo.
\newblock Diagrammatic state sums for 2{D} pin-minus {TQFT}s.
\newblock {\em J. High Energy Phys.}, (3):019, 26, 2020.

\bibitem[Yan25]{Yang2025}
H.~Yang.
\newblock Instantons and {K}hovanov homology in {$\Bbb{RP}^3$}.
\newblock {\em Trans. Amer. Math. Soc.}, 378(7):4991--5009, 2025.

\end{thebibliography}
	
\end{document}